\documentclass[12pt,reqno]{amsart}
\usepackage[margin=1in]{geometry}
\usepackage{amsmath,amssymb,amsthm,graphicx,amsxtra, setspace}
\usepackage[utf8]{inputenc}
\usepackage{mathrsfs}
\usepackage{hyperref}
\usepackage{upgreek}
\usepackage{mathtools}
\usepackage[dvipsnames]{xcolor}
\usepackage[mathcal]{euscript}
\usepackage{amsmath,units}
\usepackage{pgfplots}
\usepackage{tikz}
\usepackage{verbatim}
\allowdisplaybreaks
\usepackage{dsfont}
\usepackage{fontenc}
\usepackage{textcomp}
\usepackage{marvosym}
\usepackage{eurosym}
\usepackage{upgreek}
\usepackage[pagewise]{lineno}
\usepackage{amsmath}
\numberwithin{equation}{section}
\usepackage{amsmath,amssymb,amsthm}
\usepackage{xcolor}
\usepackage{bm}
\DeclareMathAlphabet{\mathpzc}{OT1}{pzc}{m}{it}

\usepackage[cyr]{aeguill}

\allowdisplaybreaks

\colorlet{darkblue}{blue!50!black}

\hypersetup{
	colorlinks,%
	citecolor=blue,%
	filecolor=red,%
	linkcolor=red,%
	urlcolor=blue,%
	pdfnewwindow=true,%
	pdfstartview={FitH}
}

\newtheorem{theorem}{Theorem}[section]
\newtheorem{lemma}[theorem]{Lemma}

\theoremstyle{definition}
\newtheorem{remark}{Remark}[section]

\let\originalleft\left
\let\originalright\right
\renewcommand{\left}{\mathopen{}\mathclose\bgroup\originalleft}
\renewcommand{\right}{\aftergroup\egroup\originalright}

\newcommand{\abs}[1]{\left\vert#1\right\vert}

\newcommand{\Addresses}{{
		\footnote{
			
			\noindent \textsuperscript{1,2}Department of Applied Mathematics and Scientific Computing, Indian Institute of Technology Roorkee-IIT Roorkee,
			Haridwar Highway, Roorkee, Uttarakhand 247667, INDIA.
			
				\textsuperscript{3}Department of Mathematics, Indian Institute of Technology Roorkee-IIT Roorkee,
			Haridwar Highway, Roorkee, Uttarakhand 247667, INDIA.
				\par\nopagebreak
			\noindent  \textit{e-mail:}  \texttt{Om Tripathi: om\_t@amsc.iitr.ac.in.}
			
					\textit{e-mail:} \texttt{Sourav Kumar Sasmal: sourav.sasmal@amsc.iitr.ac.in.}

		\textit{e-mail:}  	\texttt{Manil T. Mohan: maniltmohan@ma.iitr.ac.in, maniltmohan@gmail.com.}

		\noindent \textsuperscript{*}Corresponding author.

}}}

\begin{document}

		\title[Global existence and uniqueness in a density suppressed motility system]{Global existence and uniqueness of solutions in a chemotaxis system with density-suppressed motility, a $\theta-$logistic source, and indirect signal production under a strong Allee effect 
			\Addresses}
		\author[O. Tripathi, S. K.  Sasmal  and M. T. Mohan]{Om Tripathi\textsuperscript{1}, Sourav Kumar Sasmal\textsuperscript{2*},   and Manil T. Mohan\textsuperscript{3}}

			\maketitle
	

	\begin{abstract}
		We consider the following chemotaxis-growth system, featuring density-suppressed motility and a $\theta$-logistic growth term that incorporates a strong Allee effect:
		\begin{equation*}
			\left\{
			\begin{aligned}
				u_t &= \Delta(\gamma(v)u) + \mu u(1-u^\theta)(u-A), & x \in \Omega, \quad t > 0, \\
				v_t &= \Delta v - v+ w^\beta, & x \in \Omega,\quad t > 0, \\
				w_t &= -\delta w +u, & x \in \Omega, \quad t > 0, 
			\end{aligned}
			\right.
		\end{equation*}
	subject to homogeneous Neumann boundary conditions in a bounded domain $\Omega\subset \mathbb{R}^d (d\geq 2)$ with smooth boundary.  Here $\mu\in \mathbb{R} , \delta,\beta>0, \theta\geq1$, and the positive motility function $\gamma(v) \in C^3([0,\infty))$ fulfills $\gamma'(v) \leq 0$ for all $v\geq 0$. The main objective of this paper is to establish the global existence and boundedness of classical solutions to the proposed problem. Our analysis combines the Schauder fixed point theorem for proving local-in-time existence with the extensibility criterion, fundamental energy estimates, $L^p-$bounds, the Gagliardo-Nirenberg interpolation inequality, Young's inequality, semigroup estimates, and a Moser-type iteration scheme to derive uniform-in-time $L^{\infty}-$bounds, thereby ensuring global bounded classical solutions.  More precisely, we establish that the chemotaxis-growth system admits a unique globally bounded classical solution whenever $\beta < \frac{2(\theta+2)}{d}$. Furthermore, if the logarithmic derivative of the motility function is assumed to be uniformly bounded on $[0,\infty)$, that is, $\frac{\gamma'(v)}{\gamma(v)} \in L^{\infty}([0,\infty))$, then the above restriction on $\beta$ can be weakened to $\beta < \frac{d(\theta+1)+2(\theta+2)}{2d}$, provided that $\theta > \frac{4-d}{d-2}$.
		\vspace{0.4cm}
		
	\noindent 	\textbf{Keywords:} Global and local existence, uniqueness, chemotaxis, strong Allee effect, $\theta$-logistic growth, density suppressed motility.
		\vspace{0.2cm}
		
		\noindent 	\textbf{MSC Classification 2020:} 35B45, 35K57, 35Q92, 35A01, 92C17. 
			
	\end{abstract}

	
	\section{Introduction}\setcounter{equation}{0}
    Chemotaxis refers to the directed migration of biological entities toward or away from regions of varying chemical concentration. This phenomenon plays a significant role in various biological processes, including cell migration, aggregation, embryonic development, immune responses, and bacterial pattern formation. To mathematically describe such behavior, Keller and Segel proposed a pioneering class of chemotaxis models in their classical works \cite{KSJTB1,KSJTB2}. Since then, the Keller-Segel model has become one of the most extensively studied systems in mathematical biology and has attracted considerable attention over the past several decades. In a bounded domain $\Omega\subset\mathbb{R}^{d}$ $(d\geq2)$ with smooth boundary $\partial\Omega$, let $u=u(x,t)$ and $v=v(x,t),$ $(x,t)\in\Omega\times[0,T]$ denote the density of the cell population and the concentration of the chemoattractant, respectively. The classical Keller-Segel chemotaxis system with signal production, describing the movement of bacteria toward higher concentrations of self-produced chemical signals, is formulated as follows (\cite{MR4188348}):
     \begin{equation}\label{keller segel}
     	\left\{
     	\begin{aligned}
     		u_t &= \nabla\cdot\bigl(\varphi(u,v)\nabla u
     		-\chi(u,v) u\nabla v\bigr)
     		+f(u,v),\quad &&x\in\Omega,\; t>0,\\
     		v_t &= D\Delta v+g(u,v),\quad &&x\in\Omega,\; t>0,
     	\end{aligned}
     	\right. 
     \end{equation}
 where $D>0$ is a constant. The function $\varphi(u,v)$ denotes the diffusivity of the cells, while $\chi(u,v)$ represents their chemotactic sensitivity. The cross-diffusion term $-\chi(u,v)u\nabla v$ represents the chemotactic migration of bacteria in response to chemical concentration gradients, directing their movement toward regions of higher chemoattractant concentration. The function $f(u,v)$ models the net effect of bacterial proliferation and mortality, while $g(u,v)$ characterizes the generation and consumption of the chemical substance.
     
    A comprehensive treatment of various chemotaxis processes can be found in \cite{chemo1,chemo2,chemo3,chemo4}, and the references therein. Among the various Keller-Segel chemotaxis models, one of the most extensively studied  classical parabolic-parabolic Keller-Segel system is
     \begin{equation}\label{parabolic KS}
     	\begin{cases}
     		u_t=\Delta u-\chi\nabla\cdot(u\nabla v), \quad & x\in\Omega,\; t>0,\\
     		v_t=\Delta v-v+u, \quad & x\in\Omega,\; t>0,
     	\end{cases}
     \end{equation}
    where \(\chi>0\) is a positive fixed constant representing the chemotactic sensitivity. A remarkable feature of this system \eqref{parabolic KS} is that the qualitative behavior of its solutions depends strongly on the spatial dimension. In the one-dimensional setting, Osaki and Yagi \cite{KSstudy1} established the global boundedness of solutions. In two-dimensional domains, Nagai \textit{et al.} \cite{KSstudy2} proved the existence of globally bounded solutions provided that the initial cell mass satisfies $\|u_0\|_{L^1(\Omega)}<\frac{4\pi}{\chi}$. Subsequently, this model has attracted considerable attention, leading to extensive studies on the existence, boundedness, and blow-up behavior of solutions. For comprehensive surveys and further developments, one can refer to Fujie and Senba \cite{DSM4}, Horstmann \cite{hortsman1,Hortsman2}. Moreover, Horstmann and Wang \cite{Horstman3} showed that solutions may blow up in finite time whenever the initial cell mass satisfies $\|u_0\|_{L^1(\Omega)}>\frac{4\pi}{\chi}$. 
     
\par\vspace{0.2em}


Several studies in mathematical biology have investigated chemotaxis models incorporating local sensing mechanisms \cite{PRL2,Science,PRL1}. In this mechanism, cells decrease motility at high concentrations of signaling molecules, which leads to a self-trapping effect that can induce aggregation and spatial patterns. Unlike the system \eqref{parabolic KS}, in the model \eqref{density suppressed motility} given below, the cellular diffusion rate and the chemotactic sensitivity are assumed to be nonlinear functions of the signal concentration in order to incorporate the effect of density-suppressed motility observed in bacterial populations. A typical model exhibiting this behavior is given by 
\begin{equation}\label{density suppressed motility}
	\begin{cases}
\begin{aligned}
u_t&=\nabla\cdot	(\gamma(v)u)+\nabla\cdot(\gamma'(v)u\nabla u),\quad &&x\in\Omega,\; t>0,\\
	v_t&=\Delta v-v+u,\quad &&x\in\Omega,\; t>0,
\end{aligned}
\end{cases}
\end{equation}
where $u(x,t)$ and $v(x,t)$ denote the density of the bacterium \emph{Escherichia coli (E. coli)} and the concentration of the signaling molecule acyl-homoserine lactone (AHL), respectively. Here, $\gamma:[0,\infty)\to(0,\infty)$ denotes the cell motility function, which is assumed to be smooth and nonincreasing with respect to the chemical concentration $v$. This assumption is biologically motivated by the observation that cell motility decreases as the chemical concentration increases.

The density-suppressed motility system has been extensively studied under various structural assumptions on the motility function $\gamma$. In particular, under the assumptions that $\gamma$ is positive and uniformly bounded from above and below, with a bounded derivative, Tao and Winkler \cite{DSM1} established the existence of globally bounded classical solutions in bounded convex domains of two dimensions, as well as global weak solutions in higher-dimensional settings. Subsequently, Xiao and Jiang \cite{MF1} extended these results by relaxing the boundedness requirement on $\gamma'$ and established the global boundedness of classical solutions in bounded domains of arbitrary spatial dimension.

Further progress was made by Fujie and Senba \cite{MF3}, who established the global existence of bounded classical solutions under suitable smoothness and monotonicity assumptions on $\gamma$, together with the condition that $\gamma(s)\to0$ as $s\to\infty$. Specific choices of the motility function have also been the subject of extensive investigation. In particular, for the algebraic motility function $\gamma(s)=cs^{-k}$ with $c,k>0$, the existence of global bounded classical solutions was established in \cite{MF1}, provided that $c$ is sufficiently small. In the prototype power-law case $\gamma(s)=s^{-k}$, Fujie and Senba \cite{MF2} proved the global existence of bounded classical solutions for $k\in\left(0,\frac{2}{d-2}\right), d>2$. Moreover, when the motility function is chosen as $\gamma(s)=e^{-s}$, a critical-mass phenomenon was established in two-dimensional domains \cite{MF3,MF4}.
 Motivated by biological considerations, numerous studies have investigated the system \eqref{density suppressed motility} incorporating a logistic source and nonlinear production. The logistic source is commonly employed to model the proliferation and mortality of bacterial populations. Accordingly, the corresponding model is formulated as follows:
		\begin{equation}\label{non linear v}
			\left\{
			\begin{aligned}
				u_t &= \nabla\cdot	(\gamma(v)u)+\nabla\cdot(\gamma'(v)u\nabla u)+f(u), \quad &&x\in\Omega,\; t>0,\\
				v_t &= \Delta v-v+u^\beta, \quad &&x\in\Omega,\; t>0.
			\end{aligned}
			\right.
		\end{equation}
In the special case $\beta=1$, the system with logistic growth $f(u)=\mu u(1-u)$, $\mu>0$, was studied by Jin \emph{et al.} \cite{SIAM} who proved the existence of globally bounded classical solutions in two-dimensional domains under the assumptions $$\gamma\in C^3([0,\infty)),\  \gamma>0,\  \gamma'\le0,\  \lim_{s\to\infty}\gamma(s)=0\ \text{ and } \lim_{s\to\infty}\frac{\gamma'(s)}{\gamma(s)}\ \text{ exists.}$$ Global classical solutions for dimensions $d\ge 3$ were obtained by Wang and Wang \cite{logistic1} under the boundedness of $\gamma'$ and the sufficient largeness of the parameter $\mu$. Subsequently, Song \emph{et al.} \cite{IMAA24} considered the system \eqref{non linear v} with $\beta=1$ and a logistic growth term involving a strong Allee effect which is modeled as
\begin{equation}\label{Strong Allee}
\left\{
\begin{aligned}
	u_t &= \nabla\cdot	(\gamma(v)u)+\nabla\cdot(\gamma'(v)u\nabla u)+u(1-u)(u-A), \quad &&x\in\Omega,\; t>0,\\
	v_t &= \Delta v-v+u, \quad &&x\in\Omega,\; t>0,
\end{aligned}
\right.
\end{equation}
 where $A\in(0,1)$ denotes the Allee threshold parameter. They proved the global existence and boundedness of classical solutions in spatial dimensions \(d\leq 3\). The strong Allee effect refers to the phenomenon in which a population can persist and grow only if its density exceeds a critical value, known as the Allee threshold. Populations with densities below this threshold experience negative growth and are ultimately driven to extinction, whereas populations with densities above the threshold can survive and may converge to a positive equilibrium. The strong Allee effect occurs when low population density impairs essential biological processes, such as mate finding, cooperative behaviour, and predator avoidance \cite{Allee3,Allee2}.
At bacterial level, to describe the interplay between chemotaxis and strong Allee effects, Mimura and Tsujikawa \cite{Allee5} proposed the following chemotaxis system:
\[
\left\{
\begin{aligned}
	&u_t = \nabla\cdot(d_1\nabla u-\chi u\nabla B)+u(1-u)(u-A),
	\quad &&x\in\Omega,\; t>0,\\
	&(d_1\nabla u-\chi u\nabla B)\cdot\nu  =0,
	\quad &&x\in\partial\Omega,\; t>0,\\
	&u(x,0)=u_0(x)\ge0, \quad &&x\in\Omega,
\end{aligned}
\right.
\]
where \(B\) represents a given environmental stimulus responsible for directed motion, and $\nu$ denotes the outward unit normal vector on $\partial\Omega$. The study established the existence of multiple spike-type localized solutions under strong advection and analyzed their stability by examining the spectrum of the corresponding linearized eigenvalue problem.
\par\vspace{0.2em}


\indent In order to incorporate stronger density-dependent growth effects, several authors have considered chemotaxis systems with generalized logistic source terms in \eqref{non linear v}, where $f(u)=ru-\mu u^{\alpha}$ with $r\in\mathbb{R}$ and $\mu>0$. Such nonlinear growth terms provide a stronger damping effect that suppresses excessive cell aggregation, thereby playing an important role in proving the global existence and uniform boundedness of solutions. Under suitable assumptions on the motility function, $\gamma$, Lv and Wang \cite{GLS1} proved the existence of globally bounded classical solutions for $\alpha>\max\left\{\frac{d+2}{2},\,2\right\}$ for the system \eqref{non linear v}. Subsequently, Tao and Fang \cite{ZAMP2022} investigated the case $\beta>0$ and established the global boundedness of classical solutions whenever $\beta<\frac{2\alpha}{d+2}$. In addition, several other variants of chemotaxis systems with signal-dependent motility, including models incorporating signal absorption, have also been extensively studied \cite{signalabsorption1, signalabsorption2}.

Conventionally, chemotaxis models assume that the chemoattractant is produced directly by the cellular population, as in the systems \eqref{keller segel}-\eqref{non linear v}. However, this assumption is not suitable for certain biological processes in which the chemical signal is generated indirectly through intermediate substances. To capture such mechanisms, a variety of chemotaxis models with indirect signal production have been proposed and extensively investigated. One important application of these models arises in the mathematical description of the aggregation and dispersal of Mountain Pine Beetles (MPB) \cite{pinebeetlepaper,MPB}. A representative model is given by
\begin{equation}\label{MPB}
	\left\{
	\begin{aligned}
		u_t &= \Delta u-\nabla\cdot(u\nabla v)+\mu u-\mu u^\alpha,
		\quad &&x\in\Omega,\; t>0,\\
		v_t &= \Delta v-v+w^{\beta},
		\quad &&x\in\Omega,\; t>0,\\
		w_t &= -\delta w+u,
		\quad &&x\in\Omega,\; t>0,
	\end{aligned}
	\right.
\end{equation}
where $\mu,\delta,\beta>0$ and $\alpha>1$. The variables $u$, $w$, and $v$ denote the densities of flying MPBs, nesting MPBs, and the beetle pheromone, respectively. For the case $\alpha=2$ and $\beta=1$, Hu and Tao \cite{MPB1} established the global existence and boundedness of classical solutions in three-dimensional domains. Subsequently, Li and Tao \cite{MPB2} extended this result to arbitrary spatial dimensions by proving the global existence of bounded classical solutions whenever $\alpha>\frac{d}{2}$. Motivated by these developments, several authors have further investigated the system \eqref{MPB} by incorporating signal-dependent (density-suppressed) motility of the form
\begin{equation}\label{MPB modified}
	\left\{
	\begin{aligned}
		u_t &= \nabla\cdot(\gamma(v)u)-\nabla\cdot(\gamma'(v)u\nabla u)+ru-\mu u^\alpha,
		\quad &&x\in\Omega,\; t>0,\\
		v_t &= \Delta v-v+w^{\beta},
		\quad &&x\in\Omega,\; t>0,\\
		w_t &= -\delta w+u,
		\quad &&x\in\Omega,\; t>0.
	\end{aligned}
	\right.
\end{equation}
Recently, Lv and Wang \cite{JAMAAequvi} studied system \eqref{MPB modified} in the case $\beta=1$ and $\alpha>1$. By imposing suitable additional conditions on the motility function $\gamma$, they proved the global existence of bounded classical solutions whenever $\alpha>\frac{3d}{d+2}$.

	\subsection{Problem formulation and main results}Motivated by the ecological significance of the strong Allee effect in \eqref{Strong Allee}, we consider replacing the generalized logistic source term in the system \eqref{MPB modified} with a $\theta$-logistic growth term incorporating a strong Allee effect \cite{thetalogistic}. This formulation captures both nonlinear density-dependent regulation and the existence of a critical population threshold, leading to more realistic population dynamics and a stronger damping effect against excessive aggregation. Accordingly, we study the following chemotaxis system:
	\begin{equation} \label{Main}
		\left\{
		\begin{aligned}
			&u_t = \Delta(\gamma(v)u) + \mu u(1-u^\theta)(u-A),\quad &&x \in \Omega, t > 0, \\
			&v_t = \Delta v - v+ w^\beta,\quad &&x \in \Omega, t > 0, \\
			&w_t = -\delta w +u,\quad &&x \in \Omega, t > 0, \\
			&\frac{\partial u}{\partial \nu} = \frac{\partial v}{\partial \nu} = 0,\quad &&x \in \partial \Omega, \\
			&u(x, 0) = u_0(x), \quad v(x, 0) = v_0(x), \quad w(x, 0) = w_0(x),\quad &&x \in \Omega,
		\end{aligned}
		\right.
	\end{equation}
	in a bounded domain $\Omega\subset\mathbb{R}^d$ $(d\ge2)$ with smooth boundary $\partial\Omega$, assuming that $\partial\Omega$ is of class $C^{2+\alpha}$ for some $0<\alpha<1$. Throughout this paper, we assume that the parameters \ $\mu\in\mathbb{R}$, $\beta,\delta>0$, $A\in(0,1)$, and $\theta\geq1$. In addition, for the existence of local solutions, the initial data $(u_0,v_0,w_0)$ are assumed to satisfy
	\begin{equation}\label{Initial Cond}
		\left\{
		\begin{aligned}
			&u_0 \in C^0(\overline{\Omega}),\
			v_0,w_0 \in C^1(\overline{\Omega}),\\
			&u_0,\, v_0,\, w_0 \ge 0\ \text{and}\ u_0 \not\equiv 0.
		\end{aligned}
		\right.
	\end{equation}
\begin{remark}
	One may relax the conditions of the initial data in the following way also
	\begin{equation*}
		\left\{
		\begin{aligned}
			&u_0 \in C^0(\overline{\Omega}),\
			v_0,w_0 \in W^{1,\infty}({\Omega}),\\
			&u_0,\, v_0,\, w_0 \ge 0\ \text{and}\ u_0 \not\equiv 0.
		\end{aligned}
		\right.
	\end{equation*}
		By the Sobolev embedding $W^{1,\infty}(\Omega)\hookrightarrow C^{0}(\overline{\Omega})$, the above implies $v_0,\, w_0\in C^{0}(\overline{\Omega})$.
\end{remark}
For the uniqueness of local solutions, we assume that the initial data satisfy the following: 
\begin{equation}\label{initial-data-reg}
	\begin{aligned}
		&(u_0,v_0,w_0)\in\left(C^{2+\alpha}(\overline{\Omega})\right)^3\ \text{ for some }\ 0<\alpha<1, \  u_0,v_0\ge0 \text{  with }\ u_0\not\equiv0 \ \text{ and }  \\
		&	\partial_\nu u_0
		=
		\partial_\nu v_0
		=
		0
		\ \text{on }\partial\Omega.
	\end{aligned} 
\end{equation}
Moreover, we assume that the motility function satisfies
		\begin{equation}\label{Hypothesis H1}
			 \gamma(v) \in C^3([0,\infty)), \ \gamma(v) > 0, \  \gamma'(v) < 0 \ \text{ on } \ [0,\infty), \ \text{and }\  \lim_{v\to\infty} \gamma(v) = 0,  
		\end{equation}
		with an extra assumption that
		\begin{equation}\label{Hypothesis H2}
			\lim_{v\to\infty} \frac{\gamma'(v)}{\gamma(v)} \text{ exists}.
		\end{equation}
The primary aim of this work is to determine suitable conditions on the parameters $\theta$ and $\beta$ that ensure the existence and uniqueness of globally bounded classical solutions to system \eqref{Main}. The principal results of this study are summarized below.
\begin{theorem}\label{global bound_H1}
Let $\Omega \subset \mathbb{R}^d$ $(d\geq2)$ be a bounded domain with smooth boundary. Suppose that the initial data satisfy \eqref{Initial Cond}, with $\mu,\beta,\delta>0$ and $\theta\geq1$. If the motility function satisfies \eqref{Hypothesis H1} and
$
\beta<\frac{2(\theta+2)}{d},
$
then problem \eqref{Main} admits a  globally bounded classical solution $(u,v,w)$. Moreover, there exists a constant $C>0$, independent of $t$, such that
\[
\|u(\cdot,t)\|_{L^\infty(\Omega)}
+\|v(\cdot,t)\|_{W^{1,\infty}(\Omega)}
+\|w(\cdot,t)\|_{L^\infty(\Omega)}
\leq C,
\ \text{for all } t>0.
\]
\end{theorem}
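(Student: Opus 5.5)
The plan is the standard bootstrap for density-suppressed motility systems: local theory, then a priori bounds driven by the superlinear damping $-\mu u^{\theta+2}$. First I obtain a local classical solution on $[0,T_{\max})$ via the Schauder fixed point argument announced in the abstract. It comes with nonnegativity and the extensibility criterion: if $T_{\max}<\infty$, then $\|u\|_{L^\infty}+\|v\|_{W^{1,\infty}}+\|w\|_{L^\infty}\to\infty$. It therefore suffices to prove the $t$-independent bound on $(0,T_{\max})$.

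\textbf{Step 1 (mass and space-time damping).} Integrating the $u$-equation and using the Neumann condition gives
\[
\frac{d}{dt}\int_\Omega u=\mu\int_\Omega u(1-u^\theta)(u-A)\le C-\frac{\mu}{2}\int_\Omega u^{\theta+2}.
\]
This holds because $u(1-u^\theta)(u-A)\le -\tfrac12u^{\theta+2}+C$ for all $u\ge0$. Hence $\int_\Omega u\le C$ and $\int_t^{t+1}\!\int_\Omega u^{\theta+2}\le C$. Writing $w(t)=e^{-\delta t}w_0+\int_0^te^{-\delta(t-s)}u(s)\,ds$ and applying Jensen/Hölder with the weight $e^{-\delta(t-s)}$, I expect
\[
\int_\Omega w^{\theta+2}(t)\le C+C\int_0^te^{-\delta(t-s)}\int_\Omega u^{\theta+2}\,ds\le C
\]
uniformly in $t$. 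The last inequality follows by summing over unit intervals. Consequently $\|w^\beta\|_{L^{(\theta+2)/\beta}}\le C$ uniformly in time.

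\textbf{Step 2 ($L^\infty$ bound on $v$, positivity of the motility).} The hypothesis $\beta<\frac{2(\theta+2)}{d}$ is exactly $\frac{\theta+2}{\beta}>\frac d2$. Smoothing estimates for the Neumann heat semigroup $e^{t(\Delta-1)}$ applied to the variation-of-constants formula for $v$ then yield $\|v(t)\|_{L^\infty}\le K$ uniformly. Since $\gamma$ is decreasing, $\gamma(K)\le\gamma(v)\le\gamma(0)$, so the $u$-equation is uniformly parabolic. In addition, the same semigroup argument should give $\nabla v\in L^r$ uniformly for every $r<\frac{d(\theta+2)}{d\beta-(\theta+2)}$, with no restriction on $r$ if the denominator is $\le0$.

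\textbf{Step 3 ($L^p$ bounds for $u$), the main obstacle.} I test the equation with $u^{p-1}$, using $\Delta(\gamma u)=\nabla\cdot(\gamma\nabla u+\gamma'u\nabla v)$. Young's inequality and $\gamma\ge\gamma(K)$ give
\[
\frac1p\frac{d}{dt}\int u^p+c\int u^{p-2}|\nabla u|^2+\frac\mu2\int u^{p+\theta+1}\le C\int u^p|\nabla v|^2+C,
\]
where $\sup_{[0,K]}|\gamma'|$ is finite. I would try absorbing the right-hand side by the damping:
\[
\int u^p|\nabla v|^2\le\varepsilon\int u^{p+\theta+1}+C_\varepsilon\int|\nabla v|^{2(p+\theta+1)/(\theta+1)}.
\]
The last term I would control by maximal Sobolev regularity for the $v$-equation, which bounds it by a time integral of $\int w^{\beta(p+\theta+1)/(\theta+1)}$. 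That in turn is bounded, via the $w$-representation, by $\int u^{\beta(p+\theta+1)/(\theta+1)}$. This closes if $\beta<\theta+1$; I need to check this is implied by $\beta<\frac{2(\theta+2)}{d}$, and it is for $d\ge2$ when $\theta\ge1$. If the exponent bookkeeping fails, the fallback is to use the Step 2 bound $\nabla v\in L^r$ together with Gagliardo--Nirenberg on $u^{p/2}$, relying on the dissipation $\int|\nabla u^{p/2}|^2$ and the $L^1$ bound. Either route gives $\sup_t\|u\|_{L^p}<\infty$ for all $p<\infty$ after a Gronwall-type ODE argument.

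\textbf{Step 4 (conclusion).} With $u\in L^p$ for large $p$, I obtain $w\in L^p$, then $v\in W^{1,\infty}$ by semigroup estimates. A Moser-type iteration (or a semigroup argument on the uniformly parabolic $u$-equation) then gives $\|u\|_{L^\infty}\le C$. Finally $w\le\|w_0\|_\infty+\delta^{-1}\sup\|u\|_\infty$. The extensibility criterion then yields $T_{\max}=\infty$ and the stated uniform bound.
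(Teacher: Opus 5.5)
Your skeleton is the paper's: mass and space--time bounds, $\sup_t\int_\Omega w^{\theta+2}<\infty$, $\|v\|_{L^\infty}\le K$ with $\nabla v\in L^r$ for some $r>d$, testing with $u^{p-1}$, Moser iteration, and the extensibility criterion. However, the primary route you propose in Step~3 rests on a false check.

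The implication $\beta<\frac{2(\theta+2)}{d}\Rightarrow\beta<\theta+1$ holds for $d\ge3$, because $\frac{2(\theta+2)}{3}\le\theta+1$ iff $\theta\ge1$. For $d=2$, though, the hypothesis only says $\beta<\theta+2$; for instance $\theta=1$, $\beta=\frac52$ is admissible. When $\beta\ge\theta+1$, the term $\int_\Omega u^{\beta(p+\theta+1)/(\theta+1)}$ produced by maximal regularity and the $w$-representation has exponent at least $p+\theta+1$. It then cannot be absorbed by the damping $\int_\Omega u^{p+\theta+1}$, so the route as you describe it does not close in two dimensions.

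You must therefore use your fallback, which is exactly the paper's Lemma~\ref{Lp estimate of u 1}. Step~2 supplies $\|\nabla v\|_{L^r}\le C$ for some $r>d$; this is precisely where $\beta<\frac{2(\theta+2)}{d}$ enters. Hölder and Gagliardo--Nirenberg then give $\int_\Omega u^p|\nabla v|^2\le\|\nabla v\|_{L^r}^2\|u^{p/2}\|_{L^{2r/(r-2)}}^2\le\varepsilon\|\nabla u^{p/2}\|_{L^2}^2+C$. The lower-order norm can be the $L^1$ bound, or $\int_\Omega u^p$ absorbed by the damping as the paper does. The interpolation exponent is below $1$ exactly because $r>d$. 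This works for every $d\ge2$ and makes the maximal-regularity detour unnecessary.

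The other deviations from the paper are harmless:
\begin{itemize}
\item You bound $\int_\Omega w^{\theta+2}$ via the Duhamel formula, Jensen and a sum over unit time intervals. The paper instead tests the $w$-equation with $w^{\theta+1}$ and applies Lemma~\ref{ode comparison}; the two are equivalent.
\item You get $\|v\|_{L^\infty}$ directly from the $L^{(\theta+2)/\beta}\to L^\infty$ smoothing of $e^{t(\Delta-1)}$, whose singularity is integrable near $0$ exactly when $\frac{\theta+2}{\beta}>\frac d2$. The paper (Lemma~\ref{Lq norm of v and grad v}) goes through $\|\nabla v\|_{L^q}$ with $q>d$, the $L^1$ bound on $v$, Gagliardo--Nirenberg and Morrey. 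Your version is shorter and does not need $\int_\Omega v\le C$.
\item Your Step~4 makes explicit the bounds $\|v\|_{W^{1,\infty}}\le C$ (via Lemma~\ref{beta k condition} with $k$ large) and $\|w\|_{L^\infty}\le\|w_0\|_{L^\infty}+\delta^{-1}\sup_t\|u\|_{L^\infty}$. The theorem asserts both, but the paper leaves them implicit after its Moser iteration.
\end{itemize}
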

\begin{remark}
The global boundedness of classical solutions was established for the direct signal production system with the generalized logistic source term $f(u)=ru-\mu u^\alpha$, under the condition $\beta<\frac{2\alpha}{d+2}$ \cite{ZAMP2022}, which was later improved to $\beta<\frac{2\alpha}{d}$ for the corresponding  indirect signal production system \cite{JAMAA25}. In contrast, Theorem \ref{global bound_H1} proves global boundedness for a wider range of $\beta$, highlighting the stronger damping effect of the $\theta$-logistic source term.
\end{remark}	
Under stronger assumptions on the motility function, we obtain the following result.
\begin{theorem}\label{global bound H1-H2}
	Let $\Omega \subset \mathbb{R}^d$ $(d\geq2)$ be a bounded domain with smooth boundary. Suppose that the initial data satisfy \eqref{Initial Cond}, with $\mu,\beta,\delta>0$ and $\theta\geq1$. If the motility function satisfies both \eqref{Hypothesis H1} and \eqref{Hypothesis H2}, then the condition imposed in Theorem \ref{global bound_H1} for the existence of globally bounded classical solutions can be relaxed to
	$
	\beta<\frac{d(\theta+1)+2(\theta+2)}{2d},
	$
	provided that
	$
	\theta>\frac{4-d}{d-2}.
	$
	\end{theorem}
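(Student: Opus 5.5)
Local existence and the extensibility criterion are taken from the paper's local theory (Schauder fixed point). Then it suffices to bound $\|u(\cdot,t)\|_{L^\infty}$ uniformly in time. Once that bound is in hand, $w$ is bounded by the ODE $w_t=-\delta w+u$, and parabolic regularity for $v$ bounds $\|v\|_{W^{1,\infty}}$. The new ingredient relative to Theorem \ref{global bound_H1} is \eqref{Hypothesis H2}. Together with \eqref{Hypothesis H1} it gives $\sup_{s\ge0}|\gamma'(s)|/\gamma(s)=:K<\infty$: on compacts the ratio is continuous, and at infinity it has a finite limit. This will replace the cruder treatment of the cross term used for Theorem \ref{global bound_H1}.

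\textbf{Step 1: basic bounds.} Integrating the $u$-equation and using $\mu u(1-u^\theta)(u-A)\le C-\frac{\mu}{2}u^{\theta+2}$ gives $\int_\Omega u\le C$ and $\int_t^{t+\tau}\int_\Omega u^{\theta+2}\le C$. From the $w$-equation we get $w(t)\le e^{-\delta t}w_0+\int_0^t e^{-\delta(t-s)}u(s)\,ds$, hence $\int_t^{t+\tau}\int_\Omega w^{\theta+2}\le C$. Since $w^\beta$ is then bounded in $L^{(\theta+2)/\beta}$ on time slabs, maximal regularity and semigroup estimates for the $v$-equation give space-time bounds for $v$ and $\nabla v$. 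I will also use the standard comparison for $\Delta(\gamma(v)u)$ with $\gamma'<0$: the auxiliary function $(I-\Delta)^{-1}u$ together with $\gamma(v)\le\gamma(0)$ yields a pointwise upper bound for $v$, hence $\gamma(v)\ge \gamma_*>0$. Here I would follow the Fujie--Senba / Jin et al.\ duality argument, which in this indirect setting is applied to $w$ through the $v$-equation.

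\textbf{Step 2: $L^p$ estimate (main step).} Test the $u$-equation with $u^{p-1}$:
\begin{align*}
\frac1p\frac{d}{dt}\int_\Omega u^p+(p-1)\int_\Omega\gamma(v)u^{p-2}|\nabla u|^2
&=-(p-1)\int_\Omega\gamma'(v)u^{p-1}\nabla u\cdot\nabla v\\
&\quad+\mu\int_\Omega u^{p}(1-u^\theta)(u-A).
\end{align*}
Using $|\gamma'|\le K\gamma$ and Young, the cross term is at most $\frac{p-1}{2}\int_\Omega\gamma u^{p-2}|\nabla u|^2+C_p\int_\Omega\gamma(v)u^p|\nabla v|^2$, and $\gamma(v)\le\gamma(0)$ is bounded. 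The damping contributes $-\frac{\mu}{2}\int_\Omega u^{p+\theta+1}$. By Young, $\int_\Omega u^p|\nabla v|^2\le\varepsilon\int_\Omega u^{p+\theta+1}+C\int_\Omega|\nabla v|^{2(p+\theta+1)/(\theta+1)}$. To handle $|\nabla v|$ I take the combined functional $\int_\Omega u^p+\int_\Omega|\nabla v|^{2q}$ with $q=(p+\theta+1)/(\theta+1)$. The $v$-equation yields dissipation $\int_\Omega|\nabla|\nabla v|^q|^2$ plus a source term $\int_\Omega w^{2\beta}|\nabla v|^{2q-2}$. I would estimate $w$ from its ODE through $\int_\Omega w^{r}\lesssim\sup_s\int_\Omega u^r$ plus decaying terms, or alternatively by adding $\int_\Omega w^{p+\theta+1}$ to the functional, since $\frac{d}{dt}\int_\Omega w^r\le -c\int_\Omega w^r+C\int_\Omega u^r$. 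The source is then bounded by Young and Gagliardo--Nirenberg, where the dissipation $\int_\Omega|\nabla|\nabla v|^q|^2$ gives a gain of dimension-dependent size. Closing requires $w^{2\beta}$ to be controlled by the powers $u^{p+\theta+1}$ and $w^{p+\theta+1}$ after interpolation. Tracking the exponents, the admissibility condition becomes $\beta<\frac{d(\theta+1)+2(\theta+2)}{2d}=\frac{\theta+1}{2}+\frac{\theta+2}{d}$. The $\frac{\theta+2}{d}$ part comes from the space-time $L^{\theta+2}$ bound of Step 1 fed through Gagliardo--Nirenberg. The $\frac{\theta+1}{2}$ part comes from the extra damping $u^{p+\theta+1}$ absorbing $|\nabla v|^2$-weighted terms. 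The GN interpolation needs its exponent strictly below the critical one, and I expect that requirement to translate into $\theta>\frac{4-d}{d-2}$, i.e.\ $(\theta+1)(d-2)>2$. This exponent bookkeeping is the main obstacle, and I would first verify it in $d=3$. The result is $\sup_t\|u\|_{L^p}\le C_p$ for some $p>d/2$ (indeed for all $p$).

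\textbf{Step 3: $L^\infty$.} With $u\in L^\infty_tL^p$ for large $p$, we get $w\in L^\infty_tL^p$, so $w^\beta\in L^\infty_tL^{p/\beta}$. Semigroup estimates then give $\nabla v\in L^\infty$. A Moser--Alikakos iteration on the $u$-equation, now with bounded $\gamma,\gamma'$ and bounded $\nabla v$, yields $\|u\|_{L^\infty}\le C$. The extensibility criterion then gives global existence with the uniform bound stated in Theorem \ref{global bound_H1}.
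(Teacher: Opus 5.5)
Your architecture matches the paper's. You test with $u^{p-1}$ using $\gamma\le\gamma(0)$ and $|\gamma'|\le k_4\gamma$, couple with $\frac1q\int_\Omega|\nabla v|^{2q}$ and an $L^r$-functional of $w$, absorb via Gagliardo--Nirenberg, close with Lemma \ref{ode comparison}, and finish with Moser. But the decisive step is asserted rather than proved, and the input it needs is missing from your Step 1.

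After Young's inequality you must absorb terms $\int_\Omega|\nabla v|^{\chi}$ into $\frac{q-1}{q^2}\int_\Omega|\nabla|\nabla v|^q|^2$. Here $\chi=2q$ for your choice $q=\frac{p+\theta+1}{\theta+1}$, and $\chi=\frac{(2q-2)r}{r-2\beta}$ comes from $w^{2\beta}|\nabla v|^{2q-2}$ when the $w$-functional is $\int_\Omega w^r$. Gagliardo--Nirenberg applied to $|\nabla v|^q$ does this only relative to a lower-order norm bounded \emph{at every time}, $\sup_t\|\nabla v(\cdot,t)\|_{L^s(\Omega)}\le C$, and then exactly when $s<\chi<2q+\frac{2s}{d}$.

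Your Step 1 only yields slab (space--time) bounds for $\nabla v$. These leave a remainder $C\|\nabla v\|_{L^s}^{\kappa}$ with a large power $\kappa$, which you have no way to integrate over slabs. What is needed is $\sup_t\int_\Omega w^{\theta+2}\le C$. Your own Duhamel formula, combined with the slab bound on $\int_t^{t+\tau}\int_\Omega u^{\theta+2}$, already gives it; the paper gets it in Lemma \ref{L1 estimate of w theta} via Lemma \ref{ode comparison}. Lemma \ref{beta k condition} with $k=\theta+2$ then gives $\sup_t\|\nabla v\|_{L^s}\le C$ for every $s<\frac{d(\theta+2)}{\beta d-(\theta+2)}$.

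You then still have to exhibit $(p,q)$ with $s<\chi<2q+\frac{2s}{d}$ for both exponents. For large $p$ the only binding condition is the upper one for the $w$-generated exponent, namely $2\beta<(\theta+1)\bigl(1+\frac sd\bigr)$. Letting $s\uparrow\frac{d(\theta+2)}{\beta d-(\theta+2)}$, this is exactly $\beta<\frac{d(\theta+1)+2(\theta+2)}{2d}$, which is the content of the paper's Lemma \ref{cond on p and q}. Your choice of $q$, and the option of adding a small multiple of $\int_\Omega w^{p+\theta+1}$, are compatible with this and give the same threshold. But this computation \emph{is} the proof of the theorem, and you deferred it.

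Two further corrections. First, the hypothesis $\theta>\frac{4-d}{d-2}$ does not arise from any Gagliardo--Nirenberg criticality, so you will not find it in the bookkeeping. It is equivalent to $d(\theta+1)>2(\theta+2)$, i.e.\ to $\frac{d(\theta+1)+2(\theta+2)}{2d}>\frac{2(\theta+2)}{d}$. It only ensures that the new range genuinely enlarges that of Theorem \ref{global bound_H1}; the $L^p$ estimate itself never uses it.

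Second, the Fujie--Senba-type pointwise bound for $v$ in Step 1 is asserted without proof. It is not a routine adaptation for nonlinear indirect production $w^\beta$, but it is also unnecessary. Step 2 uses only $\gamma\le\gamma(0)$. Once $\sup_t\|u\|_{L^p}\le C$ for some $p>\beta d$, your Step 3 gives $\nabla v\in L^\infty$. Together with $\int_\Omega v\le C$ this bounds $v$, hence $\gamma(v)\ge\gamma_*>0$ before the Moser iteration. The paper instead fixes $P$ so large that the admissible $q$ exceeds $\frac d2$, and uses $\|\nabla v\|_{L^{2q}}\le C$ with $2q>d$.
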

\begin{remark}
	In system \eqref{Main}, the parameter $\theta\ge1$ determines the strength of the density-dependent damping, while $\beta$ represents the production rate of $v$. Theorems \ref{global bound_H1} and \ref{global bound H1-H2} demonstrate that the balance between these two effects is crucial when the damping induced by $\theta$ is sufficiently strong relative to the production governed by $\beta$,  and the solution component $u$ remains uniformly bounded for all time.
	
\end{remark}
\subsection{Difficulties, approaches and novelties}
In Section \ref{sec:preliminaries}, the condition in Lemma \ref{beta k condition} is repeatedly used in the analysis of the \(v\)-equation; see, for instance, \cite{SIAM, JAMAAequvi, ZAMP2022, JAMAA24, JAMAA25}. Nevertheless, the endpoint case \(m=\infty\) is not explicitly covered by the existing estimates. To address this issue, we extend the estimate from \cite[Lemma 1.3]{Neumann}, stated in Lemma \ref{Neumann estimates}(i), to the endpoint \(p=\infty\). Since the original result does not include this endpoint, we provide a separate proof in Lemma \ref{Neumann est infty case}. This extension allows us to rigorously apply the corresponding estimate in the case \(m=\infty\). Consequently, we obtain a constant \(C>0\) such that
$
\|\nabla v\|_{L^{\infty}(\Omega)}\leq C
$
whenever
$
\beta<\frac{k}{d}
\ \text{with}\
\beta\leq k
$ in Lemma \ref{beta k condition}.

To the best of our knowledge, the regularity assumptions imposed on the initial data for establishing the existence and uniqueness of local solutions to \eqref{Main} have not been explicitly characterized in the existing literature \cite{SIAM, JAMAAequvi, IMAA24, JAMAA25} on chemotaxis systems with density-suppressed motility and indirect signal production. The construction of local classical solutions is, however, nontrivial, as the initial data prescribed in \eqref{Initial Cond} do not possess sufficient regularity to permit a direct application of the abstract parabolic theory developed in \cite{Ladyzhynska} within a Schauder fixed-point framework. To address this issue, we approximate the given initial data \eqref{Initial Cond} by a sequence of smoother functions that satisfy the appropriate compatibility conditions such that $\{(u_0^n,v_0^n,w_0^n)\}_{n\in\mathbb{N}}
\in
\bigl(C^{2+\alpha}(\overline{\Omega})\bigr)^3,
\ 0<\alpha<1,\ \text{with}\ \partial_\nu u_0^n
=
\partial_\nu v_0^n
=0
\ \text{on }\partial\Omega$. Using the closed, bounded, and convex set in the underlying Banach space introduced for the fixed-point argument in Section \ref{sec:local}, we establish the necessary parabolic Hölder estimates for the local solutions associated with the regularized initial data. These estimates allow us to construct a sequence of local approximating solutions to \eqref{Main}. We then exploit the compactness of the relevant parabolic Hölder embeddings to extract a subsequence that converges strongly in the required function spaces. This convergence enables us to pass to the limit in the approximating problem and thereby obtain the desired local solution of \eqref{Main}.
 
The construction of local classical solutions to \eqref{Main} requires some additional care, mainly due to the possibility that the motility function \(\gamma\) may vanish, thereby causing degeneracy in the diffusion term of the \(u\)-equation. Consequently, the standard theory for uniformly parabolic equations is not directly applicable. To handle this degeneracy, we introduce an appropriate cut-off and replace \(\gamma\) with a truncated motility function $\widehat{\gamma}$ in \eqref{gamma hat} that remains uniformly positive.

Another difficulty arises in establishing uniqueness of local solutions for
the initial data specified in \eqref{Initial Cond}. The uniqueness argument
requires sufficient regularity of $(u,v,w)$ on
$\Omega\times[0,T_{\max})$, in particular the regularity stated in
\eqref{eqn-regularity} and \eqref{regularity assumptions}. Under the weaker
assumptions imposed on the initial data in \eqref{Initial Cond}, the
regularity, 
$
u\in L^\infty\big([0,T_{\max});W^{1,\infty}(\Omega)\big),
$
is not immediately available. Therefore, we establish uniqueness separately
for initial data satisfying the stronger regularity assumptions in
\eqref{initial-data-reg}, for which the corresponding solution possesses the
regularity required by Theorem \ref{Uniqueness of local classical solutions}.

The sublinear production term \(w^\beta\) in the \(v\)-equation introduces a further complication, making the analysis more involved than in the linear case \(\beta=1\) considered in \cite{SIAM, IMAA24}. When \(0<\beta<1\), the function \(w\mapsto w^\beta\) is not globally Lipschitz continuous, which creates difficulties in verifying the continuity of the fixed-point operator. This issue is addressed through suitable estimates based on the inequality \eqref{ineq power p 1}. For \(\beta\geq1\), on the other hand, the nonlinear term can be controlled more directly by means of the inequality \eqref{ineq power p 2}. In addition, for \(0<\beta<1\), the uniqueness argument requires the strict positivity of \(w_0\) in order to obtain a positive lower bound for \(w_i\), whereas the weaker assumption \(w_0\geq0\) is sufficient when \(\beta\geq1\). These distinctions motivate treating the existence and uniqueness of local classical solutions separately in the two parameter regimes.

A further significant challenge in establishing the global existence of classical solutions to system \eqref{Main} arises from the density-suppressed motility mechanism under the assumption $\gamma'(v)\leq0$. In contrast to many classical chemotaxis and reaction-diffusion models, the motility function $\gamma(v)$ is not assumed to be bounded away from zero, which may cause the diffusion in the $u$-equation to become degenerate. As a result, several standard techniques commonly used to prove the global existence and boundedness of solutions cannot be applied directly and require substantial modification. So, for this purpose, we have established the higher regularity of $v$ in Lemma \ref{Lq norm of v and grad v}, for attaining the lower bound on $\gamma$ under the condition $\beta < \frac{2(\theta+2)}{d}$.
For lower-dimensional cases under assumptions \eqref{Hypothesis H1} and \eqref{Hypothesis H2}, the analysis is comparatively simpler. 

In Section \ref{sec:global1}, for \(d\geq2\), we develop the analysis under the sole structural assumption \eqref{Hypothesis H1} on the motility function. We first establish a uniform-in-time upper bound for \(v\). In particular, Lemma \ref{beta cond for grad v} yields a \(W^{1,q}\)-estimate for \(v\) with \(q>d\), which provides the regularity needed to derive an \(L^p\)-bound for \(u\), with \(p>2\) sufficiently large, as shown in Lemma \ref{Lp estimate of u 1}. Subsequently, by combining the Gagliardo--Nirenberg interpolation inequality with the Moser iteration procedure in Lemma \ref{Linfty bound of u}, we upgrade this \(L^p\)-estimate to a uniform \(L^\infty\)-bound for \(u\). This uniform-in-time bound on \(u\) is a key ingredient in establishing the global existence of classical solutions stated in Theorem \ref{global bound_H1}.

Moreover, when the motility function satisfies the additional assumption \eqref{Hypothesis H2}, the approach developed in Section \ref{sec:global2} avoids the need for higher-order regularity estimates for \(v\). Instead, we establish a coupled differential inequality involving \(\int_{\Omega}u^p\), \(\int_{\Omega}|\nabla v|^{2q}\), and \(\int_{\Omega}w^p\) in Lemma \ref{lemma 4.8}, valid for all \(t\in(0,T_{\max})\). The main challenge is the presence of nonlinear terms of the form \(\int_{\Omega}|\nabla v|^{\chi_i}\), where the exponents \(\chi_i\) depend on \(p\), \(q\), \(\beta\), and \(\theta\). To control these terms, we choose \(p\) and \(q\) appropriately according to the conditions in Lemma \ref{cond on p and q}. The Gagliardo-Nirenberg interpolation inequality then allows us to estimate each \(\int_{\Omega}|\nabla v|^{\chi_i}\) in terms of the quantities appearing in the differential inequality, which ultimately yields the required \(L^p\)-bound for \(u\). Once this estimate is established, the conditions on \(q\) given in Lemma \ref{cond on p and q} ensure that \(q>\frac{d}{2}\) for some $p>p*$ defined in \eqref{p* define}. Consequently, the Sobolev embedding yields the uniform estimate \(\|v\|_{L^{\infty}(\Omega)}\leq C\). The remainder of the argument can then be carried out in a manner analogous to the proof of Theorem \ref{global bound_H1}, ultimately leading to the global boundedness of the classical solutions.

It is worth highlighting that our analysis is performed in the presence of a \(\theta\)-logistic growth term incorporating a strong Allee effect. The resulting nonlinear damping mechanism is instrumental in deriving the higher-order estimates needed for the global existence results. To the best of our knowledge, global solvability for chemotaxis systems that simultaneously involve density-suppressed motility, indirectly produced nonlinear signals, and \(\theta\)-logistic growth with a strong Allee effect has not been studied in the existing literature. Our results therefore extend the current theory by identifying admissible ranges of the parameter \(\beta\) for which the damping generated by the \(\theta\)-logistic term is sufficiently strong to compensate for the destabilizing effects arising from density-suppressed motility and nonlinear signal production.
 \subsection{Organization of the paper}
 The remainder of this paper is organized as follows. Section \ref{sec:preliminaries} presents the basic preliminaries, including the functional spaces, auxiliary lemmas, and technical tools that will be used throughout the paper. In Section \ref{sec:local}, we establish the local existence and uniqueness of classical solutions. Section \ref{sec:global1} is devoted to the global boundedness of classical solutions under assumption \eqref{Hypothesis H1} on the motility function. To avoid the additional restrictions imposed in earlier works \cite{SIAM,JAMAAequvi,IMAA24}, we first derive an appropriate $L^{\theta+2}$ estimate for $w$ in Lemma \ref{L1 estimate of w theta}. This estimate is then combined with Neumann heat semigroup estimates to establish $L^q$-bounds for $\nabla v$ and the uniform-in-time boundedness of $v$ in Lemma \ref{Lq norm of v and grad v}. The resulting regularity of $v$ enables us to derive $L^p$-estimates for $u$ in Lemma \ref{Lp estimate of u 1}. Finally, these estimates are incorporated into the Alikakos--Moser iteration scheme to obtain the uniform-in-time boundedness of $u$, thereby completing the proof of Theorem \ref{global bound_H1}.
 
 In Section \ref{sec:global2}, we investigate the global boundedness of classical solutions under the combined assumptions \eqref{Hypothesis H1} and \eqref{Hypothesis H2}. We begin by establishing a series of a priori estimates for the solution, as presented in Lemmas \ref{estimate 4.1}-\ref{lemma 4.8}. These estimates form the basis for deriving an $L^p$-estimate of $u$ in Lemma \ref{final lemma for theorem 2}, where a suitable admissible range of the parameter $\beta$ is identified through Lemma \ref{cond on p and q}. With these estimates in hand, the proof of Theorem \ref{global bound H1-H2} follows by adapting the arguments developed in Section \ref{sec:global1} for uniform-in-time bound of $u$.

\section{Basic preliminaries}\label{sec:preliminaries}
In this section, we introduce the necessary preliminaries, including the functional spaces and auxiliary results that will be used in the proofs of the main theorems.
	
	\subsection{Important inequalities} The following inequalities will be used frequently in the paper. 
	\begin{lemma}[{\cite[Lemma 3.4]{odelemma}}]\label{ode comparison}
		Let $T>0$ and let $x:[0,T)\to[0,\infty)$ be absolutely continuous and satisfy
		\[
		x'(t)+a\,x(t)\le \rho(t),\ \text{ for a.e.}\ t\in(0,T),
		\]
		for some $a>0$, where $\rho\ge0$ and there exist $\tau>0$ and $b>0$ such that
		\[\int_t^{t+\tau} \rho(s)\,ds \le b,\ \text{ for all}\ t\in[0,T-\tau].
		\]
		Then for every $t\in[0,T)$, we have
		\[
		x(t)\le \max\Bigl\{\,x(0)+b,\ \frac{b}{a\tau}+2b\Bigr\}.
		\]
	\end{lemma}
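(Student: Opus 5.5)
The plan is to use an integrating factor together with a window-by-window bound on the accumulated forcing. Since $x$ is absolutely continuous, $\frac{d}{dt}\bigl(e^{at}x(t)\bigr)\le e^{at}\rho(t)$ for a.e. $t$, so for $0\le s\le t<T$ we obtain
\[
x(t)\le e^{-a(t-s)}x(s)+\int_s^t e^{-a(t-\sigma)}\rho(\sigma)\,d\sigma .
\]
I would apply this in two regimes. If $t\le\tau$, take $s=0$ and use $e^{-a(t-\sigma)}\le1$. Then $\int_0^t\rho\le b$, which follows from the window hypothesis when $T>\tau$; when $T\le\tau$ there is no admissible window, and I would read the hypothesis as covering $[0,T)$, i.e.\ extend $\rho$ by zero. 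This gives $x(t)\le x(0)+b$.

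For $t>\tau$, take $s=0$ again, but split $[0,t]$ into windows measured backwards from $t$: $I_k=[t-(k+1)\tau,\,t-k\tau]\cap[0,t]$ for $k=0,1,\dots,N$, where $N=\lfloor t/\tau\rfloor$. On $I_k$ we have $e^{-a(t-\sigma)}\le e^{-ak\tau}$. Each full window $I_k$ with $k<N$ is of the form $[t',t'+\tau]$ with $t'\in[0,T-\tau]$, so $\int_{I_k}\rho\le b$. The last, partial window $I_N\subset[0,\tau]$ lies inside a full window starting at $0$, so its integral is also at most $b$. Hence
\[
\int_0^t e^{-a(t-\sigma)}\rho(\sigma)\,d\sigma\le b\sum_{k\ge0}e^{-ak\tau}=\frac{b}{1-e^{-a\tau}}\le b\Bigl(1+\frac{1}{a\tau}\Bigr),
\]
using $\frac{1}{1-e^{-y}}\le 1+\frac1y$ for $y>0$, which is equivalent to $e^{y}\ge1+y$. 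Combining, $x(t)\le e^{-at}x(0)+b+\frac{b}{a\tau}$.

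It remains to handle the leftover term $e^{-at}x(0)$ for $t>\tau$. If it is at most $b$, we get $x(t)\le\frac{b}{a\tau}+2b$. Otherwise we get $x(t)\le x(0)+b+\frac{b}{a\tau}$, which is slightly weaker than the claim. A cleaner route is therefore to take $s=t-\tau$ and iterate. Set $M=\sup_{[0,\tau]}x\le x(0)+b$; here the case $t\le\tau$ has already given $x(t)\le x(0)+b$. Then
\[
x(t)\le e^{-a\tau}x(t-\tau)+b .
\]
Iterating $k$ steps back until landing in $[0,\tau]$ gives
\[
x(t)\le e^{-ak\tau}M+\frac{b}{1-e^{-a\tau}}\le \max\Bigl\{M,\ \frac{b}{1-e^{-a\tau}}\Bigr\}.
\]
This uses $\lambda M+(1-\lambda)L\le\max\{M,L\}$ after writing $\frac{b}{1-e^{-a\tau}}=L$, and indeed $e^{-ak\tau}M+L(1-e^{-ak\tau})\le\max\{M,L\}$ whenever $L\ge b\,\frac{1-e^{-ak\tau}}{1-e^{-a\tau}}$, which holds. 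With $M\le x(0)+b$ and $L\le b+\frac{b}{a\tau}\le\frac{b}{a\tau}+2b$, this yields the stated maximum.

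The main care point is the bookkeeping of the windows near $t=0$ and near $T$: the window hypothesis only applies for starting points in $[0,T-\tau]$. It also requires $T>\tau$, with the degenerate case $T\le\tau$ handled as described above. There is no real analytic obstacle.
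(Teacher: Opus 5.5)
Your proof is correct. The paper does not prove this lemma; it only quotes it from the cited reference, so there is no in-text argument to compare against, and yours stands as a self-contained proof.

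The heart of your argument is the one-window variation-of-constants inequality $x(t)\le e^{-a\tau}x(t-\tau)+b$ for $t\in[\tau,T)$. It is valid because the window $[t-\tau,t]$ starts in $[0,T-\tau)$. You iterate it back to the first point $t-k\tau\in(0,\tau]$, where $x\le x(0)+b$ holds. This gives $x(t)\le\max\{x(0)+b,\ b/(1-e^{-a\tau})\}$. That is sharper than the stated bound, since $b/(1-e^{-a\tau})\le b+\frac{b}{a\tau}$.

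The constant $\frac{b}{a\tau}+2b$ in the statement is what a more hands-on threshold argument produces without an integrating factor. If $x$ stays above $\frac{b}{a\tau}+b$ on a whole window, integrating $x'+ax\le\rho$ over it shows the damping outweighs the forcing. A further $b$ then covers the possible rise of $x$ within one window. Your route replaces that case analysis with a geometric series, and in exchange gets the better constant.

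Two small remarks. First, you can simply delete the first attempt with backward windows from $s=0$. It only yields $x(0)+b+\frac{b}{a\tau}$, and the iteration supersedes it.

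Second, your caveat about $\tau$ is justified, but it should be stated as a hypothesis rather than handled by extending $\rho$ by zero. For $T<\tau$ the window condition is vacuous, and the lemma as printed is false: take $x(0)=0$ and $\rho$ a large constant. So one needs $\tau\le T$. Note that for $T=\tau$ the window at $t=0$ is admissible, contrary to what you wrote. This matches how the paper actually uses the lemma, with $\tau<T_{\max}$.
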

	\begin{lemma}[{\cite[Lemma 2.3]{JAMAA24}}]\label{power inequality}
		Let $A>0$ and $B>0$. Then
		\begin{equation}\label{ineq power p 1}
			\left|A^p - B^p\right|
			\le \left(\frac{B}{2}\right)^{p-1} \left|A-B\right|,
			\ \text{for all} \ 0<p<1,
		\end{equation}
		and
		\begin{equation}\label{ineq power p 2}
			\left|A^p - B^p\right|
			\le p(A+B)^{p-1} \left|A-B\right|,
			\ \text{for all}\ p \ge 1.
		\end{equation}
	\end{lemma}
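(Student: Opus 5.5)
The plan is to prove both inequalities by elementary real analysis: the mean value theorem applied to $s\mapsto s^p$ on the interval between $A$ and $B$, plus one subadditivity argument for a single case of \eqref{ineq power p 1}. By symmetry of $|A-B|$ nothing is lost when $A=B$, so we assume $A\neq B$.

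For \eqref{ineq power p 2}, where $p\ge1$, the mean value theorem gives some $\xi$ strictly between $A$ and $B$ with
\[
|A^p-B^p| = p\,\xi^{p-1}|A-B|.
\]
Since $p-1\ge0$, the map $s\mapsto s^{p-1}$ is nondecreasing on $(0,\infty)$. Moreover $0<\xi<\max\{A,B\}\le A+B$, so $\xi^{p-1}\le (A+B)^{p-1}$, which is exactly \eqref{ineq power p 2}.

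For \eqref{ineq power p 1}, where $0<p<1$, the map $s\mapsto s^{p-1}$ is decreasing. We therefore need a \emph{lower} bound on the intermediate point, and we split into three cases.

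\emph{Case $A>B$.} The mean value theorem gives $\xi>B>B/2$. Hence
\[
|A^p-B^p| = p\,\xi^{p-1}|A-B| \le (B/2)^{p-1}|A-B|,
\]
using also $p<1$.

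\emph{Case $B/2\le A<B$.} Here $\xi\in(A,B)$, so again $\xi> B/2$, and the same estimate holds.

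\emph{Case $0<A<B/2$.} This is the only delicate step, since the intermediate point may approach $0$ and the mean value theorem alone is useless. Instead we use the subadditivity of $s\mapsto s^p$ for $0<p<1$, namely $(a+b)^p\le a^p+b^p$. Writing $B=A+(B-A)$, this gives
\[
B^p-A^p\le (B-A)^p=(B-A)\,(B-A)^{p-1}.
\]
In this case $B-A>B/2$, and $p-1<0$, so $(B-A)^{p-1}\le (B/2)^{p-1}$. Therefore $|A^p-B^p|\le (B/2)^{p-1}|A-B|$.

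Combining the three cases yields \eqref{ineq power p 1}. The only real obstacle is the last case, and subadditivity resolves it. The subadditivity itself follows from the concavity of $s^p$, or equivalently from the fact that $t\mapsto (1+t)^p-t^p$ is nonincreasing on $[0,\infty)$ and equals $1$ at $t=0$.
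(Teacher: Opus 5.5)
The paper gives no proof of this lemma; it is quoted directly from \cite{JAMAA24}, so there is no internal argument to compare yours against. Your argument is a correct and complete proof of both inequalities.

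For \eqref{ineq power p 2}, the mean value theorem together with $\xi<\max\{A,B\}<A+B$ is all that is needed.

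For \eqref{ineq power p 1}, your split at $A=B/2$ is exactly what accounts for the constant $B/2$. When $A\ge B/2$ (including the case $A>B$), every intermediate point satisfies $\xi>B/2$, so $p\,\xi^{p-1}\le (B/2)^{p-1}$. When $A<B/2$, subadditivity gives $B^p-A^p\le (B-A)^p=(B-A)(B-A)^{p-1}$, and $B-A>B/2$ finishes the estimate.

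One wording point: you can discard $A=B$ because both sides vanish there, not ``by symmetry.'' In fact \eqref{ineq power p 1} is not symmetric in $A$ and $B$, which is why treating $A>B$ and $A<B$ separately, as you do, is necessary. Since you never invoke symmetry afterwards, this does not affect the proof.
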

	
	\begin{lemma}[{\cite[p.~11]{GNI}}]\label{GNI}
		Let $\Omega \subset \mathbb{R}^{d}$ be a bounded domain with smooth boundary. 
		Assume that the integers $q,r,m$ satisfy
		\[
		1\le q,r\le \infty,
		\ m>0,
		\]
		and let $p\in \mathbb{R}_{+}$ be such that
		\[
		\frac{1}{p}
		=
		\vartheta\left(\frac{1}{q}-\frac{m}{d}\right)
		+
		\frac{1-\vartheta}{r},
		\]
		where $0\le \vartheta \le 1$. Then for each
		\[
		\varphi \in W^{m,q}(\Omega)\cap L^{r}(\Omega),
		\]
		there exist positive constants $C_{1}$ and $C_{2}$ depending solely on $\Omega$, $q$, $r$, and $m$ such that
		\[
		\|\varphi\|_{L^{p}}
		\le
		C_{1}
		\|D^{m}\varphi\|_{L^{q}}^{\vartheta}
		\|\varphi\|_{L^{r}}^{1-\vartheta}
		+
		C_{2}\|\varphi\|_{L^{r}}.
		\]
	\end{lemma}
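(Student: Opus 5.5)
The plan is to reduce the inequality on $\Omega$ to the classical Gagliardo--Nirenberg inequality on the whole space $\mathbb{R}^d$, using an extension operator. The price of the extension is a lower-order term, and that term is exactly what produces the additive contribution $C_2\|\varphi\|_{L^r}$.

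Throughout I exclude the known exceptional case of Nirenberg's theorem: $1<q<\infty$, $m-\frac dq$ a nonnegative integer and $\vartheta=1$. In that case only $\vartheta<1$ is admissible, and I assume the statement is to be read in this standard sense.

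\textbf{Step 1 (extension).} Since $\partial\Omega$ is smooth, I take Stein's universal extension operator $E$. It is linear and bounded simultaneously from $W^{k,s}(\Omega)$ to $W^{k,s}(\mathbb{R}^d)$ for all $0\le k\le m$ and $1\le s\le\infty$. Multiplying by a fixed cutoff $\eta\in C_c^\infty(\mathbb{R}^d)$ with $\eta\equiv1$ near $\overline\Omega$, I may assume $E\varphi$ is supported in a fixed ball. The operator $E$ then satisfies
\[
\|E\varphi\|_{L^r(\mathbb{R}^d)}\le C\|\varphi\|_{L^r(\Omega)}
\quad\text{and}\quad
\|D^m E\varphi\|_{L^q(\mathbb{R}^d)}\le C\|\varphi\|_{W^{m,q}(\Omega)}.
\]

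\textbf{Step 2 (whole-space inequality).} I apply Nirenberg's inequality on $\mathbb{R}^d$ to the compactly supported function $E\varphi$, for which no lower-order term is needed:
\[
\|\varphi\|_{L^p(\Omega)}\le\|E\varphi\|_{L^p(\mathbb{R}^d)}\le C\|D^mE\varphi\|_{L^q}^{\vartheta}\|E\varphi\|_{L^r}^{1-\vartheta}\le C\|\varphi\|_{W^{m,q}(\Omega)}^{\vartheta}\|\varphi\|_{L^r(\Omega)}^{1-\vartheta}.
\]

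\textbf{Step 3 (key estimate).} The main obstacle is to replace the full norm $\|\varphi\|_{W^{m,q}(\Omega)}$ by $\|D^m\varphi\|_{L^q}+\|\varphi\|_{L^r}$. That is, I need
\[
\|\varphi\|_{W^{m,q}(\Omega)}\le C\bigl(\|D^m\varphi\|_{L^q(\Omega)}+\|\varphi\|_{L^1(\Omega)}\bigr),
\]
which suffices because $\|\varphi\|_{L^1}\le C\|\varphi\|_{L^r}$ on a bounded domain. I would argue by contradiction.
\begin{itemize}
\item Suppose there are $\varphi_n$ with $\|\varphi_n\|_{W^{m,q}}=1$ while $\|D^m\varphi_n\|_{L^q}+\|\varphi_n\|_{L^1}\to0$.
\item By the Rellich--Kondrachov theorem, a subsequence converges in $W^{m-1,q}(\Omega)$.
\item Since $D^m\varphi_n\to0$ in $L^q$, the subsequence is Cauchy in $W^{m,q}$, so it converges in $W^{m,q}$ to some $\varphi$ with $\|\varphi\|_{W^{m,q}}=1$.
\item The limit satisfies $D^m\varphi=0$, so $\varphi$ is a polynomial of degree less than $m$ on each component of $\Omega$.
\item It also satisfies $\|\varphi\|_{L^1}=0$, so $\varphi=0$, contradicting $\|\varphi\|_{W^{m,q}}=1$.
\end{itemize}
For $q=\infty$ the same argument works, using compactness of the embedding $W^{m,\infty}\hookrightarrow W^{m-1,\infty}$, which holds via Arzelà--Ascoli on bounded smooth domains.

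\textbf{Step 4 (conclusion).} I insert the key estimate into Step 2 and use the subadditivity $(a+b)^{\vartheta}\le a^{\vartheta}+b^{\vartheta}$ for $\vartheta\in[0,1]$. This gives
\[
\|\varphi\|_{L^p}\le C_1\|D^m\varphi\|_{L^q}^{\vartheta}\|\varphi\|_{L^r}^{1-\vartheta}+C_2\|\varphi\|_{L^r}^{\vartheta}\|\varphi\|_{L^r}^{1-\vartheta},
\]
which is the asserted inequality. The constants depend only on $\Omega,q,r,m$, and on $p$ only through $\vartheta$, which is itself determined by these quantities.
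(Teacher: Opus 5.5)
Your argument is correct. There is nothing in the paper to compare it with step by step, because the paper gives no proof of this lemma; it is simply quoted from the literature (Nirenberg's interpolation inequality). What you supply is the standard derivation of the bounded-domain version from the whole-space one, in three parts:
\begin{itemize}
\item a universal (Stein) extension followed by a cutoff;
\item Nirenberg's inequality on $\mathbb{R}^d$ for compactly supported functions, which needs no lower-order term (compact support also disposes of the decay-at-infinity proviso in Nirenberg's theorem when $r=\infty$);
\item the compactness estimate $\|\varphi\|_{W^{m,q}(\Omega)}\le C(\|D^m\varphi\|_{L^q(\Omega)}+\|\varphi\|_{L^1(\Omega)})$, which absorbs the lower-order derivatives produced by the extension.
\end{itemize}
This makes the domain statement self-contained modulo the whole-space case. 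It also uses the regularity of $\partial\Omega$ only through the extension operator and Rellich--Kondrachov, so Lipschitz boundary would suffice.

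Three minor remarks.
\begin{itemize}
\item Your exclusion of Nirenberg's exceptional case is vacuous here. If $\vartheta=1$ and $m-\frac dq=k\in\mathbb{N}_0$, then $\frac1p=-\frac kd\le 0$, which the hypothesis $p\in\mathbb{R}_+$ already rules out.
\item The bullet about polynomials is superfluous, since $\|\varphi\|_{L^1}=0$ alone gives $\varphi=0$.
\item Your closing sentence about the constants is inaccurate. $\vartheta$ is not determined by $\Omega,q,r,m$, and the constants must depend on $\vartheta$ (equivalently on $p$). For $d=2$, $m=1$, $q=r=2$ one has $p=\frac{2}{1-\vartheta}$, and constants uniform as $\vartheta\to1$ would force $W^{1,2}(\Omega)\hookrightarrow L^\infty(\Omega)$. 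So the phrase ``depending solely on $\Omega$, $q$, $r$, and $m$'' in the statement has to be read as also allowing dependence on $\vartheta$, which is exactly what your proof delivers.
\end{itemize}
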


	\begin{lemma}
		[{\cite[Lemma 4.2]{MizoguchiSouplet2001}}]\label{control boun integral}
		Let $\Omega$ be a bounded domain in $\mathbb{R}^d$ with smooth boundary. If $f \in C^2(\overline{\Omega})$ satisfies
		\[
		\frac{\partial f}{\partial \nu} = 0,
		\]
		then
		\begin{equation}\label{normal derivative bound}
			\frac{\partial |\nabla f|^2}{\partial \nu} \leq c_{\Omega} |\nabla f|^2,
		\end{equation}
		where $c_{\Omega} > 0$ is a constant depending only on the curvature of $\partial \Omega$.
	\end{lemma}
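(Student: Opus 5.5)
The plan is to prove this by a direct computation at a boundary point. It rests on two facts: $\nabla f$ is tangent to $\partial\Omega$, and the normal derivative of $|\nabla f|^2$ can be rewritten, by differentiating the Neumann condition tangentially, as a quadratic form of the second fundamental form of $\partial\Omega$ evaluated at $\nabla f$.

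\emph{Setting up the normal field.} Since $\partial\Omega$ is smooth and compact, the signed distance function $\rho$ to $\partial\Omega$ is smooth in a tubular neighbourhood $U$ of $\partial\Omega$, with $\rho<0$ in $\Omega$. Its gradient $N:=\nabla\rho$ is a smooth unit vector field on $U$ that coincides with the outward normal $\nu$ on $\partial\Omega$. The Jacobian $DN=D^2\rho$ is a smooth symmetric matrix field, bounded on $\overline{U}$. For tangent vectors it agrees with the shape operator of $\partial\Omega$, so
\[
\xi\cdot DN(x)\,\xi = \mathrm{II}_x(\xi,\xi)\qquad\text{for } x\in\partial\Omega,\ \xi\in T_x\partial\Omega .
\]
Here $\mathrm{II}$ is the second fundamental form, whose size is controlled by the principal curvatures.

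\emph{Key computation.} The hypothesis gives $g:=\nabla f\cdot N=0$ on $\partial\Omega$. Because $f\in C^2(\overline\Omega)$, $g$ is $C^1$ up to the boundary. Its derivative in any direction tangent to $\partial\Omega$ therefore vanishes on $\partial\Omega$. Fix $x_0\in\partial\Omega$. The hypothesis $\partial_\nu f=0$ says exactly that $\tau:=\nabla f(x_0)$ is tangent to $\partial\Omega$ at $x_0$, so differentiating $g$ along $\tau$ gives
\[
0=\tau\cdot\nabla g(x_0)=D^2f(x_0)(\tau,\nu)+\nabla f(x_0)\cdot DN(x_0)\,\tau .
\]
Combining this with $\partial_\nu|\nabla f|^2=2\,D^2 f(\nabla f,\nu)$ yields
\[
\frac{\partial|\nabla f|^2}{\partial\nu}(x_0)=-2\,\mathrm{II}_{x_0}\bigl(\nabla f(x_0),\nabla f(x_0)\bigr).
\]

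\emph{Conclusion and main obstacle.} Set $c_\Omega:=2\max\bigl\{\max_{x\in\partial\Omega}\|DN(x)\|,\,1\bigr\}$, which is finite by compactness of $\partial\Omega$ and smoothness of $\rho$. Then $-2\,\mathrm{II}(\xi,\xi)\le c_\Omega|\xi|^2$, which is exactly \eqref{normal derivative bound}; note that for convex $\Omega$ the right-hand side can even be taken nonpositive. The only delicate step is the key computation: the tangential derivative of $g$ must be justified using only $C^2$ regularity of $f$ up to $\partial\Omega$, and the identity $DN|_{T\partial\Omega}=\mathrm{II}$ must be verified. I would handle both in boundary-flattening local coordinates. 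In those coordinates $g$ is a $C^1$ function vanishing on the flat piece $\{y_d=0\}$, so its $y'$-derivatives vanish there, and the identity follows from the standard expansion of $\rho$ near the boundary.
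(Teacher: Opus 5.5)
Your proof is correct. You differentiate the Neumann condition $\nabla f\cdot\nu=0$ tangentially along $\nabla f$ to obtain $\partial_\nu|\nabla f|^2=-2\,\mathrm{II}(\nabla f,\nabla f)$, then bound this by the principal curvatures; this is the standard argument, and the paper gives no proof of its own but simply cites Mizoguchi--Souplet, whose lemma rests on the same computation (your signed-distance field $N$ is just a coordinate-free way of organizing the local boundary calculation).
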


	The next lemma is essentially acquired by Hajaiej et.al \cite{Hajajej}.
	\begin{lemma}[The fractional Gagliardo--Nirenberg inequality]\label{fractional gagliardo}
		Let $j \in \mathbb{N}$, $\varrho \geq 1$ and $q \geq 1$. Assume that $p > 0$ and $a \in (0,1)$ satisfy
		\begin{equation*}
			\frac{1}{2} - \frac{p}{j} = (1-a)\frac{q}{\varrho} + a\left(\frac{1}{2} - \frac{1}{j}\right)
			\quad \text{and} \quad p \leq a.
		\end{equation*}
		Then, there exist $c_0,\ c_0' > 0$ such that for all $f \in W^{1,2}(\Omega) \cap L^{\frac{k}{q}}(\Omega)$,
		\begin{equation}\label{gagliardo ineq}
			\|f\|_{W^{p,2}(\Omega)} 
			\leq c_0 \|\nabla f\|_{L^2(\Omega)}^{a} \|f\|_{L^{\frac{\varrho}{q}}(\Omega)}^{1-a}
			+ c_0' \|f\|_{L^{\frac{\varrho}{q}}(\Omega)}.
		\end{equation}
	\end{lemma}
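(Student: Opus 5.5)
The plan is to reduce to a whole-space statement via an extension operator, prove the homogeneous inequality on $\mathbb{R}^j$ by a Littlewood--Paley frequency splitting, and then return to $\Omega$. Here $j$ is the spatial dimension. I read the space $L^{\frac{k}{q}}$ in the statement as the same space $L^{\frac{\varrho}{q}}$ that appears on the right-hand side of \eqref{gagliardo ineq}, and set $r:=\frac{\varrho}{q}$.

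\emph{Step 1 (reduction to $\mathbb{R}^j$).} Since $\partial\Omega$ is smooth, fix a Stein-type extension operator $E$. It is simultaneously bounded $L^{r}(\Omega)\to L^{r}(\mathbb{R}^j)$ and $W^{1,2}(\Omega)\to W^{1,2}(\mathbb{R}^j)$, and we may arrange that $Ef$ is supported in a fixed ball. Because $\|f\|_{W^{p,2}(\Omega)}\le\|Ef\|_{W^{p,2}(\mathbb{R}^j)}$ and $\|Ef\|_{W^{1,2}}\le C(\|\nabla f\|_{L^2(\Omega)}+\|f\|_{L^2(\Omega)})$, it suffices to bound $\|g\|_{L^2}+\|g\|_{\dot H^p}$ for compactly supported $g=Ef$.

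\emph{Step 2 (homogeneous inequality on $\mathbb{R}^j$).} The key observation is that the exponent relation in the lemma is exactly the scaling identity
\[
p=a\cdot 1+(1-a)(-\sigma),\qquad \sigma:=j\Bigl(\tfrac{q}{\varrho}-\tfrac12\Bigr).
\]
I would split into cases according to the sign of $\sigma$.
\begin{itemize}
\item If $r\le2$ (so $\sigma\ge0$), the dual Sobolev embedding gives $L^{r}\hookrightarrow\dot H^{-\sigma}$. Hölder's inequality in Fourier variables with weights $|\xi|^{2p}=|\xi|^{2a}|\xi|^{-2\sigma(1-a)}$ then yields
\[
\|g\|_{\dot H^p}\le\|g\|_{\dot H^1}^{a}\,\|g\|_{\dot H^{-\sigma}}^{1-a}\le C\|\nabla g\|_{L^2}^{a}\|g\|_{L^{r}}^{1-a}.
\]
\item In general, including $r>2$, I would use a Littlewood--Paley decomposition $g=\sum_k\Delta_k g$. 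Bernstein's inequality gives $\|\Delta_k g\|_{L^2}\le C2^{k\sigma}\|g\|_{L^{r}}$ for $r\le 2$. For $r>2$ one instead uses $\|\Delta_kg\|_{L^2}\le C\|g\|_{L^2}$ and controls the $L^2$ norm by Hölder on the bounded support. On the other side, $\|\Delta_kg\|_{L^2}\le C2^{-k}\|\nabla g\|_{L^2}$. Summing $2^{2kp}\|\Delta_kg\|_{L^2}^2$ over low frequencies with the first bound and high frequencies with the second, and optimizing the cutoff frequency, produces the product $\|\nabla g\|_{L^2}^a\|g\|_{L^r}^{1-a}$.
\end{itemize}
The hypothesis $p\le a$ is used here. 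It guarantees $p\le 1$, so the high-frequency sum converges geometrically, with a borderline case $p=a$ that collapses to $\sigma=0$. It also ensures the low-frequency sum can be closed, possibly after absorbing the zero-frequency block into a lower-order term.

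\emph{Step 3 (lower-order terms).} The inhomogeneous part $\|g\|_{L^2}$ must be bounded by $c_0\|\nabla f\|^a_{L^2}\|f\|^{1-a}_{L^r}+c_0'\|f\|_{L^r}$. When $r\ge 2$ this follows from Hölder on $\Omega$. When $r<2$, I would apply the classical Gagliardo--Nirenberg inequality (Lemma \ref{GNI}) with $m=1$ to $\|f\|_{L^2(\Omega)}$, obtaining an interpolation exponent $\vartheta\le a$, and then apply Young's inequality to pass from exponent $\vartheta$ to exponent $a$ at the cost of a $c_0'\|f\|_{L^r}$ term. The extension's contribution $\|f\|_{L^2(\Omega)}$ inside $\|Ef\|_{W^{1,2}}$ is treated in the same way.

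I expect the main obstacle to be Step 2 in the regime $r>2$, where $\sigma<0$ and the clean Fourier--Hölder interpolation is unavailable. There I would first try to reduce to the already treated case: bound $\|g\|_{L^{2}}$ by $C\|g\|_{L^r}$ using the compact support, interpolate between $L^2$ and $\dot H^1$ with exponent $p\le a$, and then use the slack $a-p\ge0$ together with Young's inequality to match the exponents. Failing that, I would simply cite the general fractional Gagliardo--Nirenberg theorem of Brezis--Mironescu, which is the source result behind \cite{Hajajej}.
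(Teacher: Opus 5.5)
The paper does not prove this lemma; it simply imports it from Hajaiej et al. Your route (extension, Littlewood--Paley splitting, then Lemma \ref{GNI} plus Young for the lower-order terms) is therefore genuinely different. For $1\le r=\varrho/q\le 2$ it is sound:
Bernstein's bound $\|\Delta_k g\|_{L^2}\lesssim 2^{k\sigma}\|g\|_{L^r}$ on the low blocks is summable because $p+\sigma>0$.
The bound $2^{-k}\|\nabla g\|_{L^2}$ on the high blocks is summable because $p<1$.
Optimizing the cutoff produces exactly $a=\frac{p+\sigma}{1+\sigma}$.
In Step 3 the Gagliardo--Nirenberg exponent is $\vartheta=\frac{\sigma}{1+\sigma}<a$, so your Young step is legitimate.
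Keep the Littlewood--Paley bullet rather than the dual-Sobolev one, which already fails at $r=1$ since $L^1\not\hookrightarrow\dot H^{-j/2}$.

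What a self-contained proof should buy over the citation is a clear view of where each hypothesis enters, and here you have misplaced the key one. Your own scaling identity gives $a-p=(1-a)\sigma$. So $p\le a$ is \emph{equivalent} to $\sigma\ge0$, i.e.\ to $\varrho\le 2q$, which is exactly the restriction the paper imposes when it applies the lemma. The regime $r>2$ that you single out as the main obstacle is therefore excluded by hypothesis. Your closing paragraph is also inconsistent, since it invokes the slack $a-p\ge0$ in a regime where actually $p>a$. Drop all $r>2$ branches.

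In fact Step 2 is more machinery than needed. Start from $\|f\|_{W^{p,2}(\Omega)}\le C\|f\|_{W^{1,2}(\Omega)}^{p}\|f\|_{L^2(\Omega)}^{1-p}$ (your Step 1 plus H\"older in Fourier variables) and the Step 3 bound $\|f\|_{L^2}\lesssim\|\nabla f\|_{L^2}^{\vartheta}\|f\|_{L^r}^{1-\vartheta}+\|f\|_{L^r}$. The identity $p+\vartheta(1-p)=a$ then gives the main term directly. The leftover $\|\nabla f\|_{L^2}^{p}\|f\|_{L^r}^{1-p}$ is absorbed by Young precisely because $p\le a$.

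The genuine gap is at the other end of the range. The lemma assumes only $\varrho\ge1$ and $q\ge1$, so $0<r<1$ is allowed, and there your Steps 2 and 3 break down. $L^r$ is only quasi-normed, Lemma \ref{GNI} requires $r\ge1$, and the Bernstein bound is false. For example, take $g_\varepsilon=\varepsilon^{-j}\phi(\cdot/\varepsilon)$ with $\phi\in C_c^\infty$ and $\int\phi=1$. Then $\|g_\varepsilon\|_{L^r}=\varepsilon^{j(1/r-1)}\|\phi\|_{L^r}\to0$, while $\|\Delta_0 g_\varepsilon\|_{L^2}$ stays bounded away from zero.

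You have two ways to close this. One is to restrict the statement to $q\le\varrho\le 2q$, which is all the paper ever uses. The other is to reduce to $r=1$ at the level of the $L^2$ bound:
H\"older gives $\|f\|_{L^1}\le\|f\|_{L^r}^{\alpha}\|f\|_{L^2}^{1-\alpha}$ with $\alpha=\frac{j}{2\sigma}\in(0,1)$.
Insert this into the $r=1$ Gagliardo--Nirenberg bound for $\|f\|_{L^2}$ and absorb $\|f\|_{L^2}$ by Young.
This yields $\|f\|_{L^2}\lesssim\|\nabla f\|_{L^2}^{\vartheta}\|f\|_{L^r}^{1-\vartheta}+\|f\|_{L^r}$ with the same $\vartheta=\frac{\sigma}{1+\sigma}$.
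After that, the interpolation argument above goes through unchanged.
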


	Unless otherwise specified, throughout the paper, we use $C$ to denote a generic positive constant whose value may change from line to line.
	
	\subsection{Functional spaces}\label{subsec:Functional space}
	In this subsection, we introduce some necessary function spaces that will be used throughout the paper.
    For $0<l<1$, let $C^{l,\frac{l}{2}}(\overline{\Omega}\times[0,T])$ denote the parabolic Hölder space (\cite[Section 5.1]{evans}, \cite[Chapter I]{Ladyzhynska}), whose elements are $l$-Hölder continuous with respect to the spatial variables and $\frac{l}{2}$-Hölder continuous with respect to the time variable. Its norm is defined by
	\begin{align}\label{eqn-parabolic-norm}
	\|u\|_{C^{l,\frac{l}{2}}(\overline{\Omega}\times[0,T])}
	=\|u\|_{L^\infty(\Omega\times(0,T))}
	+[u]_{x;l}
	+[u]_{t;\frac{l}{2}},
	\end{align}
	where $[\,\cdot\,]_{x;l}$ and $[\,\cdot\,]_{t;\frac{l}{2}}$ denote the spatial and temporal Hölder seminorms, respectively. For $0<l<1$, we define
	\[
	[u]_{x;l}
	:=
	\sup_{\substack{x,y\in\overline{\Omega},\,x\neq y\\ t\in[0,T]}}
	\frac{|u(x,t)-u(y,t)|}{|x-y|^{l}},
	\]
	and
	\[
	[u]_{t;l/2}
	:=
	\sup_{\substack{x\in\overline{\Omega}\\ s,t\in[0,T],\,s\neq t}}
	\frac{|u(x,t)-u(x,s)|}{|t-s|^{l/2}}.
	\]
	
    The following definitions are adopted from \cite[Chapter IV-V]{Ladyzhynska}. The parabolic Hölder space $C^{2+l,1+\frac{l}{2}}(\overline{\Omega}\times[0,T])$ consists of all functions whose spatial derivatives up to second order and first-order time derivative belong to $C^{l,\frac{l}{2}}(\overline{\Omega}\times[0,T])$. Its norm is defined by
	\[
	\|u\|_{C^{2+l,1+\frac{l}{2}}(\overline{\Omega}\times[0,T])}
	=
	\sum_{2r+|\alpha|\le2}
	\|D_x^\alpha\partial_t^r u\|_{C^{l,\frac{l}{2}}(\overline{\Omega}\times[0,T])}.
	\]
		For $1<p<\infty$, the parabolic Sobolev space $W_p^{2,1}(\Omega\times(0,T))$ is defined by
	\[
	W_p^{2,1}(\Omega\times(0,T))
	=
	\left\{
	u\in L^p(\Omega\times(0,T)):
	D_x^\alpha u,\,
	\partial_t u
	\in L^p(\Omega\times(0,T)),
	\ |\alpha|\le2
	\right\},
	\]
	equipped with the norm
	\[
	\|u\|_{W_p^{2,1}(\Omega\times(0,T))}
	=
	\sum_{|\alpha|\le2}
	\|D_x^\alpha u\|_{L^p(\Omega\times(0,T))}
	+
	\|\partial_tu\|_{L^p(\Omega\times(0,T))}.
	\]

\subsection{Auxiliary results}
In this subsection, we present collection of auxiliary results that will be useful in the proofs of the main results, including estimates associated with the Neumann heat semigroup, compact embedding results, and trace theorems for fractional Sobolev spaces.
\begin{lemma}[{\cite[Lemma 1.3]{Neumann}}] \label{Neumann estimates}
	Let $(e^{t\Delta})_{t\geq0}$ represent the Neumann heat semigroup defined on a bounded domain $\Omega\subset \mathbb{R}^d$ with smooth boundary, and let $\lambda_1>0$ be the first nonzero eigenvalue of the operator $-\Delta$ in $\Omega$ subject to homogeneous Neumann boundary conditions. Then there exist positive constants $C_1$ and $C_2$, depending only on the domain $\Omega$, such that the following estimates are satisfied:
	
	\begin{enumerate}
		\item[(i)] If $2 \leq p <\infty$, then for every $z \in W^{1,p}(\Omega)$ and for all $t>0$,
		\begin{align}\label{p<infty Neumann}
		\|\nabla e^{t\Delta} z\|_{L^p(\Omega)}
		\leq C_1 e^{-\lambda_1 t}\|\nabla z\|_{L^p(\Omega)}.
		\end{align}
		
		\item[(ii)] If $1 \leq q \leq p \leq \infty$, then for every $z \in L^q(\Omega)$ and for all $t>0$,
		\begin{align}\label{p,q infty Neumann}
		\|\nabla e^{t\Delta} z\|_{L^p(\Omega)}
		\leq C_2 \left(1+t^{-\frac{1}{2}-\frac{d}{2}\left(\frac{1}{q}-\frac{1}{p}\right)}\right)
		e^{-\lambda_1 t}\|z\|_{L^q(\Omega)}.
		\end{align}
	\end{enumerate}
\end{lemma}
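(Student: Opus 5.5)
Since this lemma is quoted from \cite[Lemma 1.3]{Neumann}, I would reconstruct the standard argument in two stages. First I would prove the smoothing estimate (ii). Then I would deduce the gradient-stability estimate (i) from a short-time energy argument combined with (ii) for large times. Throughout, let $\bar z:=\frac{1}{|\Omega|}\int_\Omega z$. Because $\nabla e^{t\Delta}\bar z=0$, we may replace $z$ by $z-\bar z$ wherever convenient. On the mean-zero subspace of $L^2(\Omega)$, spectral theory gives $\|e^{t\Delta}(z-\bar z)\|_{L^2}\le e^{-\lambda_1 t}\|z-\bar z\|_{L^2}$, and this is the source of the exponential factor.

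\emph{Proof of (ii).} For $0<t\le 1$ I would use the Gaussian bounds for the Neumann heat kernel on a smooth bounded domain, namely $|\nabla_x G(x,y,t)|\le C t^{-\frac{d+1}{2}}e^{-c|x-y|^2/t}$. The $L^r$ norm in $x$ of this Gaussian scales like $t^{-\frac12-\frac d2(1-\frac1r)}$. Young's convolution inequality with $1+\frac1p=\frac1r+\frac1q$ then gives
\[
\|\nabla e^{t\Delta}z\|_{L^p}\le C t^{-\frac12-\frac d2(\frac1q-\frac1p)}\|z\|_{L^q}.
\]
For $t>1$ I would factor the semigroup as
\[
\nabla e^{t\Delta}(z-\bar z)=\nabla e^{\frac12\Delta}\,e^{(t-1)\Delta}\,e^{\frac12\Delta}(z-\bar z).
\]
The first factor $e^{\frac12\Delta}$ (acting last) maps $L^2\to W^{1,p}$ with a bounded constant, by the small-time estimate. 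The middle factor decays like $e^{-\lambda_1(t-1)}$ in $L^2$. The last factor maps $L^q\to L^2$ boundedly (by the kernel bound when $q<2$, and trivially otherwise). Also $\|z-\bar z\|_{L^q}\le 2\|z\|_{L^q}$. Combining the two time regimes gives (ii), with the factor $(1+t^{-\cdots})e^{-\lambda_1 t}$.

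\emph{Proof of (i).} Fix $2\le p<\infty$ and set $u=e^{t\Delta}z$. I would first treat smooth $z$ satisfying the Neumann condition and extend by density at the end. Differentiating $\int_\Omega|\nabla u|^p$ in time and using the identity $\Delta|\nabla u|^2=2\nabla u\cdot\nabla\Delta u+2|D^2u|^2$, I would integrate by parts to obtain
\[
\frac1p\frac{d}{dt}\int_\Omega|\nabla u|^p+\frac{p-2}{4}\int_\Omega|\nabla u|^{p-4}\bigl|\nabla|\nabla u|^2\bigr|^2+\int_\Omega|\nabla u|^{p-2}|D^2u|^2\le \frac12\int_{\partial\Omega}|\nabla u|^{p-2}\,\partial_\nu|\nabla u|^2 .
\]
By Lemma \ref{control boun integral}, the boundary term is at most $\frac{c_\Omega}{2}\int_{\partial\Omega}|\nabla u|^p$. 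A trace inequality then controls this by $\varepsilon\|\nabla|\nabla u|^{p/2}\|_{L^2}^2+C_\varepsilon\|\nabla u\|_{L^p}^p$. The gradient term is absorbed into the dissipative terms on the left, since $|\nabla|\nabla u|^{p/2}|^2$ is controlled by them. Gronwall's inequality then yields $\|\nabla u(t)\|_{L^p}\le C\|\nabla z\|_{L^p}$ for $t\le1$.

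For $t>1$ I would write $\nabla e^{t\Delta}z=\nabla e^{(t-1)\Delta}e^{\Delta}(z-\bar z)$ and apply (ii) with $q=p$. Using the Poincar\'e inequality $\|z-\bar z\|_{L^p}\le C\|\nabla z\|_{L^p}$, this gives the factor $e^{-\lambda_1 t}$. A density argument then extends the bound to all $z\in W^{1,p}(\Omega)$.

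The main obstacle is the boundary term in (i). Without convexity of $\Omega$, the quantity $\partial_\nu|\nabla u|^2$ has no sign. The plan is to control it through Lemma \ref{control boun integral} together with a trace/interpolation inequality. Making the absorption work uniformly in $p$ requires the dissipative term $\int_\Omega|\nabla|\nabla u|^{p/2}|^2$ produced by the computation above, so that term must be tracked carefully.
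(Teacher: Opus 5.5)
Your reconstruction is correct. The paper, however, gives no proof of this lemma: it quotes it from \cite{Neumann}. The only argument the paper supplies in this direction is Lemma~\ref{Neumann est infty case}, for the endpoint $p=\infty$, and that argument is purely kernel-based. It combines the Gaussian bound $|\nabla_x K_N(t,x,y)|\le Ct^{-(d+1)/2}e^{-c|x-y|^2/t}$ with the cancellation $\int_\Omega \nabla_x K_N(t,x,y)\,dy=0$, which comes from $e^{t\Delta}1=1$. It then writes $\nabla e^{t\Delta}f(x)=\int_\Omega \nabla_x K_N(t,x,y)\,[f(y)-f(x)]\,dy$ and uses the Lipschitz bound $|f(y)-f(x)|\le C\|\nabla f\|_{L^\infty}|x-y|$.

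Your proof of (ii) uses the same kernel input plus a spectral-gap factorization through mean-zero $L^2$, which is the standard argument. Your proof of (i) is genuinely different. It is an energy estimate for $\int_\Omega|\nabla u|^p$ in which the unsigned boundary term is controlled by Lemma~\ref{control boun integral} and the trace inequality $\|f\|_{L^2(\partial\Omega)}^2\le \varepsilon\|\nabla f\|_{L^2}^2+C_\varepsilon\|f\|_{L^2}^2$ with $f=|\nabla u|^{p/2}$. The absorption works: your dissipative terms dominate $\frac{4(p-1)}{p^2}\int_\Omega|\nabla|\nabla u|^{p/2}|^2$, using $|\nabla|\nabla u||\le|D^2u|$ when $p=2$. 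So no convexity of $\Omega$ is needed.

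The two routes trade off as follows:
\begin{itemize}
\item Your method works directly in $W^{1,p}$ for finite $p$. The kernel argument of Lemma~\ref{Neumann est infty case} needs a pointwise Lipschitz bound, which has no direct $L^p$ counterpart without maximal-function estimates.
\item Conversely, your absorption forces $\varepsilon\sim 1/p$, hence $C_\varepsilon\sim p$, and Gronwall then gives a short-time constant of order $e^{Cp}$.
\end{itemize}
That constant is harmless for each fixed $p$, since the lemma's constants may depend on $p$, as the paper's own remark about the endpoint confirms. But it is not uniform in $p$, contrary to what your closing paragraph suggests, and it cannot reach $p=\infty$. That is exactly why the paper treats the endpoint separately.

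There is one small slip. In the large-time step of (i), applying (ii) to the outer factor of $\nabla e^{(t-1)\Delta}e^{\Delta}(z-\bar z)$ produces the factor $1+(t-1)^{-1/2}$, which is singular as $t\downarrow 1$. Instead, apply (ii) with $q=p$ directly at time $t\ge 1$ to $z-\bar z$. There $1+t^{-1/2}\le 2$, and Poincar\'e--Wirtinger finishes the step.

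Your density step is also fine as planned. Functions in $C^{2+\alpha}(\overline\Omega)$ with $\partial_\nu z=0$ are dense in $W^{1,p}(\Omega)$: a boundary-layer correction of width $\epsilon$ costs $O(\epsilon^{1/p})$. For such $z$, Schauder theory gives continuity of $\nabla u$ up to $t=0$. For fixed $t>0$, $\nabla e^{t\Delta}z_k\to\nabla e^{t\Delta}z$ in $L^p$ by (ii), so the bound passes to the limit.
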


In \cite[Lemma~1.3]{Neumann}, the first estimate in Lemma~\ref{Neumann estimates} is established only for $p\in[2,\infty)$. The endpoint case $p=\infty$ is not considered there. Since this case plays a crucial role in our analysis, we provide a detailed proof.
\begin{lemma}\label{Neumann est infty case}
	Under the assumptions of Lemma~\ref{Neumann estimates}, for every
	\(z \in W^{1,\infty}\) and all \(t>0\), there exists a constant
	\(C>0\), depending only on the domain \(\Omega\), such that the following estimate holds:
	\begin{align}\label{estimate p infty}
		\|\nabla e^{t\Delta} z\|_{L^\infty(\Omega)}
		\leq C_{\Omega}\|\nabla z\|_{L^\infty(\Omega)}.
	\end{align}
\end{lemma}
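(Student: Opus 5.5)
The estimate \eqref{estimate p infty} cannot be obtained by letting $p\to\infty$ in \eqref{p<infty Neumann}, because the constant $C_1$ in \cite[Lemma 1.3]{Neumann} may depend on $p$. I would therefore prove it directly, treating large and small times separately. Throughout, write $u(\cdot,t):=e^{t\Delta}z$ and $\bar z:=\frac{1}{|\Omega|}\int_\Omega z$.

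\emph{Large times, $t\ge1$.} Since $e^{t\Delta}$ preserves constants and $\nabla \bar z=0$, we have $\nabla e^{t\Delta}z=\nabla e^{t\Delta}(z-\bar z)$. Applying Lemma~\ref{Neumann estimates}(ii) with $q=p=\infty$ gives
\[
\|\nabla e^{t\Delta}z\|_{L^\infty(\Omega)}\le 2C_2e^{-\lambda_1 t}\|z-\bar z\|_{L^\infty(\Omega)},\qquad t\ge1 .
\]
Fix some $p_0>d$. Since $\Omega$ is a bounded smooth (hence connected Lipschitz) domain, Morrey's embedding together with the Poincaré--Wirtinger inequality yields
\[
\|z-\bar z\|_{L^\infty(\Omega)}\le C\|\nabla z\|_{L^{p_0}(\Omega)}\le C|\Omega|^{1/p_0}\|\nabla z\|_{L^\infty(\Omega)} .
\]
This proves \eqref{estimate p infty} for $t\ge1$.

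\emph{Short times, $0<t\le1$.} Here I would use a weighted Bernstein-type maximum principle. First assume $z$ is smooth with $\partial_\nu z=0$, so that $u$ is classical up to $t=0$. Set $\varphi:=|\nabla u|^2$. Bochner's identity gives
\[
\varphi_t=\Delta\varphi-2|D^2u|^2\le\Delta\varphi ,
\]
and Lemma~\ref{control boun integral} gives $\partial_\nu\varphi\le c_\Omega\varphi$ on $\partial\Omega$. To absorb the bad boundary sign, I would introduce a weight $h\in C^2(\overline\Omega)$ with $1\le h\le e^{c_\Omega\rho_0}$ and $\partial_\nu h=-c_\Omega h$ on $\partial\Omega$. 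A concrete choice is $h=e^{c_\Omega\,\rho(x)}$, where $\rho$ equals the distance to $\partial\Omega$ in a tubular neighbourhood (so $\partial_\nu\rho=-1$) and is smoothly cut off to the constant $\rho_0$ inside. Then $\psi:=h\varphi$ satisfies $\partial_\nu\psi\le0$ on $\partial\Omega$ and
\[
\psi_t\le\Delta\psi-2\frac{\nabla h}{h}\cdot\nabla\psi+K\psi
\qquad\text{with } K=K(h)=K(\Omega).
\]
The parabolic maximum principle applied to $e^{-Kt}\psi$ yields $\|\psi(t)\|_{L^\infty}\le e^{K}\|\psi(0)\|_{L^\infty}$ for $t\le1$. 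Hence
\[
\|\nabla u(t)\|_{L^\infty(\Omega)}^2\le e^{K+c_\Omega\rho_0}\|\nabla z\|^2_{L^\infty(\Omega)} .
\]
Combining this with the large-time case gives \eqref{estimate p infty} for smooth Neumann data.

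\emph{Approximation.} For a general $z\in W^{1,\infty}(\Omega)$, I would apply the smooth case on $[s,t]$ with initial datum $u(\cdot,s)$, which is smooth and satisfies the Neumann condition for $s>0$. This gives $\|\nabla u(t)\|_\infty\le C\|\nabla u(s)\|_\infty$, so it remains to control $\limsup_{s\to0}\|\nabla u(s)\|_\infty$ by $C\|\nabla z\|_\infty$. To do so, I would approximate $z$ by $z_n\in C^2(\overline\Omega)$ with $\partial_\nu z_n=0$, $\|\nabla z_n\|_\infty\le C\|\nabla z\|_\infty$ and $z_n\to z$ in $L^{p}$. These can be obtained by reflecting across $\partial\Omega$ in boundary charts, mollifying, and correcting the normal derivative. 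For each fixed $t>0$, $\nabla e^{t\Delta}z_n\to\nabla e^{t\Delta}z$ in $L^p$ by Lemma~\ref{Neumann estimates}(ii). By lower semicontinuity of the $L^\infty$ norm under $L^p$ convergence, the bound for the $z_n$ passes to $z$.

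\emph{Main obstacle.} I expect the hardest step to be the construction of the weight $h$ and the bookkeeping of the interior zero-order term $K$. The key point is that $K$ depends only on $\|h\|_{C^2}$ and $\inf h$, hence only on the geometry of $\Omega$; this is what makes the constant $C_\Omega$ independent of $z$ and $t$. The approximation of $z$ by compatible smooth data with uniformly controlled gradients is routine but must be stated carefully.
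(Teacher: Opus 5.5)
Your proof is correct, but it takes a genuinely different route from the paper.

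\textbf{The paper's route.} The paper works directly with the Neumann heat kernel. From $e^{t\Delta}1=1$ it gets $\int_\Omega\nabla_xK_N(t,x,y)\,dy=0$, and hence
\[
\nabla e^{t\Delta}z(x)=\int_\Omega\nabla_xK_N(t,x,y)\,\bigl(z(y)-z(x)\bigr)\,dy .
\]
It then combines the Gaussian gradient bound $|\nabla_xK_N|\le Ct^{-(d+1)/2}e^{-c|x-y|^2/t}$, quoted from the Green's matrix literature, with the global Lipschitz bound $|z(y)-z(x)|\le C_\Omega\|\nabla z\|_{L^\infty}|x-y|$. The substitution $y=x+\sqrt{t}\,\zeta$ makes all powers of $t$ cancel. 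This argument is short and handles arbitrary $z\in W^{1,\infty}(\Omega)$ with no approximation or compatibility condition. However, it rests on an external pointwise kernel estimate which, in the form the paper invokes it, is stated for $0\le\tau<t\le T$ with constants that may depend on $T$. It also produces no decay in $t$.

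\textbf{What your route gains.} Your time splitting addresses both of these points.
\begin{itemize}
\item For $t\ge1$ you use only Lemma~\ref{Neumann estimates}(ii) with $q=p=\infty$ applied to $z-\bar z$, plus Poincar\'e--Wirtinger and Morrey. This gives a constant uniform in $t$ together with the factor $e^{-\lambda_1 t}$.
\item For $t\le1$ your weighted Bernstein argument needs only Lemma~\ref{control boun integral}, already in the paper, and the classical parabolic maximum principle. The weight $h=e^{c_\Omega\rho}$ turns $\partial_\nu|\nabla u|^2\le c_\Omega|\nabla u|^2$ into $\partial_\nu\psi\le0$. The price is a bounded zero-order coefficient, namely $c_\Omega^2|\nabla\rho|^2-c_\Omega\Delta\rho$, which depends only on $\Omega$.
\end{itemize}
As a result, you actually obtain $\|\nabla e^{t\Delta}z\|_{L^\infty}\le Ce^{-\lambda_1 t}\|\nabla z\|_{L^\infty}$ for all $t>0$. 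This is the full $p=\infty$ analogue of \eqref{p<infty Neumann}, including the decay the paper explicitly gives up.

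\textbf{What your route costs.} You have more regularity bookkeeping. You need $u(\cdot,t)\in C^3(\overline{\Omega})$ for $t>0$, which requires a smooth boundary. You also need to approximate $W^{1,\infty}$ data by $C^2$ functions with $\partial_\nu z_n=0$ and uniformly controlled gradients; the kernel argument avoids both.

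\textbf{Minor simplification.} Once you have that approximation, your preliminary step of restarting from time $s>0$ is redundant. Applying the smooth-case bound to $z_n$ and passing to the limit via Lemma~\ref{Neumann estimates}(ii) and lower semicontinuity of the $L^\infty$ norm already suffices.
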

\begin{proof}
We assume that $\Omega$ is uniformly smooth with $C^{2+\alpha},$  $0<\alpha<1$ boundary. Let
$	S(t)=e^{t\Delta_N}.$ By definition, if
$
u(t,x)=S(t)f(x),
$
then $u$ solves
\begin{align}\label{eqn-neumann}
\begin{cases}
	u_t=\Delta u,
	& x\in\Omega,\quad t>0,\\[1mm]
	\partial_\nu u=0,
	& x\in\partial\Omega,\quad t>0,\\[1mm]
	u(0,x)=f(x),
	& x\in\Omega.
\end{cases}
\end{align}
Now choose the initial datum
$
f(x)\equiv 1.
$
The constant function
$
u(t,x)\equiv 1
$
solves the problem since 
$
u_t=0,\
\Delta u=0, \
\partial_\nu u=0,
$
and
$
u(0,x)=1.
$
Hence, by uniqueness,
\begin{equation}\label{eqn-1.10}
	S(t)1=1.
\end{equation} 
Let  $G(\cdot,\cdot,\cdot,\cdot)$ be the Green's matrix defined for $0\leq \tau<t\leq T$ and $x,\xi\in\Omega$ such that given any initial state $f$, the corresponding solution of problem  \eqref{eqn-neumann} is given by 
\begin{align}\label{eqn-sol}
	u(t,x)=S(t)f(x)=\int_{\Omega}G(t,x;0,\xi)f(\xi)d\xi. 
\end{align}
 Then, we can apply \cite[Theorem 2.2, (eq. no. (2.21))]{XMG}   with $\mu=0$, $|\nu|=1$ to find 
\begin{align}\label{eqn-1.8}
	|\nabla G(t,x;\tau,\xi)|\leq C(t-\tau)^{-(d+1)/2}\exp\left(-c\frac{|x-\xi|^2}{t-\tau}\right),
\end{align}
for some positive constants $C$ and $c$. Taking $\tau=0$ and $\xi=y$, we denote $G(t,x;0,y)$ as $K_N(t,x,y)$, so that from \eqref{eqn-1.8}, we infer 
\begin{align}\label{eqn-1.9}
	|\nabla K_N(t,x,y)|\leq Ct^{-(d+1)/2} \exp\left(-c\frac{|x-y|^2}{t}\right).
\end{align}
Then from \eqref{eqn-sol}, we get 
\begin{equation}\label{eqn-1.11}
	S(t)f(x)
	=
	\int_\Omega K_N(t,x,y)f(y)\,dy.
\end{equation}
Taking $f\equiv 1$ in \eqref{eqn-1.11} and using \eqref{eqn-1.10}, we obtain
\begin{equation*}
1=S(t)1
=
\int_\Omega K_N(t,x,y)\,dy, 	\ 
t>0,\  x\in\Omega,
\end{equation*}
and hence 
\begin{align*}
	\int_\Omega \nabla_x K_N(t,x,y)\,dy=0.
\end{align*}
Therefore 
\begin{align}\label{grad S iden}
	\nabla S(t)f(x)=\int_{\Omega}\nabla_x K_N(t,x,y)[f(y)-f(x)]dy. 
\end{align}

Further, for \(f \in W^{1,\infty}(\Omega)\), the characterization of \(W^{1,\infty}\) given in \cite[Theorem~4, p.~277]{evans} implies that
\begin{align*}
	|f(y)-f(x)|\leq C_{\Omega}\|\nabla f\|_{L^{\infty}(\Omega)}|x-y|.
\end{align*}
Subsequently, we derive from \eqref{grad S iden} that 
\begin{align}\label{eqn-1.16}
	|\nabla S(t)f(x)|&\leq \int_{\Omega}|\nabla_x K_N(t,x,y)||f(y)-f(x)|dy\nonumber\\&\leq  C_{\Omega}\|\nabla f\|_{L^{\infty}(\Omega)}\int_{\Omega}|\nabla_x K_N(t,x,y)||x-y|dy. 
\end{align}
By using \eqref{eqn-1.9} in \eqref{eqn-1.16}, we obtain 
\begin{align}\label{eqn-1.17}
	|\nabla S(t)f(x)|&\leq C_{\Omega}\|\nabla f\|_{L^{\infty}(\Omega)} t^{-(d+1)/2} \int_{\Omega} \exp\left(-c\frac{|x-y|^2}{t}\right) |x-y|dy\nonumber\\&\leq C_{\Omega}\|\nabla f\|_{L^{\infty}(\Omega)} t^{-(d+1)/2} \int_{\mathbb{R}^d} \exp\left(-c\frac{|x-y|^2}{t}\right) |x-y|dy. 
\end{align}
Let us set $z=\frac{y-x}{\sqrt{t}}$. Then $dy=t^{d/2}dz$ and $|x-y|=\sqrt{t}|z|$, and  \eqref{eqn-1.17} implies 
\begin{align*}
	|\nabla S(t)f(x)|\leq C_{\Omega}\|\nabla f\|_{L^{\infty}(\Omega)} t^{-(d+1)/2} t^{1/2}t^{d/2} \int_{\mathbb{R}^d} |z|e^{-c|z|^2}dz=C_{\Omega}\|\nabla f\|_{L^{\infty}(\Omega)}  \int_{\mathbb{R}^d} |z|e^{-c|z|^2}dz. 
\end{align*}
Since $\int_{\mathbb{R}^d} |z|e^{-c|z|^2}dz=\frac{\pi^{d/2}\Gamma(\frac{d+1}{2})}{\Gamma(\frac{d}{2})}c^{-\frac{d+1}{2}}=:C_d<\infty,$  we arrive at 
\begin{align*}
	\|\nabla e^{t\Delta_N}f\|_{L^{\infty}(\Omega)}\leq C_{\Omega}\|\nabla f\|_{L^{\infty}(\Omega)}. 
\end{align*}
That is, \eqref{p<infty Neumann} holds true for $p=\infty$ with $f\in W^{1,\infty}(\Omega)$  without the exponential decay. 
\end{proof}
\begin{lemma}[{\cite[Lemma 2.5]{JAMAA25}}] \label{beta k condition}
	Let $\beta\leq k$. Then, for every
	\begin{equation}\label{beta cond for grad v}
		m \in
		\begin{cases}
			\left[1, \dfrac{dk}{\beta d - k}\right), & \beta \ge \dfrac{k}{d},\\[6pt]
			[1,\infty], & \beta < \dfrac{k}{d},
		\end{cases}
	\end{equation}
	there exists a constant $C>0$ such that
	\begin{equation}\label{grad v 1st cond}
		\|\nabla v(\cdot,t)\|_{L^m(\Omega)}
		\le C \left( 1 + \sup_{0<t<T} \|w(\cdot,t)\|_{L^k(\Omega)}^\beta \right),
	\end{equation}
	for all $t \in (0,T)$.
\end{lemma}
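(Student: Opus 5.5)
The plan is to work from the variation-of-constants representation of the $v$-equation. Write
\[
v(\cdot,t)=e^{t(\Delta-1)}v_0+\int_0^t e^{(t-s)(\Delta-1)}w^\beta(\cdot,s)\,ds ,
\]
so that
\[
\nabla v(\cdot,t)=\nabla e^{t(\Delta-1)}v_0+\int_0^t \nabla e^{(t-s)(\Delta-1)}w^\beta(\cdot,s)\,ds .
\]
The homogeneous part will be bounded in $L^m$ with a time-independent constant. For $m<\infty$ this follows from Lemma~\ref{Neumann estimates}(i), using $v_0\in C^1(\overline\Omega)$. For the endpoint $m=\infty$ it follows from Lemma~\ref{Neumann est infty case}; the factor $e^{-t}$ from the $-1$ in $\Delta-1$ gives $\|\nabla e^{t(\Delta-1)}v_0\|_{L^\infty}\le C_\Omega\|\nabla v_0\|_{L^\infty}$. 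This endpoint is exactly why the extension in Lemma~\ref{Neumann est infty case} was proved. For $m<2$ I reduce to $m=2$ via Hölder on the bounded domain $\Omega$.

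For the Duhamel integral I apply Lemma~\ref{Neumann estimates}(ii) with source exponent $q=k/\beta\ge 1$, which is admissible because $\beta\le k$. Since $\|w^\beta\|_{L^{k/\beta}}=\|w\|_{L^k}^\beta$, this gives
\[
\Big\|\int_0^t\nabla e^{(t-s)(\Delta-1)}w^\beta\,ds\Big\|_{L^m}\le C\sup_{0<s<T}\|w(\cdot,s)\|_{L^k}^\beta\int_0^\infty\Big(1+\sigma^{-\frac12-\frac d2(\frac{\beta}{k}-\frac1m)}\Big)e^{-(1+\lambda_1)\sigma}\,d\sigma .
\]
The integral is finite, uniformly in $t$, as soon as the exponent satisfies $\frac12+\frac d2\big(\frac\beta k-\frac1m\big)<1$, that is, $\frac1m>\frac{\beta}{k}-\frac1d=\frac{\beta d-k}{dk}$. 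Two cases arise, both requiring $m\ge q$ (smaller $m$ is again recovered from Hölder).
\begin{itemize}
\item If $\beta\ge k/d$, the condition is precisely $m<\frac{dk}{\beta d-k}$.
\item If $\beta<k/d$, the right-hand side is negative and every $m\in[1,\infty]$ is allowed; for $m=\infty$ the exponent is $\frac12+\frac{d\beta}{2k}<1$.
\end{itemize}
Combining the two contributions yields \eqref{grad v 1st cond}.

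I expect the delicate point to be the endpoint $m=\infty$ together with the constraint $q\le p$ in Lemma~\ref{Neumann estimates}(ii). One needs $k/\beta\le m$; when $m$ is below $k/\beta$ I first estimate in $L^{k/\beta}$ and then use the embedding $L^{k/\beta}\hookrightarrow L^m$. I also need to track that the constants do not depend on $t$; the exponential factor $e^{-(1+\lambda_1)\sigma}$ handles the large-time part of the integral.
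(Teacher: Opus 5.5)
Your proposal is correct and follows essentially the paper's route. You use the variation-of-constants formula, Lemma~\ref{Neumann est infty case} (or Lemma~\ref{Neumann estimates}(i) for finite $m$) for the homogeneous part, and Lemma~\ref{Neumann estimates}(ii) with source exponent $k/\beta$ for the Duhamel term, with the admissible $m$ determined by integrability of $\sigma^{-\frac12-\frac d2(\frac\beta k-\frac1m)}$ near $\sigma=0$. The paper writes out only the endpoint $m=\infty$ and defers finite $m$ to \cite{JAMAA25}, whereas your argument, including the H\"older reduction for $m<k/\beta$, covers all cases directly.
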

\begin{proof}
	For the case $\beta<\frac{k}{d}$ and $m\in[1,\infty)$, the result follows by the same argument as in \cite[Lemma 2.5]{JAMAA25}. We therefore restrict our attention to the case $m=\infty$ and provide the details below.
	From the second equation in \eqref{Main}, using the variation of constants formula, we have
	\begin{equation}\label{voc of v}
		v(\cdot,t)
		= e^{t(\Delta-1)}v_0
		+ \int_0^t e^{(t-s)(\Delta-1)} w^\beta(\cdot,s)\,ds,
		\ \text{for all } t \in (0,T).
	\end{equation}
Applying Lemma~\ref{Neumann est infty case} together with Lemma~\ref{Neumann estimates}(ii), with \(q=\frac{k}{\beta}\) and \(p=\infty\), we deduce
\begin{equation*}\label{L^m norm of grad v}
	\begin{aligned}
		\|\nabla v(\cdot,t)\|_{L^\infty(\Omega)}
		&\le \|\nabla e^{t(\Delta-1)}v_0\|_{L^\infty(\Omega)}
		+ \int_0^t
		\|\nabla e^{(t-s)(\Delta-1)} w^\beta(\cdot,s)\|_{L^\infty(\Omega)} ds\\
		&\le C \|\nabla v_0\|_{L^\infty(\Omega)}\\
		&\quad+C_2\int_0^t
		e^{-(t-s)(\lambda_1+1)}
		\left(1+(t-s)^{-\frac12 - \frac{d}{2}\left(\frac{\beta}{k}-\frac{1}{\infty}\right)}\right)
		\|w^\beta(\cdot,s)\|_{L^{k/\beta}(\Omega)} ds
		\\
		&\le C\|\nabla v_0\|_{L^\infty(\Omega)}\\
		&\quad +\sup_{0<t<T} \|w(\cdot,t)\|_{L^k(\Omega)}^\beta C_2
		\int_0^t
			e^{-(t-s)(\lambda_1+1)}
		\left(1+(t-s)^{-\frac12 - \frac{d}{2}\frac{\beta}{k}}\right) ds.	
	\end{aligned}
\end{equation*}
To ensure the finiteness of the integral, we require
\(
-\frac12-\frac{d}{2}\frac{\beta}{k}>-1,
\)
or equivalently,
\(
\beta<\frac{k}{d}
\). Therefore, in view of \eqref{Initial Cond} and the condition
\(\beta<\frac{k}{d}\), we infer that
\begin{equation*}
	\|\nabla v(\cdot,t)\|_{L^\infty(\Omega)}
	\le C \left( 1 + \sup_{0<t<T} \|w(\cdot,t)\|_{L^k(\Omega)}^\beta \right),
\end{equation*}
for all $t \in (0,T)$.

\end{proof}
\begin{lemma}[{\cite[Proposition 4.22(ii)]{Haroske}}]\label{compact embedd}
	Let $\Omega$ be a bounded domain with smooth boundary and let $r\in (0,\infty)$. Then
	\[
	W^{r,2}(\partial \Omega) \hookrightarrow L^2(\partial \Omega),
	\]
	is a compact embedding.
\end{lemma}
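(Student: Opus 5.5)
This is a compact-embedding statement for fractional Sobolev spaces on the closed manifold $\partial\Omega$. I would reduce it to two standard facts. The first is the Rellich--Kondrachov theorem for $W^{1,2}$ on a compact manifold. The second is the continuity of embeddings between fractional spaces of different order.

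The proof starts by localizing. Since $\partial\Omega$ is a compact $C^{2+\alpha}$ (in fact smooth) $(d-1)$-dimensional manifold, I would cover it by finitely many coordinate patches $U_1,\dots,U_N$ with charts $\kappa_j:U_j\to B\subset\mathbb{R}^{d-1}$. I would then fix a smooth partition of unity $\{\psi_j\}$ subordinate to this cover. As in the framework of \cite{Haroske}, $W^{r,2}(\partial\Omega)$ is normed by $\sum_j\|(\psi_j f)\circ\kappa_j^{-1}\|_{W^{r,2}(\mathbb{R}^{d-1})}$, and $L^2(\partial\Omega)$ is normed analogously. With these norms, compactness of the embedding on $\partial\Omega$ reduces to the following claim for each $j$. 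The maps $f\mapsto(\psi_jf)\circ\kappa_j^{-1}$ send bounded sets of $W^{r,2}(\partial\Omega)$ into bounded sets of $W^{r,2}(\mathbb{R}^{d-1})$ whose elements are supported in a fixed compact set $K_j$. We must show such sets are precompact in $L^2(\mathbb{R}^{d-1})$.

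For this Euclidean claim I would argue through the Fourier transform. Let $\{g_n\}$ be bounded in $W^{r,2}(\mathbb{R}^{d-1})$ with $\operatorname{supp}g_n\subset K_j$. Then $\int(1+|\xi|^2)^r|\hat g_n|^2\,d\xi\le M$, which gives the uniform tail bound
\[
\int_{|\xi|>R}|\hat g_n|^2\,d\xi\le M(1+R^2)^{-r}\to0 .
\]
The uniform compact support gives more: each $\hat g_n$ is smooth, and $|\hat g_n|$ and $|\nabla\hat g_n|$ are bounded uniformly in $n$ by $C(K_j)\|g_n\|_{L^2}$. By Arzelà--Ascoli, a subsequence of $\hat g_n$ converges uniformly on $\{|\xi|\le R\}$ for every $R$, and a diagonal argument handles all $R$ at once. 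Combining local uniform convergence with the uniform tail bound, $\hat g_n$ is Cauchy in $L^2$. Plancherel then shows $g_n$ is Cauchy in $L^2(\mathbb{R}^{d-1})$. This argument works for every $r>0$, so no integrality of $r$ is needed.

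The step I expect to be the main obstacle is justifying the localization. Two things must be checked. First, the partition-of-unity norm must be equivalent to whatever intrinsic definition of $W^{r,2}(\partial\Omega)$ is in use. Second, multiplication by $\psi_j$ and composition with the charts must be bounded on $W^{r,2}$ for fractional $r$, for which the boundary regularity must be at least of order $r$. If the paper only needs small $r$, such as $r\le 1$, I would use the Slobodeckij seminorm, for which these facts are elementary. Otherwise I would cite the invariance results from \cite{Haroske}.
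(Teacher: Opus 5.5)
Your argument is correct. Note, however, that the paper contains no proof of this lemma: it quotes it directly from \cite[Proposition 4.22(ii)]{Haroske}, so you are supplying a self-contained proof of a cited fact rather than reproducing the paper's reasoning. Your route has two steps. First, fix an atlas and a partition of unity. Second, prove a Rellich statement in $\mathbb{R}^{d-1}$ on the Fourier side, using the uniform bounds on $\hat g_n$ and $\nabla\hat g_n$ that come from the common compact support, Arzel\`a--Ascoli on balls, and the tail bound $\int_{|\xi|>R}|\hat g_n|^2\,d\xi\le M(1+R^2)^{-r}$. This makes it transparent that every $r>0$ works and that the geometry of $\partial\Omega$ enters only through the localization.

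The obstacle you flag is milder than you suggest. If $W^{r,2}(\partial\Omega)$ is \emph{defined} by the chart norm, compactness for that norm follows from four ingredients: your Euclidean step, the decomposition $f=\sum_j\psi_j f$, finitely many successive subsequence extractions, and two-sided Jacobian bounds comparing surface measure with Lebesgue measure in the charts. Atlas-independence is needed only for the space to be well defined.

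What the citation buys the paper is consistency. The lemma is composed with the trace theorem, Lemma~\ref{linear bounded map}, which comes from the same source, so $W^{r,2}(\partial\Omega)$ means the same thing in both statements. In a self-contained write-up you should use the definition under which that trace theorem holds.

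Also observe that in \eqref{fractional reg} the paper uses only boundedness of $W^{r+\frac12,2}(\Omega)\to L^2(\partial\Omega)$, with $r\in(0,\frac12)$. Compactness is therefore more than is needed there, and your Slobodeckij variant for small $r$ would already suffice.
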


\begin{lemma}[{\cite[Theorem 4.24(i)]{Haroske}}]\label{linear bounded map}
	Let $\Omega$ be a bounded domain with smooth boundary. If $r > 0$, then there exists a linear and bounded map from $W^{r+\frac{1}{2},2}(\Omega)$ onto $W^{r,2}(\partial \Omega)$.
\end{lemma}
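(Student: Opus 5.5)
The statement is the trace theorem for fractional Sobolev (Bessel-potential/Besov with $p=2$) spaces: for $r>0$ the restriction $f\mapsto f|_{\partial\Omega}$, initially defined on $C^\infty(\overline{\Omega})$, extends to a bounded linear map $\mathrm{Tr}:W^{r+\frac12,2}(\Omega)\to W^{r,2}(\partial\Omega)$ that is onto. I would prove it in three steps: the half-space model, localization to $\partial\Omega$, and construction of a bounded right inverse, which gives surjectivity.

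\emph{Half-space model.} On $\mathbb{R}^d_+=\{(x',x_d):x_d>0\}$ with $s=r+\frac12$, first work on $\mathbb{R}^d$ by Fourier transform. For $\varphi\in\mathcal{S}(\mathbb{R}^d)$ one has $\widehat{\varphi(\cdot,0)}(\xi')=\frac{1}{2\pi}\int_{\mathbb{R}}\hat\varphi(\xi',\xi_d)\,d\xi_d$. By Cauchy--Schwarz,
\[
|\widehat{\varphi(\cdot,0)}(\xi')|^2\le C\Bigl(\int_{\mathbb{R}}(1+|\xi|^2)^{-s}d\xi_d\Bigr)\int_{\mathbb{R}}(1+|\xi|^2)^{s}|\hat\varphi|^2d\xi_d .
\]
The first factor equals $C_s(1+|\xi'|^2)^{-s+\frac12}$, and it is finite exactly because $s>\frac12$, i.e.\ $r>0$. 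Multiplying by $(1+|\xi'|^2)^{r}$ and integrating in $\xi'$ gives $\|\varphi(\cdot,0)\|_{H^r(\mathbb{R}^{d-1})}\le C\|\varphi\|_{H^s(\mathbb{R}^d)}$. The half-space estimate then follows by composing with a bounded extension operator $H^s(\mathbb{R}^d_+)\to H^s(\mathbb{R}^d)$ (Stein/Rychkov universal extension). By density, $\mathrm{Tr}$ extends boundedly.

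For the right inverse, take $g\in H^r(\mathbb{R}^{d-1})$ and define
\[
(Eg)^{\wedge}(\xi',\xi_d)=\hat g(\xi')\,\frac{c\,\langle\xi'\rangle^{2s-1}}{(\langle\xi'\rangle^2+\xi_d^2)^{s}},\qquad \langle\xi'\rangle=(1+|\xi'|^2)^{1/2},
\]
with $c$ chosen so that the $\xi_d$-integral of the multiplier equals $2\pi$. Then $\mathrm{Tr}\,Eg=g$. A direct computation gives $\|Eg\|_{H^s}^2=C\int\langle\xi'\rangle^{2s-1}|\hat g|^2\,d\xi'=C\|g\|_{H^r}^2$, since $2s-1=2r$.

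\emph{Localization.} Cover $\partial\Omega$ by finitely many charts $U_j$ with diffeomorphisms $\kappa_j$ that flatten the boundary, and take a subordinate partition of unity $\{\psi_j\}$. Set $\mathrm{Tr}f=\sum_j(\mathrm{Tr}((\psi_jf)\circ\kappa_j^{-1}))\circ\kappa_j$. The space $W^{r,2}(\partial\Omega)$ is defined through these same charts, so boundedness reduces to two facts: multiplication by smooth cutoffs is bounded on $H^s$, and $H^s$ is invariant under smooth diffeomorphisms. Both hold for all real $s$; for noninteger $s$ this follows by interpolation between integer orders. For the right inverse, put $\mathcal{E}g=\sum_j\tilde\psi_j\,(E((\psi_jg)\circ\kappa_j^{-1}))\circ\kappa_j$, where $\tilde\psi_j=1$ on the support of $\psi_j$. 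This gives $\mathrm{Tr}\,\mathcal{E}g=\sum_j\psi_jg=g$, so $\mathrm{Tr}$ is onto, which is the claim of the statement.

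\emph{Main obstacle.} The hard step is the diffeomorphism and multiplier invariance of fractional $H^s$ when $s$ is not an integer, since the Fourier definition is not local. I would handle it by complex interpolation: $[H^{k},H^{k+1}]_\theta=H^{k+\theta}$ on both $\mathbb{R}^d_+$ and $\partial\Omega$, and the integer-order invariance is elementary. Alternatively, since the statement cites \cite[Theorem 4.24(i)]{Haroske}, the proof can simply invoke that reference after checking that our $W^{r,2}$ coincides with its $H^r=B^r_{2,2}$.
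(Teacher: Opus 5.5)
Your proposal is correct. The paper gives no proof of this lemma; it quotes \cite[Theorem 4.24(i)]{Haroske} and uses it as a black box. Your argument is therefore a reconstruction of the standard proof behind that citation, not an alternative to something in the paper.

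The ingredients are sound:
\begin{itemize}
\item The Cauchy--Schwarz bound on the hyperplane is where $r>0$ enters, equivalently $s=r+\frac12>\frac12$: it is exactly the integrability of $(1+t^2)^{-s}$.
\item The multiplier $c\langle\xi'\rangle^{2s-1}(\langle\xi'\rangle^2+\xi_d^2)^{-s}$ has a $\xi_d$-integral independent of $\xi'$. It reproduces $g$ on the hyperplane with $\|Eg\|_{H^s}\simeq\|g\|_{H^r}$, so it is a bounded right inverse and gives surjectivity.
\item Localization by boundary charts works, with multiplier and diffeomorphism invariance at fractional order obtained by interpolation from integer orders.
\end{itemize}

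Compared with the bare citation, your route gives this paper two concrete things. First, it shows that the surjection is the trace operator itself. The statement as written only asserts that some bounded linear surjection exists, but the proof of Lemma~\ref{derivative of grad v to power 2q} needs precisely the trace, since it composes it with the compact embedding of Lemma~\ref{compact embedd} to control $\bigl\||\nabla v|^q\bigr\|_{L^2(\partial\Omega)}$. Second, it makes explicit the identification $W^{r,2}=H^r=B^r_{2,2}$ needed to match the paper's spaces with those of the reference.

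The citation buys brevity. It also covers the technical points your sketch treats as routine: density of $C^\infty(\overline{\Omega})$, extension by zero after cutoff in fractional spaces, and chart independence of $W^{r,2}(\partial\Omega)$.
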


	\section{Local existence and uniqueness} 
	\label{sec:local}
	
	In this section, we establish the local existence and uniqueness of classical solutions to system \eqref{Main}. The main difficulty arises from the possible degeneracy of the motility function, which hinders the direct application of the classical theory for uniformly parabolic equations. To overcome this difficulty, we introduce a suitable cut-off function and construct an approximate uniformly parabolic system. We then employ classical parabolic regularity theory to derive the parabolic H\"older estimates required to verify the self-mapping property of the associated fixed-point operator. The existence of local classical solutions is obtained by applying the Schauder fixed-point theorem, while uniqueness follows from suitable a priori estimates.
	
	Our proof is based on the framework developed in \cite[Lemma~2.1]{SIAM} and the approach adopted in \cite[Section~2]{localex}. However, several modifications are required to deal with the additional difficulties posed by the present system.
\begin{theorem}\label{local ex lemma}
Let $\Omega\subset\mathbb{R}^d$ $(d\ge2)$ be a bounded domain with smooth boundary, and let
$\mu,\beta,\delta>0$, $r\in\mathbb{R}$, and $\theta\ge1$.
If the initial data fullfill \eqref{Initial Cond} and that the motility
function satisfies hypothesis \eqref{Hypothesis H1}, then there exist  $T_{\max}\in(0,\infty]$ and a triple of non-negative functions
$(u,v,w)$ satisfying 
\begin{equation}\label{eqn-regularity}
		\left\{
		\begin{aligned}
			u &\in C^0\big(\overline{\Omega}\times[0,T_{\max})\big)
			\cap C^{2,1}\big(\overline{\Omega}\times(0,T_{\max})\big),\\[2mm]
			v &\in C^0\big(\overline{\Omega}\times[0,T_{\max})\big)
			\cap C^{2,1}\big(\overline{\Omega}\times(0,T_{\max})\big)
			\cap L^\infty\big([0,T_{\max});W^{1,\infty}(\Omega)\big),\\[2mm]
			w &\in C^{0,1}\big(\overline{\Omega}\times[0,T_{\max})\big),
		\end{aligned}
		\right.
\end{equation}
	 which solves \eqref{Main} in the classical sense. Moreover, if
	$T_{\max}<\infty$, then it follows that
\begin{equation}\label{blow up crit}
		\begin{aligned}
				\limsup_{t\to T_{\max}} \|u(\cdot,t)\|_{L^\infty(\Omega)}= \infty.
		\end{aligned}
\end{equation}
\end{theorem}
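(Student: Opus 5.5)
We follow the strategy announced at the beginning of Section \ref{sec:local}: a cut-off to remove the degeneracy, a Schauder fixed point for smooth compatible data, and then an approximation argument for general data satisfying \eqref{Initial Cond}.

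\textbf{Step 1 (cut-off).} Fix $T\in(0,1)$ and $R>\|v_0\|_{L^\infty(\Omega)}+1$. Since $\gamma$ is positive and nonincreasing by \eqref{Hypothesis H1}, on $[0,R]$ we have $\gamma\ge\gamma(R)>0$. Define a $C^3$ function $\widehat\gamma$ that equals $\gamma$ on $[0,R]$ and satisfies $\gamma(R)/2\le\widehat\gamma\le\gamma(0)$ on $\mathbb{R}$. With $\widehat\gamma$ in place of $\gamma$, the $u$-equation becomes the uniformly parabolic equation
\[
u_t=\widehat\gamma(v)\Delta u+2\widehat\gamma'(v)\nabla v\cdot\nabla u+\widehat\gamma''(v)|\nabla v|^2u+\widehat\gamma'(v)\Delta v\,u+\mu u(1-u^\theta)(u-A).
\]

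\textbf{Step 2 (fixed point for smooth data).} Take data $(u_0^n,v_0^n,w_0^n)$ satisfying \eqref{initial-data-reg}. Let $X$ be the closed, bounded, convex set
\[
X=\left\{\bar u\in C^{l,l/2}(\overline\Omega\times[0,T]):\ \|\bar u\|_{L^\infty}\le \|u_0\|_{L^\infty}+1,\ \|\bar u\|_{C^{l,l/2}}\le M\right\}.
\]
Given $\bar u\in X$, define $w$ explicitly by
\[
w(x,t)=e^{-\delta t}w_0(x)+\int_0^te^{-\delta(t-s)}\bar u_+(x,s)\,ds .
\]
Then solve the linear $v$-equation with source $w^\beta$. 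By Schauder/$W^{2,1}_p$ theory \cite{Ladyzhynska}, $v\in C^{2+l,1+l/2}$, and for $T$ small we get $\|v\|_{L^\infty}\le R$. Next solve the $u$-equation linearized at $\bar u$, i.e.\ with the reaction term written as $\mu u(1-\bar u_+^\theta)(\bar u-A)$. Parabolic Hölder estimates together with the time-smallness of $T$ give $\Phi(\bar u)=u\in X$, and compactness of $C^{2+l,1+l/2}\hookrightarrow C^{l,l/2}$ makes $\Phi$ compact.

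Continuity of $\Phi$ is where the sublinear source appears. For $\beta\ge1$ we use \eqref{ineq power p 2}. For $\beta<1$ we use \eqref{ineq power p 1}, exploiting $w\ge e^{-\delta T}w_0$, or approximate $w^\beta$ by $(w+\varepsilon)^\beta$ and let $\varepsilon\to0$ at the end. Schauder's theorem then yields a fixed point. Nonnegativity of $u$ and $v$ follows from the comparison principle, since $0$ is a subsolution; because $v\le R$ we have $\widehat\gamma=\gamma$ along the solution, so it solves \eqref{Main}.

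\textbf{Step 3 (general data, main obstacle).} Approximate data satisfying \eqref{Initial Cond} by data $(u_0^n,v_0^n,w_0^n)$ as in \eqref{initial-data-reg}, converging in $C^0\times C^1\times C^1$. The key point is that $T$ must be independent of $n$. The existence time in Step 2 depends only on $\|u_0\|_{L^\infty}$, $\|v_0\|_{W^{1,\infty}}$ and $\|w_0\|_{W^{1,\infty}}$, so this holds. Then take uniform interior-in-time Hölder bounds, for instance in $C^{2+l,1+l/2}(\overline\Omega\times[\tau,T])$ for each $\tau>0$, together with continuity up to $t=0$ obtained from equicontinuity given by the semigroup representation and maximum-principle barriers. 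Arzelà–Ascoli plus a diagonal argument extract a limit that is a classical solution with the regularity \eqref{eqn-regularity}. The bound $v\in L^\infty(W^{1,\infty})$ comes from Lemma \ref{Neumann est infty case} and Lemma \ref{Neumann estimates}(ii). The Lipschitz regularity of $w$ comes from its explicit formula, once we know $u$ is Lipschitz in $t$ for $t>0$ and continuous.

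\textbf{Step 4 (extension and blow-up criterion).} Define $T_{\max}$ as the supremum of existence times. Suppose $T_{\max}<\infty$ while $\|u\|_{L^\infty}$ stays bounded. Then $w$ is bounded and Lipschitz, $v$ is bounded in $W^{1,\infty}$ with $v\le R'$, and $\gamma(v)\ge\gamma(R')>0$. The local existence time at any restart point is therefore bounded below uniformly, which contradicts maximality and yields \eqref{blow up crit}.

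I expect Step 3, obtaining $n$-uniform estimates near $t=0$ with only $C^0$ data for $u$, to be the hardest step. My first approach would be to use the divergence form $\Delta(\gamma(v)u)$ and apply De Giorgi–Nash Hölder estimates \cite[Ch.\ V]{Ladyzhynska} to $z=\gamma(v)u$, which controls the modulus of continuity up to $t=0$.
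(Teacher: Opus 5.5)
Your architecture (cut-off, Schauder fixed point for regularized compatible data, approximation, continuation) is the paper's. Truncating $\gamma$ at a level $R>\|v_0\|_{L^\infty(\Omega)}+1$ and securing $0\le v\le \|v_0\|_{L^\infty(\Omega)}+T\sup w^\beta\le R$ by comparison is actually more careful than the paper, which truncates at $M=\|u_0\|_{L^\infty(\Omega)}+1$ and simply asserts $v\le M$.

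However, Steps 3 and 4 rest on the claim that the existence time depends only on $\|u_0\|_{L^\infty}$, $\|v_0\|_{W^{1,\infty}}$ and $\|w_0\|_{W^{1,\infty}}$. That claim is false for the construction you give in Step 2. Every element of $X$ satisfies $\|\Phi(\bar u)\|_{C^{l,l/2}(\overline\Omega\times[0,T])}\le M$. On the other hand, $\|\Phi(\bar u)\|_{C^{l,l/2}(\overline\Omega\times[0,T])}\ge[u_0^n]_{C^l}$, because the slice $t=0$ is included in the seminorm. Closing the self-map also uses H\"older/Schauder constants that depend on $M$ and on H\"older norms of $u_0^n$. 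If $u_0\in C^0(\overline\Omega)$ is not H\"older continuous, then $\sup_n[u_0^n]_{C^l}=\infty$, since a uniform limit of functions with bounded $C^l$-seminorm is $C^l$. So $M=M(n)$ is unbounded and $T=T(n)$ may tend to $0$, leaving no common interval on which to run Arzel\`a--Ascoli.

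The same defect reappears in Step 4. Your restart time involves $\|w(t_0)\|_{W^{1,\infty}}$, but $\nabla w(t_0)=e^{-\delta t_0}\nabla w_0+\int_0^{t_0}e^{-\delta(t_0-s)}\nabla u(s)\,ds$ is not controlled by $\sup_t\|u(t)\|_{L^\infty}$. Hence ``$w$ bounded and Lipschitz'' does not follow from boundedness of $u$, and \eqref{blow up crit} is not obtained.

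What is missing is an $n$-independent a priori estimate for the regularized solutions on an interval $[0,T_0]$. Here $T_0$ must depend only on $\|u_0\|_{L^\infty}$, $\|v_0\|_{W^{1,\infty}}$ and $\|w_0\|_{L^\infty}$, not on $\nabla w_0$. You also need a continuation argument for the smooth problem showing that each regularized solution survives up to $T_0$.

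The right tool is the divergence form $u_t=\nabla\cdot\bigl(\widehat\gamma(v)\nabla u+\widehat\gamma'(v)u\nabla v\bigr)+\mu u(1-u^\theta)(u-A)$, in which only $\nabla v$ appears. Lemma \ref{Neumann est infty case} and Lemma \ref{Neumann estimates}(ii) control $\|\nabla v\|_{L^\infty}$ by $\|\nabla v_0\|_{L^\infty}$ and $\|w\|_{L^\infty}\le\|w_0\|_{L^\infty}+t\sup\|u\|_{L^\infty}$. Since the reaction term is bounded above by $\mu u$ for $u\ge0$, the De Giorgi--Nash--Moser results of \cite[Ch.~V]{Ladyzhynska} then give $L^\infty$ bounds (closed by a bootstrap for small $T_0$) and interior-in-time H\"older bounds, uniformly in $n$.

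Passing to $z=\gamma(v)u$, as you suggest, is a detour in the wrong direction. Its equation contains $v_t$, hence $\Delta v$, which is unbounded near $t=0$ when $v_0\in C^1$. Equicontinuity at $t=0$ has to come from the common modulus of continuity of the $u_0^n$. One way is to compare with the solution of the frozen linear problem started from a fixed smooth $\phi$ with $\|u_0-\phi\|_{L^\infty}<\varepsilon$, and use the linear $L^\infty$ estimate.

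Be aware that the paper does not supply this step either. Its $T$ is chosen through the H\"older constant $c_5$ up to $t=0$, and its Step 4 asserts $n$-uniform bounds in $C^{l_2,l_2/2}(\overline\Omega\times[0,T])$. That would force $u_0$ itself to be H\"older, so it cannot hold under \eqref{Initial Cond}.
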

\begin{proof}
Since the initial data \eqref{Initial Cond} do not possess sufficient regularity required to apply the abstract results from \cite{Ladyzhynska}, we first approximate them by a sequence of smoother functions. Specifically, for the non-negative initial data $(u_0,v_0,w_0) \in C^0(\overline{\Omega})\times 
 C^1(\overline{\Omega})\times  C^1(\overline{\Omega})$, there exists a sequence
\begin{align}\label{eqn-smooth}
\{(u_0^n,v_0^n,w_0^n)\}_{n\in\mathbb{N}}
\in
\bigl(C^{2+\alpha}(\overline{\Omega})\bigr)^3,
\ 0<\alpha<1,
\end{align}
such that as $n\to\infty$ 
\[
\begin{aligned}
	u_0^n \to u_0 \  \text{ in } \ C^0(\overline{\Omega}), \ 
	v_0^n \to v_0 \  \text{ in } \ C^1(\overline{\Omega}), \ 
	w_0^n \to w_0 \  \text{ in } \ C^1(\overline{\Omega}),
\end{aligned}
\]
and
	\[
	u_0^n,\; v_0^n,\; w_0^n \geq 0.
	\]
Moreover, we choose the approximating sequence to satisfy the compatibility conditions, namely,
	\[
	\partial_\nu u_0^n
	=
	\partial_\nu v_0^n
	=
	0
	\ \text{on }\partial\Omega.
	\]
	Such an approximation is possible by taking $u_0^n=e^{\frac{1}{n}\Delta_N}u_0$, $v_0^n=e^{\frac{1}{n}\Delta_N}v_0$ and $w_0^n=e^{\frac{1}{n}\Delta_N}w_0$, where $\Delta_N$ is the Neumann Laplacian. 
For notational convenience, we omit the superscript $n$ and denote the approximating sequence $(u_0^n,v_0^n,w_0^n)$ by $(u_0,v_0,w_0)$ throughout the remainder of the existence proof. We divide the local existence proof into the following steps:
\vskip 0.1cm 
\noindent 
\textbf{Step 1:} \emph{Basic setup for fixed point argument.} 
Fix $T\in(0,1)$, to be chosen appropriately later, and consider the Banach space
$
X:=C^0\bigl(\overline{\Omega}\times[0,T]\bigr).
$
We introduce a closed, bounded, and convex subset of \(X\) by
\begin{equation}\label{banach_space}
	S:=\left\{
	\bar{u}\in X:\;
	0\leq \bar{u}(x,t)\leq M,
	\ \text{for all}\ (x,t)\in\overline{\Omega}\times[0,T]
	\right\},
\end{equation}
where
\begin{align}\label{eqn-M}
M=\|u_0\|_{L^\infty(\Omega)}+1.
\end{align}
Since  the initial data $u_0$ satisfies \eqref{Initial Cond}, it is clear that $u_0\in S$. For a given \(\bar{u}\in S\), we first extend $\bar{u}$ to some continuous function defined on \(\overline{\Omega}\times[0,1]\).
We define the operator
$
\phi:S\longrightarrow S$,
$
\bar{u}\longmapsto \phi(\bar{u}):=u,
$
where \(u\) denotes the solution of the corresponding problem 
\begin{equation}\label{u}
	\left\{ 
	\begin{aligned}
		&u_t = \nabla \cdot \big(\widehat{\gamma}(v)\nabla u\big)
		-  \nabla \cdot \big(\widehat{\gamma}(v)u\nabla v) + \mu u(1-\bar{u}^\theta)(\bar{u}-A),	\quad	
		&& x \in \Omega,\ 0<t<1, \\
		&\partial_{\nu} u = 0, \quad
		&& x \in \partial \Omega,\ 0<t<1, \\
		&u(x,0) = u_0(x), \quad
		&& x \in \Omega,
	\end{aligned}
	\right.
\end{equation}
and $\widehat{\gamma}(v)\in C^3(\mathbb{R})$ satisfies 
\begin{equation}\label{gamma hat}
	\widehat{\gamma}(v) :=
	\begin{cases}
		\gamma(0), & \text{if } v < 0, \\
		\gamma(v), & \text{if } 0 \le v \le M, \\
		\gamma(M), & \text{if } v > M,
	\end{cases}
\end{equation}
where $v$ and $w$ are the solutions of 
\begin{equation}\label{v}
	\left\{
	\begin{aligned}
		&v_t = \Delta v-v+w^{\beta}, \
		&& x \in \Omega,\ 0<t<1, \\
		&\partial_{\nu} v = 0, \
		&& x \in \partial \Omega,\ 0<t<1, \\
		&v(x,0) = v_0(x), \
		&& x \in \Omega,
	\end{aligned}
	\right.
\end{equation}
and
\begin{equation}\label{w}
	\left\{
	\begin{aligned}
		&w_t = -\delta w+ \bar{u}, \
		&& x \in \Omega,\ 0<t<1, \\
		&w(x,0) = w_0(x), \
		&& x \in \Omega.
	\end{aligned}
	\right.
\end{equation}
We next apply the Schauder fixed point theorem to establish the existence of a fixed point $u$ of the mapping $\phi$, provided that $T>0$ is sufficiently small. Throughout the proof of Theorem \ref{local ex lemma}, we use $c_i>0$, $i=1,2,3,\ldots$, to represent generic positive constants depending upon $M=\|u_0\|_{L^\infty(\Omega)}, \|v_0\|_{W^{1,\infty}(\Omega)}$ and $\|w_0\|_{L^{\infty}(\Omega)}$. 

\vskip 0.1cm 
\noindent 
\textbf{Step 2:} \emph{Application of some parabolic regularity results.}
Since system \eqref{w} involves no spatial derivatives, consequently, its solution admits the explicit representation
\begin{equation}\label{duhamels for w}
	\begin{aligned}
		w(x,t) = w_0(x)e^{-\delta t}+ \int_0^t e^{-\delta(t-s)}\bar{u}(x,s)\,ds,
		\quad x \in \overline{\Omega},\ 0\leq t\leq 1.
	\end{aligned}
\end{equation}
By the assumption $w_0\in C^1(\overline{\Omega})$ and $\bar{u}\in C^0\bigl(\overline{\Omega}\times[0,1]\bigr)$, $w$ is jointly continous, non-negative, and satisfies
\begin{equation*}
	\begin{aligned}
		0\leq w(x,t)\leq \|w_0\|_{L^\infty(\Omega)}+\frac{M}{\delta}=c_1,\ \text{for all}\ (x,t)\in\overline{\Omega}\times [0,1].
	\end{aligned}
\end{equation*}
Differentiating \eqref{duhamels for w} with respect to $t$ under the 
integral sign yields the existence and continuity of $w_t$. Together 
with the continuity of $w$ on $\overline{\Omega}\times[0,1]$, this is consistent with the claimed regularity \(w \in C^{0,1}(\overline{\Omega}\times[0,1])\). Further, for $\bar{u}_1,\bar{u}_2$ with corresponding ${w}_1, {w}_2$,
\[w_1(x,t)-w_2(x,t)=\int_{0}^{t}e^{-\delta (t-s)}\left[\bar{u}_1(x,s)-\bar{u}_2(x,s)\right]ds,\]
it follows that
\[|w_1(x,t)-w_2(x,t)|\leq\sup_{0<s<t}\|\bar{u}_1(\cdot,s)-\bar{u}_2(\cdot,s)\|_{L^\infty(\Omega)}\int_{0}^{t}e^{-\delta (t-s)}ds\leq \frac{1}{\delta}\|\bar{u}_1-\bar{u}_2\|_{L^\infty(\Omega\times (0,1))},\]
taking the supremum over $\overline{\Omega}\times [0,1]$, we obtain
\[\|w_1-w_2\|_{L^\infty(\Omega\times (0,1))}\leq \delta^{-1}\|\bar{u}_1-\bar{u}_2\|_{L^\infty(\Omega \times (0,1))},\]
which claims the Lipschitz dependence of $w$ on $\bar{u}$.
Since $\beta>0$ and $0\leq w(x,t)\leq c_1$ for all $(x,t)\in(\overline{\Omega}\times [0,1])$, we have $w^\beta\in L^\infty(\Omega\times(0,1))$. Since $\Omega\times(0,1)$ is bounded, the continuous embedding $L^\infty(\Omega\times(0,1))\hookrightarrow L^p(\Omega\times(0,1))$ holds for every $1\leq p<\infty$. It follows that the source term $w^\beta$ in the system \eqref{v} lies in 
$L^p(\Omega\times(0,1))$ for each $1\leq p<\infty$. An application of the parabolic $L^p$-regularity theory from \cite[Theorem 9.1, Chapter IV]{Ladyzhynska} therefore yields
$v\in W^{2,1}_p(\Omega\times(0,1))$ for every $1<p<\infty$. For $p\in \left(d+2,\infty\right)$, the parabolic Sobolev embedding theorem \cite[Lemma 3.3, Chapter II, p.~82]{Ladyzhynska} implies that
$W^{2,1}_p(\Omega\times(0,1))\hookrightarrow C^{l_1+1,\frac{l_1+1}{2}}(\overline{\Omega}\times[0,1])$, where $l_1=1-\frac{d+2}{p}\in(0,1)$. Consequently, an application of the parabolic $L^p$-regularity theory \cite[Theorem 9.2, Chapter IV]{Ladyzhynska} yields
\begin{equation*}
\|v\|_{C^{l_1+1,\frac{l_1+1}{2}}(\overline{\Omega}\times[0,1])}\leq c_2,
\end{equation*}
where $l_1\in(0,1)$. Subsequently, we get
\begin{equation*}
	\|\nabla v(\cdot,t)\|_{L^\infty(\Omega\times(0,1))}\leq c_3.
\end{equation*}
Combining the above estimates with hypothesis \eqref{Hypothesis H1} and the definition of $\widehat{\gamma}(v)$ in \eqref{gamma hat}, we conclude that $\widehat{\gamma}(v)$ admits a positive lower bound that depends only on $M$. Moreover, since
$\widehat{\gamma}'(v)\nabla v \in L^\infty(\Omega\times(0,1))$ and
$\mu(1-\bar{u}^{\theta})(\bar{u}-A)
\in L^\infty(\Omega\times(0,1))$, the equation \eqref{u} is uniformly
parabolic.
Let us now establish the following results:
\begin{itemize}
	\item [\emph{(i)}] There exists a constant 
	$c_4>0$ such that
	$
	\|u\|_{L^\infty(\Omega\times(0,1))}\leq c_4.
	$
	\item [\emph{(ii)}] There exists a constant $c_5>0$ such that
	$\|u\|_{C^{l_2,\frac{l_2}{2}}(\overline{\Omega}\times[0,1])}
	\leq c_5,\ l_2\in(0,1)$.
\end{itemize}
\emph{Proof of (i)}. 
By virtue of the definition of the truncated motility function $\widehat{\gamma}$ in \eqref{gamma hat}, one can find positive constants $k_1$, $k_2$, and $k_3$ such that
\[
0<k_1\leq \widehat{\gamma}(v)\leq k_2,
\
|\widehat{\gamma}'(v)|\leq k_3\ \text{in} \ \Omega\times(0,1).
\]
To derive the constant $c_4>0$, we verify the hypotheses of
\cite[Theorem 2.1, Chapter V]{Ladyzhynska}. Since equation~\eqref{u} is written
in divergence form, it suffices to verify the following conditions.
 \begin{equation}\label{cond 1}
 	\begin{aligned}
 		|\widehat{\gamma}(v)\nabla u+\widehat{\gamma}'(v)u\nabla v|&\leq |\widehat{\gamma}(v)||\nabla u|+|\widehat{\gamma}'(v)||u||\nabla v|\\
 		&\leq k_2|\nabla u|+k_3c_3|u|,
 	\end{aligned}
 \end{equation}
 and
 \begin{equation}\label{cond 2}
 	\begin{aligned}
 		\mu u(1-\bar{u}^\theta)(\bar{u}-A)&=\mu(u\bar{u}-Au-u\bar{u}^{\theta+1}+Au\bar{u}^\theta)\\
 		&\leq \mu |u|\bar{u}+A|u|\bar{u}^{\theta},\\
 	|	\mu u(1-\bar{u}^\theta)(\bar{u}-A)|	&\leq (\mu M+AM^\theta)|u|,
 	\end{aligned}
 \end{equation}
 where $\theta\geq1$. Also,  using Young's inequality, we find 
 \begin{equation}\label{cond 3 u in L^infty}
 	\begin{aligned}
 		(\widehat{\gamma}(v)\nabla u+\widehat{\gamma}'(v)u\nabla v)\cdot\nabla u&=\widehat{\gamma}(v)|\nabla u|^2+\widehat{\gamma}'(v)u\nabla v\cdot\nabla u \\
 		&\geq \widehat{\gamma}(v)|\nabla u|^2-\widehat{\gamma}'(v)|u||\nabla v||\nabla u|\\
 		&\geq \frac{|\widehat{\gamma}(v)|}{2}|\nabla u|^2-|u|^2 \frac{|\widehat{\gamma}'(v)|^2}{|\widehat{\gamma}(v)|}|\nabla v|^2\\
 		&\geq \frac{k_1}{2}|\nabla u|^2-\frac{c^2_3k^2_3}{k_1}|u|^2.		
    \end{aligned}
\end{equation}
Therefore, it remains only to verify that
 \begin{equation}\label{C_4}
 	\begin{aligned}
 		-\mu u\left(u(1-\bar{u}^\theta)(\bar{u}-A)\right)&= \mu(-u^2\bar{u}+Au^2+u^2\bar{u}^{\theta+1}-Au^2\bar{u}^\theta) \\
 		&\leq \mu A|u|^2+\mu |u|^2 \bar{u}^{\theta+1}\\
 		&\leq \mu(A+M^{\theta+1})|u|^2.
 	\end{aligned}
 \end{equation}
 In view of \eqref{cond 1}-\eqref{C_4}, all the hypotheses of
 \cite[Theorem 2.1, Chapter V]{Ladyzhynska} are fulfilled. Consequently, there
 exists a constant $c_4>0$ such that
 $
 \|u\|_{L^\infty(\Omega\times(0,1))}\leq c_4.
 $ 
 
\noindent\emph{Proof of (ii).} 
 We now proceed to establish the parabolic H\"older regularity of $u$ by deriving an estimate of the form
 $
 \|u\|_{C^{l_2,\frac{l_2}{2}}(\overline{\Omega}\times[0,T])}\leq c_5,
 \ l_2\in(0,1).
 $
Since $
\|u\|_{L^\infty(\Omega\times(0,1))}\leq c_4
$,  the estimates \eqref{cond 1}-\eqref{C_4} allow us to invoke the parabolic regularity results \cite[Theorem 1.1, Chapter V]{Ladyzhynska} and \cite[Theorem 1.3, Remark 1.4]{Porzio}, from which the required parabolic Hölder continuity follows immediately such that
\begin{equation}\label{Holder-cont-of-u}
	\begin{aligned}
		\|u\|_{C^{l_2,\frac{l_2}{2}}{(\overline{\Omega}\times [0,1])}}\leq c_5,
	\end{aligned}
\end{equation}
where $l_2 \in (0,1)$. 
\vskip 0.1cm 
\noindent 
\textbf{Step 3:} \emph{Application of the Schauder fixed point theorem.} By using the definition \eqref{eqn-parabolic-norm}, from \eqref{Holder-cont-of-u},  we infer 
\begin{align*}
	|u(x,t)|
	\leq
	\|u_0\|_{L^\infty(\Omega)}
	+
	c_5 t^{\frac{l_2}{2}}, \ \text{ for all }\ (x,t)\in\overline{\Omega}\times[0,1]. 
\end{align*}
We therefore can choose $T\in(0,1)$ sufficiently small such that $T<\left(\frac{1}{c_5}\right)^{\frac{2}{l_2}}$ and  for all $x\in\overline{\Omega}$, and $t\in(0,T)$,
\[
|u(x,t)|
\leq
\|u_0\|_{L^\infty(\Omega)}
+
c_5 T^{\frac{l_2}{2}}
\leq M,
\]
where $M$ is defined in \eqref{eqn-M}. 
Therefore, $u\in S$, and hence the mapping $\phi$ maps $S$ into itself whenever $T>0$ is chosen sufficiently small. 

Additionally, we need to show that $\phi$ is continuous and $\phi(S)$ is relatively compact in $X$. Let $\widetilde u_i \in X$ and $u_i = \phi(\widetilde u_i)$ for $i=1,2$ and $v_i$ and $w_i$ satisfy
\begin{align}\label{v_i equation}
	\begin{cases}
		v_{it} = \Delta v_i - v_i + w_i^\beta, & x \in \Omega,\ 0<t<1,\\
		\partial_\nu v_i = 0, & x \in \partial\Omega,\ 0<t<1,\\
		v_i(x,0) = v_0(x), & x \in \Omega,
	\end{cases}
\end{align}
and
\begin{align}\label{w_i equation}
	\begin{cases}
		w_{it} = -\delta w_i + \bar{u}_i, & x \in \Omega,\ 0<t<1,\\
		w_i(x,0) = w_0(x), & x \in \Omega.
	\end{cases}
\end{align}
The self-mapping property of $\phi$, together with
\eqref{Holder-cont-of-u} and \eqref{w_i equation}, ensures that $w$
has the required parabolic H\"older regularity. Indeed,
fix $t\in[0,T]$ and $x,y\in\overline{\Omega}$. From the Duhamel
representation,
\[
w(x,t)-w(y,t)
=
e^{-\delta t}\bigl(w_0(x)-w_0(y)\bigr)
+
\int_0^t e^{-\delta(t-\tau)}
\bigl(u(x,\tau)-u(y,\tau)\bigr)\,d\tau.
\]
Since $w_0\in C^1(\overline{\Omega})$, it is Lipschitz on $\overline{\Omega}$,
with
\[
|w_0(x)-w_0(y)|
\leq L_0|x-y|,
\
L_0=\|\nabla w_0\|_{L^\infty(\Omega)}.
\]
Further, $u(\cdot,\tau)\in C^{l_2}(\overline{\Omega})$
uniformly in $\tau\in[0,T]$ with constant $c_5$, so that
\[
\left|
\int_0^t e^{-\delta(t-\tau)}
\bigl(u(x,\tau)-u(y,\tau)\bigr)\,d\tau
\right|
\leq
c_5|x-y|^{l_2}
\int_0^t e^{-\delta(t-\tau)}\,d\tau
\leq
\frac{c_5}{\delta}|x-y|^{l_2}.
\]
Combining the above estimates and using $e^{-\delta t}\leq 1$, we obtain
\[
|w(x,t)-w(y,t)|
\leq C|x-y|^{l_2},\ \text{ where }\ C=C(L_0,\Omega,l_2,c_5,\delta).
\]
For the temporal estimate, since
\[
w_t=-\delta w+u,
\]
with $0\leq w\leq c_1$ and $0\leq u\leq M$ in $\overline{\Omega}\times[0,1]$, we have
\[
|w_t|
\leq \delta c_1+M=L_1,
\]
on $\overline{\Omega}\times[0,T]$. Hence $w(x,\cdot)$ is Lipschitz
continuous in $t$, uniformly with respect to $x$:
\[
|w(x,t)-w(x,s)|
\leq L_1|t-s|.
\]
Since $t,s\in[0,T]\subset[0,1]$, we have $|t-s|\leq1$, 
combining both spatial and temporal estimates, we conclude that
\[
w\in C^{l_2,\frac{l_2}{2}}
\bigl(\overline{\Omega}\times[0,T]\bigr). 
\]
We claim that
\begin{align}\label{eqn-para-reg}
w^\beta\in C^{l_3,\frac{l_3}{2}}
\bigl(\overline{\Omega}\times[0,T]\bigr), \ \text{ where }\  l_3=l_2\min\{\beta,1\}\in(0,1).
\end{align}

\textit{Case 1: $\beta\geq 1$.}
The map $s\mapsto s^\beta$ is $C^1$, hence Lipschitz, on $[0,M]$,
with constant $\beta M^{\beta-1}$. Composing a Lipschitz map with a
H\"older-$l$ map preserves the exponent $l$ (only the
constant changes), so
\[
|w^\beta(x,t)-w^\beta(y,s)|
\leq
\beta M^{\beta-1}|w(x,t)-w(y,s)|
\leq
C\left(|x-y|^{l_2}+|t-s|^{l_2/2}\right),
\]
that is, 
\[
w^\beta
\in
C^{l_2,\frac{l_2}{2}}
\bigl(\overline{\Omega}\times[0,T]\bigr).
\]

\textit{Case 2: $0<\beta<1$.}
For all $a,b\geq0$, the elementary inequality
\[
|a^\beta-b^\beta|\leq |a-b|^\beta,
\]
holds. Applying this with $a=w(x,t)$ and $b=w(y,s)$, and using
\[
(A+B)^\beta\leq A^\beta+B^\beta,
\ A,B\geq0,\ \beta\in(0,1),
\]
we obtain
\[
\begin{aligned}
	|w^\beta(x,t)-w^\beta(y,s)|
	&\leq |w(x,t)-w(y,s)|^\beta \\
	&\leq
	\left(
	C\bigl(|x-y|^{l_2}+|t-s|^{l_2/2}\bigr)
	\right)^\beta \\
	&\leq
	C^\beta
	\left(
	|x-y|^{l_2\beta}
	+
	|t-s|^{l_2\beta/2}
	\right).
\end{aligned}
\]
In either case, we deduce \eqref{eqn-para-reg}. 
An application of the classical parabolic regularity theory in \cite[Theorems 5.1-5.3, Chapter IV, p.~322]{Ladyzhynska} then yields
\begin{align}\label{schauder regu. v}
	v \in C^{2+l_3,\,1+\frac{l_3}{2}}(\overline{\Omega} \times [0,T]),
\end{align}
for some $l_3 \in (0,1)$. Now, using \eqref{schauder regu. v} and  applying  \cite[Theorem 6.1, Chapter V, p.~452]{Ladyzhynska}, we also obtain
\begin{align}\label{schauder regu. u}
	u \in C^{2+l_4,\,1+\frac{l_4}{2}}(\overline{\Omega} \times [0,T]),
\end{align}
for some $l_4 \in (0,1)$. 

Letting $h=\phi(\bar{u}_1)-\phi(\bar{u}_2)=u_1-u_2$, then from \eqref{u}, we find
 \begin{equation}\label{h eq}
 \left\{
 \begin{aligned}
 	&h_t = \nabla\cdot(\widehat{\gamma}(v_1)\nabla h)
 	+ \widehat{\gamma}'(v_1)\nabla v_1\cdot\nabla h
 	+ g_1(x,t)h + g_2(x,t),\quad
 	&& x\in\Omega,\ t\in(0,T), \\ 
 	&\partial_{\nu} h = 0,\quad
 	&& x\in\partial\Omega,\ t\in(0,T), \\ 
 	&h(x,0) = 0,\quad
 	&& x\in\Omega,
 \end{aligned}
 \right.
 \end{equation}
where
\begin{equation}\label{g1 define}
	\begin{aligned}
		g_1(x,t)=\widehat{\gamma}''(v_1)|\nabla v_1|^2+\widehat{\gamma}'(v_1)\Delta v_1+\mu \left(\bar{u}_1-A-\bar{u}_1^{\theta+1}+A\bar{u}_1^\theta\right)
	\end{aligned},
\end{equation}
with
\begin{equation}
	\begin{aligned}
		g_2(x,t)=&\nabla\cdot([\widehat{\gamma}(v_1)-\widehat{\gamma}(v_2)]\nabla u_2)+\nabla\cdot[(\widehat{\gamma}'(v_1)-\widehat{\gamma}'(v_2))u_2\nabla v_1]+\nabla\cdot[\widehat{\gamma}'(v_2)u_2(\nabla v_1-\nabla v_2)]\\
		&+\mu u_2\left(\bar{u}_1-\bar{u}_2\right)\left[1-(\theta+1)(\bar{u}_1+\bar{u}_2)^{\theta}+A\theta(\bar{u}_1+\bar{u}_2)^{\theta-1}\right].
	\end{aligned}
\end{equation}
Owing to the facts that $g_1\in L^\infty(\Omega\times(0,T))$ and $\widehat{\gamma}'(v_1)\nabla v_1 \in L^\infty(\Omega \times (0,T))$, an application of the parabolic $L^p$-regularity theory to \eqref{h eq} (cf. \cite[Theorem~9.2, Chapter~IV]{Ladyzhynska}) implies that
\begin{equation}
	\begin{aligned}\label{h-g_2 est}
		\|h\|_{W_p^{2,1}{(\Omega\times(0,T))}}&\leq c_6\|g_2\|_{L^p{(\Omega\times(0,T))}},
	\end{aligned}
\end{equation}
for $1<p<\infty$. By the mean value theorem, together with the assumption $\widehat{\gamma}\in C^3(\mathbb{R})$ and \eqref{gamma hat}, there exists a positive constant $C$ such that
$|\widehat{\gamma}(v_1)-\widehat{\gamma}(v_2)|\le C|v_1-v_2|$ and
$|\widehat{\gamma}'(v_1)-\widehat{\gamma}'(v_2)|\le C|v_1-v_2|$. Consequently, each nonlinear term in $g_2$ can be estimated in terms of $\Delta(v_1-v_2)$, $\nabla(v_1-v_2)$, and $v_1-v_2$. Moreover, since $u_i\in C^{2+l_4,\,1+\frac{l_4}{2}}(\overline{\Omega}\times[0,T])$ and $v_i\in C^{2+l_3,\,1+\frac{l_3}{2}}(\overline{\Omega}\times[0,T])$ for $i=1,2$, it follows that $u_2$, $\nabla u_2$, and $\nabla v_1$ are uniformly bounded on $\overline{\Omega}\times[0,T]$. Therefore, from \eqref{h-g_2 est}, we deduce
\begin{equation}\label{h in v and u}
	\begin{aligned}
		\|h\|_{W_p^{2,1}{(\Omega\times(0,T))}}&\leq c_7\left(\|v_1-v_2\|_{W^{2,p}(\Omega\times (0,T))}+\|\bar{u}_1-\bar{u}_2\|_{L ^{p}(\Omega\times (0,T))}\right).
	\end{aligned}
\end{equation}
Further, from \eqref{v_i equation} and \eqref{w_i equation}, it follows that
\begin{equation}\label{v in w beta}
	\begin{aligned}
		\|v_1-v_2\|_{W^{2,p}(\Omega \times (0,T))}\leq c_{8}\|w^\beta_1 -w^\beta_2\|_{L ^{p}({\Omega}\times (0,T))}.
	\end{aligned}
\end{equation}
Next, we estimate the $L^p$-norm of $w_1^\beta-w_2^\beta$ by considering two separate cases, depending on the value of $\beta$. \\
\textit{Case 1: $0<\beta<1$.} Invoking inequality \eqref{ineq power p 1} for $A,B> 0$ with $0<\beta<1$ yields
\begin{align}\label{combined ineq}
	(B^{1-\beta}+A^{1-\beta})|A^\beta-B^\beta|\leq \frac{1}{2^{\beta-1}}|A-B|.
\end{align}
Subsequently, we employ the elementary inequalities
$(a+b)^q \leq a^q+b^q$ for $0<q<1$ and
$(a+b)^q \leq 2^{q-1}(a^q+b^q)$ for $q\geq1$, valid for all
$a,b\geq0$. Setting $q=\frac{1-\beta}{\beta}$ and recalling that
$\beta\in(0,1)$, we obtain 
\begin{equation*}
	\begin{aligned}
		\int_{\Omega}|w^\beta_1-w^\beta_2|^p &=\int_{\Omega}|w^\beta_1-w^\beta_2|^\frac{1-\beta}{\beta}|w^\beta_1-w^\beta_2||w^\beta_1-w^\beta_2|^{p-\frac{1}{\beta}}\\
		&\leq {C_1}\int_{\Omega}|w^{1-\beta}_1+w^{1-\beta}_2||w^\beta_1-w^\beta_2||w^\beta_1-w^\beta_2|^{p-\frac{1}{\beta}},
	\end{aligned}
\end{equation*}
where 
\[
C_1=
\begin{cases}
	1, & \text{if } \beta>\dfrac{1}{2},\\[1ex]
	2^{\frac{1-2\beta}{\beta}}, & \text{if } 0<\beta\leq\dfrac{1}{2}.
\end{cases}
\] 
In addition, for $p\geq\frac{1}{\beta}$, we further get
\begin{equation*}
	\int_{\Omega}|w^\beta_1-w^\beta_2|^p\leq {C_1} 2^{p-\frac{1}{\beta}-1}\sup_{x\in \Omega}\left[|w_1(x)|^{\beta(p-\frac{1}{\beta})}+|w_2(x)|^{\beta(p-\frac{1}{\beta})}\right]\int_{\Omega}|w^{1-\beta}_1+w^{1-\beta}_2||w^{\beta}_1-w^{\beta}_2|,
\end{equation*}		
for all $t\in(0,T)$. Consequently, \eqref{combined ineq} yields
\begin{equation*}
	\begin{aligned}
		\int_{\Omega}|w^\beta_1-w^\beta_2|^p&\leq {C_1} \frac{2^{p-\frac{1}{\beta}-1}}{2^{\beta-1}}\sup_{x\in \Omega}[|w_1(x)|^{\beta(p-\frac{1}{\beta})}+|w_2(x)|^{\beta(p-\frac{1}{\beta})}]|\Omega|\|w_1-w_2\|_{L^2(\Omega)}\\
		&\leq {C_2}\|w_1-w_2\|_{L^2(\Omega)}\\
		&\leq {C_3}\|w_1-w_2\|_{L^\infty (\Omega)},
	\end{aligned}
\end{equation*}
for all $t\in(0,T)$. Taking supremum over time $t\in(0,T)$, it leads to
\begin{equation*}
		\sup_{0<t<T}\int_{\Omega}|w_1^\beta(\cdot,t)-w_2^\beta(\cdot,t)|^p
		\le
		C_3\sup_{0<t<T}
		\|w_1(\cdot,t)-w_2(\cdot,t)\|_{L^\infty(\Omega)},
	\end{equation*}
	and
	\begin{equation*}
		\|w_1^\beta-w_2^\beta\|_{L^\infty(0,T;L^p(\Omega))}^p\leq {C_4}\|w_1-w_2\|_{L^{\infty}(\Omega\times(0,T))}.
\end{equation*}
Finally, combining the above estimate with the explicit representation formula \eqref{duhamels for w} for $w$, we obtain
\begin{equation}\label{w_i in u_i}
	\begin{aligned}
	\|w^\beta_1-w^\beta_2\|^{p}_{L^p(\Omega\times(0,T))}\leq {C_5}\|w_1-w_2\|_{L^\infty (\Omega\times(0,T))}\leq C \|\bar{u}_1-\bar{u}_2\|_{L^{\infty}(\Omega\times(0,T))},
	\end{aligned}
\end{equation}
for all $t\in(0,T)$, where $C_5$ and $C$ are positive constants depending on $T$. Hence, from \eqref{v in w beta}, we infer 
\[
\|v_1-v_2\|_{W^{2,p}(\Omega\times(0,T))}
\leq c_{9}\left(\|\bar{u}_1-\bar{u}_2\|_{L^p(\Omega\times(0,T))}\right)^{1/p},
\]
with $c_{9}=c_{8}C^{1/p}$. Having established this, from \eqref{h in v and u}, we further have 
\begin{equation}\label{h in u in C0}
\begin{aligned}
\|h\|_{W_p^{2,1}(\Omega)\times(0,T)}&\leq c_{10} \left(\|\bar{u}_1-\bar{u}_2\|_{L^p(\Omega\times(0,T))}\right)^{1/p}	+c_7\|\bar{u}_1-\bar{u}_2\|_{L^p(\Omega\times(0,T))}\\
&\leq c_{11}\left(\left (\|\bar{u}_1-\bar{u}_2\|_{C^0(\overline{\Omega}\times[0,T])}\right)^{1/p}	+\|\bar{u}_1-\bar{u}_2\|_{C^0(\overline{\Omega}\times[0,T])}\right).
\end{aligned}
\end{equation}
Furthermore, by the parabolic Sobolev embedding theorem, which serves as the parabolic counterpart of Morrey's inequality, we obtain
\[
W_p^{2,1}(\Omega \times (0, T)) \hookrightarrow C^{l_5,\frac{l_5}{2}}(\overline{\Omega} \times [0, T]) \hookrightarrow C^0(\overline{\Omega} \times [0, T]).
\]
for every  $\frac{d+2}{2}<p<d+2$ (see \cite[Chapter~II, p.~82]{Ladyzhynska}; cf. \cite[Chapter~5, p.~265]{evans}) and some $l_5\in(0,1)$. It follows that
\begin{align}\label{embedding}
	\|h\|_{C^0(\overline{\Omega}\times[0,T])} \leq c_{12}\|h\|_{C^{l_5,\frac{l_5}{2}}(\overline{\Omega}\times[0,T])} \leq c_{13}\|h\|_{W_p^{2,1}(\Omega\times(0,T))},\ \text{ where }\ p\in\left(\frac{d+2}{2},d+2\right).
\end{align}
Finally, combining \eqref{h in u in C0} with the embedding result \eqref{embedding}, and taking $p$ sufficiently large, we obtain, for the case $0<\beta<1$,
\begin{equation*}
	\begin{aligned}
		\|\phi(\bar{u}_1)-\phi(\bar{u}_2))\|_{C^0(\overline{\Omega}\times[0,T])}&=\|h\|_{C^0(\overline{\Omega}\times[0,T])}\\
		&\leq c_{14}\left(\left (\|\bar{u}_1-\bar{u}_2\|_{C^0(\overline{\Omega}\times[0,T])}\right)^{1/p}	+\|\bar{u}_1-\bar{u}_2\|_{C^0(\overline{\Omega}\times[0,T])}\right),
	\end{aligned}
\end{equation*}
where $c_{14}=c_{13}c_{11}.$ Thus, $\phi$ is continous for the case $0<\beta<1$.\\
\textit{Case 2: $\beta\geq1$.} Applying the inequality \eqref{ineq power p 2} from Lemma \ref{power inequality} to \eqref{v in w beta}, we directly establish
\[
\|v_1-v_2\|_{W^{2,p}(\Omega\times(0,T))}
\leq c_{15}\|w_1-w_2\|_{L^\infty(\Omega\times(0,T))}
\leq c_{16}\|\bar{u}_1-\bar{u}_2\|_{C^0(\overline{\Omega}\times[0,T])},
\]
where $c_{15}=\beta\sup_{x\in\Omega}|w_1(x)+w_2(x)|^{\beta-1}$. Again, by similar embedding argument in \eqref{embedding}, we arrive at 
\[
\|\phi(\bar{u}_1) - \phi(\bar{u}_2)\|_{C^0(\overline{\Omega}\times[0,T])}
= \|h\|_{C^0(\overline{\Omega}\times[0,T])}
\leq c_{17}\|\bar{u}_1 - \bar{u}_2\|_{C^0(\overline{\Omega}\times[0,T])},
\]
accordingly, $\phi$ is also continuous for the case $\beta\geq1$. 

Afterwards, for every $\bar{u}\in S$, there exist constants $c_{18}>0$ and
$l_6\in(0,1)$ such that
$
\|\phi(\bar{u})\|_{C^{l_6,\frac{l_6}{2}}(\overline{\Omega}\times[0,T])}
\le c_{18}.
$
Hence, by the compact embedding
$
C^{l_6,\frac{l_6}{2}}(\overline{\Omega}\times[0,T])
\hookrightarrow\hookrightarrow
C^0(\overline{\Omega}\times[0,T]),
$
it follows that $\phi(S)$ is relatively compact in
$X=C^0(\overline{\Omega}\times[0,T])$. Therefore, Schauder's fixed point
theorem guarantees the existence of a fixed point $u\in X$ satisfying
$\phi(u)=u$. Substituting $\bar{u}=u$ into \eqref{u}, we obtain the
corresponding solution of \eqref{Main} with $\gamma(\cdot)$ replaced by
$\widehat{\gamma}(\cdot)$. The associated function $w$ is then determined
uniquely from \eqref{w}, while $v$ is recovered from \eqref{voc of v}.
Furthermore, \eqref{banach_space} yields
$
u(x,t)\ge0
\ \text{ for all }(x,t)\in\overline{\Omega}\times[0,T].
$
Since \eqref{w} contains no spatial derivatives, its explicit representation,
together with the nonnegativity of $w_0$ and $u$, implies that
$
w(x,t)\ge0
\ \text{ in }\overline{\Omega}\times[0,T].
$
Consequently, the source term $w^\beta$ in \eqref{v} is nonnegative, and the
parabolic comparison principle ensures that
$
v(x,t)\ge0
\ \text{ in }\overline{\Omega}\times[0,T].
$
Moreover, since $0\le v\le M$, it follows from the definition
\eqref{gamma hat} that
$
\widehat{\gamma}(v)=\gamma(v).
$
Hence, the truncation is inactive along the obtained solution. Finally,
combining this observation with the parabolic regularity theory, we conclude
that $(u,v,w)$ is a nonnegative local classical solution of the original
system \eqref{Main} with the smooth initial data given in \eqref{eqn-smooth}.
\vskip 0.1cm 
\noindent 
\textbf{Step 4:} \emph{Existence of local maximal solutions to \eqref{Main}.} 
We denote the approximating solutions by
$(u^n,v^n,w^n)$,  where
$(u^n,v^n,w^n)$ is the local classical solution of \eqref{Main}
corresponding to the approximating initial data
$(u_0^n,v_0^n,w_0^n)$. Furthermore,
\[
0\leq u^n(x,t)\leq \|u_0^n\|_{L^\infty(\Omega)}+1
\leq \|u_0\|_{L^\infty(\Omega)}+1
=: \widetilde{M},
\]
where the constant $\widetilde{M}$ is independent of $n$. Moreover, we have the
uniform regularity estimates (\cite{Ladyzhynska})
\begin{align*}
u^n&\in C^{l_2,\,\frac{l_2}{2}}
(\overline{\Omega}\times[0,T]) \cap C^{2+l_4,\,1+\frac{l_4}{2}}
(\overline{\Omega}\times[\varepsilon,T]),\\
v^n&\in C^{1+l_1,\,\frac{1+l_1}{2}}
(\overline{\Omega}\times[0,T]) \cap C^{2+l_3,\,1+\frac{l_3}{2}}
(\overline{\Omega}\times[\varepsilon,T]),\\
w^n&\in C^{l_2,\,\frac{l_2}{2}}
(\overline{\Omega}\times[0,T]),
\end{align*}
for some $\varepsilon>0$,  where $l_i,l_j\in(0,1)$, $j=2$ and $i=3,4$, with bounds independent of $n$.
Therefore, by the compact embedding of parabolic H\"older spaces, there exists a subsequence, still denoted by $(u^n,v^n,w^n)$, and functions $(u,v,w)$ such that, for every $0<\ell_i<l_i$, $i=1,\ldots,4$,
\begin{align*}
	u^n &\rightarrow u
	\quad \text{strongly in} \quad
	C^{\ell_2,\frac{\ell_2}{2}}
	(\overline{\Omega}\times[0,T])
	\cap
	C^{2+\ell_4,\,1+\frac{\ell_4}{2}}
	(\overline{\Omega}\times[\varepsilon,T]),\\[1ex]
	v^n &\rightarrow v
	\quad \text{strongly in} \quad
	C^{1+\ell_1,\frac{1+\ell_1}{2}}
	(\overline{\Omega}\times[0,T])
	\cap
	C^{2+\ell_3,\,1+\frac{\ell_3}{2}}
	(\overline{\Omega}\times[\varepsilon,T]),\\[1ex]
	w^n &\rightarrow w
	\quad \text{strongly in} \quad
	C^{\ell_2,\frac{\ell_2}{2}}
	(\overline{\Omega}\times[0,T]).
\end{align*}
Consequently, all nonlinear terms converge strongly, allowing us to pass to the limit in the equations and the homogeneous Neumann boundary conditions. Moreover, since the convergence holds up to $t=0$ in the lower parabolic H\"older spaces,
\[
u(\cdot,0)=u_0,\
v(\cdot,0)=v_0,\
w(\cdot,0)=w_0.
\]
Hence, $(u,v,w)$ is a local solution of \eqref{Main} corresponding to the initial data
\[
(u_0,v_0,w_0)\in
C(\overline{\Omega})
\times
C^1(\overline{\Omega})
\times
C^1(\overline{\Omega}),
\]
with the regularity given in \eqref{eqn-regularity}. 

Since the choice of $T$ depends only on
$M=\|u_0\|_{L^\infty(\Omega)}+1$ and the prescribed initial data in
\eqref{Initial Cond}, rather than on the initial time, the local
construction may be repeated from any $t_0$ for which the solution
exists. More precisely,
$
\bigl(u(\cdot,t_0),v(\cdot,t_0),w(\cdot,t_0)\bigr)
$
can be taken as new initial data, since they preserve the regularity
required in \eqref{Initial Cond} as long as $\|u(\cdot,t_0)\|_{L^\infty(\Omega)}$ remains finite. Repeating this continuation procedure yields a maximal
existence interval $(0,T_{\max})$, with
$T_{\max}\in(0,\infty]$. The standard continuation argument then gives
the extensibility criterion stated in \eqref{blow up crit}. 
\end{proof}
Having established the local existence of classical solutions satisfying \eqref{eqn-regularity}, we now turn to their uniqueness. The previous theorem guarantees the existence of at least one local classical solution for lower regularity initial data satisfying 
\[
u_0\in C^0(\overline{\Omega}),\
v_0,w_0\in C^1(\overline{\Omega}),
\]
consequently, at least one local classical solution exists satisfying \eqref{eqn-regularity}.
Moreover, the classical parabolic regularity ensures that, for every
\(\varepsilon>0\),
\begin{align}\label{classical parab. regularity}
u\in
C^{2+l_4,\,1+\frac{l_4}{2}}
(\overline{\Omega}\times[\varepsilon,T_{\max})),
\
v\in
C^{2+l_3,\,1+\frac{l_3}{2}}
(\overline{\Omega}\times[\varepsilon,T_{\max})),
\end{align}
for some \(l_3,l_4\in(0,1)\). 

\begin{remark}
	The initial data are regularized according to \eqref{eqn-smooth} in order to invoke the abstract parabolic theory \cite[Theorem~6.1, Chapter~V, p.~452]{Ladyzhynska}. The resulting regularity of $u$ and $v$ plays a key role in showing the continuity of the associated fixed-point map, thereby allowing the Schauder fixed-point theorem to be applied to construct local classical solutions with the regularity stated in \eqref{eqn-regularity}. In the absence of this regularization, the classical parabolic regularity in \eqref{classical parab. regularity} alone does not yield the $L^\infty(\Omega\times(0,T))$ bound required for $g_1$. Consequently, the parabolic $L^p$-regularity estimate in \eqref{h-g_2 est} cannot be applied directly.
\end{remark}
We shall now show that, under a stronger regularity assumption on the initial data given in \eqref{eqn-smooth}, the corresponding local classical solution satisfying \eqref{eqn-regularity} is unique.
	
\begin{theorem}\label{Uniqueness of local classical solutions}
Let $\Omega\subset\mathbb{R}^d$ $(d\ge2)$ be a bounded domain with a smooth boundary. Let $\mu,\beta,\delta>0$, $r\in\mathbb{R}$, and $\theta\ge1$. Suppose that the initial data satisfy  \eqref{initial-data-reg}. 
Moreover, assume that $w_0>0$ when $\beta\in(0,1)$, while $w_0\ge0$ when $\beta\in[1,\infty)$. Finally, suppose that the motility function satisfies \eqref{Hypothesis H1}.
Then any local classical solution $(u,v,w)$ of \eqref{Main} satisfying
\begin{align}\label{regularity at 0}
\left\{
\begin{aligned}
	u &\in
	C^{l_2,\frac{l_2}{2}}
	(\overline{\Omega}\times[0,T_{\max}))
	\cap
	C^{2+l_4,\,1+\frac{l_4}{2}}
	(\overline{\Omega}\times[0,T_{\max})),\\
	v &\in
	C^{1+l_1,\frac{1+l_1}{2}}
	(\overline{\Omega}\times[0,T_{\max}))
	\cap
	C^{2+l_3,\,1+\frac{l_3}{2}}
	(\overline{\Omega}\times[0,T_{\max})),\\
	w &\in
	C^{l_2,\frac{l_2}{2}}
	(\overline{\Omega}\times[0,T_{\max})),
\end{aligned}
\right.
\end{align}
is unique.
\end{theorem}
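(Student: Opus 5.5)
Suppose $(u_1,v_1,w_1)$ and $(u_2,v_2,w_2)$ are two local classical solutions of \eqref{Main} on $\overline{\Omega}\times[0,T]$ for some fixed $T<T_{\max}$, both with the regularity \eqref{regularity at 0} and the same initial data. We set $U=u_1-u_2$, $V=v_1-v_2$, $W=w_1-w_2$. The plan is to derive a closed Gronwall-type inequality for
\[
y(t):=\int_\Omega U^2+\int_\Omega |\nabla V|^2+\int_\Omega V^2+\int_\Omega W^2 .
\]
Since $y(0)=0$, Gronwall's lemma then gives $y\equiv0$ on $[0,T]$.

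\textbf{Uniform bounds on $[0,T]$.} By \eqref{regularity at 0}, the quantities $u_i$, $\nabla u_i$, $v_i$, $\nabla v_i$, $\Delta v_i$ and $w_i$ are bounded on $\overline{\Omega}\times[0,T]$. Since $v_i\ge0$ is bounded, $\gamma(v_i)\ge k_1>0$, and $\gamma,\gamma',\gamma''$ are Lipschitz on the range of $v_i$. For $w$ I distinguish the two cases. If $\beta\ge1$, inequality \eqref{ineq power p 2} gives $|w_1^\beta-w_2^\beta|\le C|W|$. If $0<\beta<1$, the Duhamel formula \eqref{duhamels for w} together with $u_i\ge0$ yields $w_i\ge e^{-\delta T}\min w_0>0$, so $s\mapsto s^\beta$ is Lipschitz on the relevant range; this is exactly where the hypothesis $w_0>0$ is used. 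In both cases $|w_1^\beta-w_2^\beta|\le C|W|$.

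\textbf{Energy identities.} For the $U$-equation I write
\[
U_t=\Delta\bigl(\gamma(v_1)U\bigr)+\Delta\bigl((\gamma(v_1)-\gamma(v_2))u_2\bigr)+\mu\bigl(f(u_1)-f(u_2)\bigr),
\]
where $f(s)=s(1-s^\theta)(s-A)$ is Lipschitz on bounded sets. I test this equation with $U$ and integrate by parts, which is legitimate because of the Neumann conditions. The principal term gives $-\int\gamma(v_1)|\nabla U|^2$ plus a term bounded by $\int|\gamma'(v_1)||\nabla v_1||U||\nabla U|$. The cross term gives
\[
-\int\nabla\bigl((\gamma(v_1)-\gamma(v_2))u_2\bigr)\cdot\nabla U,
\]
which involves $|V|$ and $|\nabla V|$ multiplied by bounded factors $u_2,\nabla u_2,\nabla v_2$. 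Young's inequality absorbs every $|\nabla U|^2$ contribution into $\frac{k_1}{2}\int|\nabla U|^2$, so that
\[
\frac{d}{dt}\int U^2\le C\int\bigl(U^2+V^2+|\nabla V|^2\bigr).
\]
For $V$ I test $V_t=\Delta V-V+(w_1^\beta-w_2^\beta)$ with $V$ and with $-\Delta V$. This gives
\[
\frac{d}{dt}\int\bigl(V^2+|\nabla V|^2\bigr)+\int|\Delta V|^2\le C\int W^2+C\int V^2 ,
\]
where the term $\int(w_1^\beta-w_2^\beta)\Delta V$ is handled by Young's inequality. Finally $W_t=-\delta W+U$ gives $\frac{d}{dt}\int W^2\le C\int(W^2+U^2)$.

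\textbf{Conclusion and main obstacle.} Summing the three inequalities yields $y'\le Cy$, hence $U=V=W=0$ on $[0,T]$ for every $T<T_{\max}$. The delicate step is the cross-diffusion term $\nabla\cdot(\nabla((\gamma(v_1)-\gamma(v_2))u_2))$. Its treatment requires $\nabla u_2\in L^\infty$ up to $t=0$ and control of $|\nabla V|$ in $L^2$. This is why the regularity \eqref{regularity at 0}, i.e.\ $C^{2+l_4,1+l_4/2}$ up to $t=0$, which follows from \eqref{initial-data-reg} and its compatibility conditions, is assumed rather than only \eqref{eqn-regularity}. If a $\Delta V$ term appears when distributing the derivatives, I would absorb it using the $\int|\Delta V|^2$ dissipation obtained from the $-\Delta V$ test.
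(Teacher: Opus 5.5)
Your proposal is correct and takes essentially the same route as the paper. Both arguments use an $L^2$ energy estimate for $U$, $V$, $\nabla V$ and $W$ built on the $W^{1,\infty}$ bounds up to $t=0$, the Lipschitz bound $|w_1^\beta-w_2^\beta|\le C|W|$ (from the Duhamel lower bound $w_i\ge e^{-\delta T}\min w_0$ when $\beta<1$, and from \eqref{ineq power p 2} when $\beta\ge1$), and Gr\"onwall's lemma with zero initial data. The only difference is cosmetic: you test the $V$-equation with $V$ and $-\Delta V$ rather than with $V_t$, which gives an extra $\int|\Delta V|^2$ dissipation but otherwise closes the estimate in the same way.
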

\begin{proof}
Following the approach of \cite{SIAM,unique2}, we establish the uniqueness of  local solutions. Fix $t_0\in(0,T_{\max})$, and let $(u_1,v_1,w_1)$ and $(u_2,v_2,w_2)$ be two solutions of \eqref{Main} in $\Omega\times(0,T_{\max})$. Define
\begin{align*}
	U = u_1 - u_2,\ V = v_1 - v_2,\ \text{and}\ W = w_1 - w_2,
\end{align*}
and
\begin{align}\label{IC of U,V,W}
	U(0,x)=0,\ V(0,x)=0,\ \text{and}\ W(0,x)=0, \ x\in\Omega. 
\end{align}
In view of the regularity of the solutions $(u_i,v_i,w_i)$, $i=1,2$ in \eqref{regularity at 0}, and the assumptions imposed on $\gamma$ in \eqref{Hypothesis H1}, it follows that
\begin{align}\label{regularity assumptions}
	\|u_1\|_{W^{1,\infty}} + \|u_2\|_{W^{1,\infty}}+ \|v_1\|_{W^{1,\infty}} + \|v_2\|_{W^{1,\infty}} + \|w_1\|_{L^\infty} + \|w_2\|_{L^\infty} \le C_{1},
\end{align}
with
\begin{align}\label{more assump on gamma}
	\gamma(v_1) \ge C_{2} > 0,\ |\gamma'(v_1)| + |\gamma'(v_2)| \le C_{3},\ |\gamma(v_1) - \gamma(v_2)| + |\gamma'(v_1) - \gamma'(v_2)| \le C_{4}|V|. 
\end{align}
  Furthermore, from the system \eqref{Main}, we see that
\begin{align}
		U_t &= \nabla \cdot (\gamma(v_1)\nabla U) + \nabla \cdot ((\gamma(v_1) - \gamma(v_2))\nabla u_2) + \nabla \cdot (\gamma'(v_1)u_1\nabla v_1 - \gamma'(v_2)u_2\nabla v_2)\nonumber\\
		&\quad +\mu\left((u_1 + u_2)U - AU - (u_1^{\theta+2}-u_2^{\theta+2}) + A(u_1^{\theta+1}-u_2^{\theta+1})\right), \label{U eq}
 \\
	V_t &= \Delta V - V + w_1^\beta - w_2^\beta, \label{V equation}
	 \\
	W_t &= -\delta W + U. \label{W equation}
\end{align}
The following result will be used in the sequel.
\begin{align*}
	(u_1^q-u_2^q)=\int_0^1\frac{d}{d\eta}\left(\eta u_1+(1-\eta)u_2\right)^qd\eta =q\int_0^1 \left(\eta u_1+(1-\eta)u_2\right)^{q-1}d\eta(u_1-u_2),
\end{align*}
for all $q\in[1,\infty)$. 

\vskip 0.1cm
\noindent 
\textbf{Step 1:} \emph{A priori estimates for  $U$.}
Multiplying \eqref{U eq} by $U$ and integrating it over spatial domain $\Omega$, we get
\begin{equation*}
	\begin{aligned}
		&\frac{1}{2}\frac{d}{dt}\int_\Omega U^2 + \int_\Omega \gamma(v_1)|\nabla U|^2+(\theta+2) \int_{\Omega}\left(\int_{0}^{1}\left(\eta u_1+(1-\eta)u_2\right)^{\theta+1}\right)U^2+AU^2\nonumber\\&= - \int_\Omega (\gamma(v_1) - \gamma(v_2))\nabla u_2 \cdot \nabla U- \int_{\Omega} (\gamma'(v_1)u_1\nabla v_1 - \gamma'(v_2)u_2\nabla v_2)\cdot \nabla U\\
		&\quad+ \mu \int_\Omega (u_1 + u_2)U^2+ A(\theta+1)\int_{\Omega}\left(\int_{0}^{1}\left(\eta u_1+(1-\eta)u_2\right)^{\theta}\right)U^2\\
		&= I_1 + I_2 + I_3.
	\end{aligned}
\end{equation*}
Then, by using the assumption  $\gamma(v_1) \ge C_{2} > 0$ from \eqref{more assump on gamma}, it entails for all $t \in (0,t_0)$ that 
\begin{align}\label{derivative of U}
	\frac{1}{2}\frac{d}{dt}\int_{\Omega} U^2 + C_{2}\int_{\Omega} |\nabla U|^2 \le I_1 + I_2 + I_3. 
\end{align}
Under the assumptions \eqref{regularity assumptions} and \eqref{more assump on gamma}, an application of Hölder's and Young's inequalities, with $\epsilon=\frac{C_{2}}{8}$, yields, for every $t\in(0,t_0)$,
\begin{align}\label{I1}
		I_1 &= - \int_{\Omega} (\gamma(v_1) - \gamma(v_2))\nabla u_2 \cdot \nabla U \nonumber\\
		&\le C_{1} \int_{\Omega} |\gamma(v_1) - \gamma(v_2)| |\nabla U| \nonumber\\
		&\le C_{1}C_4 \int_{\Omega} |V||\nabla U| \le \frac{C_{2}}{8} \|\nabla U\|_{L^2}^2 + \frac{2C_1^2 C_4^2}{C_{2}} \|V\|_{L^2}^2.
	\end{align}
 Similarly, the terms $I_2$ and $I_3$ can be estimated as 
\begin{align}
		I_2 &= - \int_{\Omega} (\gamma'(v_1)u_1\nabla v_1 - \gamma'(v_2)u_2\nabla v_2)\cdot \nabla U\nonumber\\
		&=- \int_{\Omega} \gamma'(v_1)u_1 \nabla V \cdot \nabla U 
		- \int_{\Omega} (\gamma'(v_1) - \gamma'(v_2))u_1 \nabla v_2 \cdot \nabla U- \int_{\Omega} \gamma'(v_2)U \nabla v_2 \cdot \nabla U\nonumber\\
		&\le C_1C_{3} \int_{\Omega} |\nabla V||\nabla U|
		+ C_1^2 C_4 \int_{\Omega} |V||\nabla U|
		+ C_1C_{3} \int_{\Omega} |U||\nabla U|\nonumber\\
		&\le \frac{C_{2}}{8} \|\nabla U\|_{L^2}^2
		+ \frac{2C_{3}^2 C_1^2}{C_{2}} \|\nabla V\|_{L^2}^2
		+ \frac{C_{2}}{8} \|\nabla U\|_{L^2}^2
		+ \frac{2C_4^2 C_1^4}{C_{2}} \|V\|_{L^2}^2
		+ \frac{C_{2}}{8} \|\nabla U\|_{L^2}^2
		+ \frac{2C_{3}^2 C_1^2}{C_{2}} \|U\|_{L^2}^2\nonumber\\
		&\le \frac{3C_{2}}{8} \|\nabla U\|_{L^2}^2
		+ \frac{2C_{3}^2 C_1^2}{C_{2}} \left( \|U\|_{L^2}^2 + \|\nabla V\|_{L^2}^2 \right)
		+ \frac{2C_4^2 C_1^4}{C_{2}} \|V\|_{L^2}^2,
	\end{align}
and
\begin{align}\label{I2}
		I_3 &= \mu \int_\Omega (u_1 + u_2)U^2+ A(\theta+1)\int_{\Omega}\left(\int_{0}^{1}\left(\eta u_1+(1-\eta)u_2\right)^{\theta}\right)U^2\nonumber\\
		&\le \mu\left(\left(\|u_1\|_{L^\infty} + \|u_2\|_{L^\infty}\right) + A(\theta+1)\left(\|u_1\|_{L^\infty} + \|u_2\|_{L^\infty}\right)^{\theta}\right) \|U\|_{L^2}^2\nonumber\\
		&\le C_{5} \|U\|_{L^2}^2,
\end{align}
where $C_{5}=\mu(C_1 + A(\theta+1)C^{\theta}_1), \theta \geq 1$, and $A\in(0,1).$
On substituting estimates of $I_1, I_2$ and $I_3$ in equation \eqref{derivative of U}, it follows that, for every $t\in(0,t_0)$,
\begin{equation}\label{I1I2I3}
	\begin{aligned}
\frac{d}{dt}\|U\|_{L^2}^2 + C_{2}\|\nabla U\|_{L^2}^2
&\le C_{6} (\|U\|_{L^2}^2 + \|\nabla V\|_{L^2}^2)+ C_{7}\|V\|_{L^2}^2, 
	\end{aligned}
\end{equation}
where
\[
C_{6}=2\max\left\{C_{5},\,\frac{2C_{3}^2C_1^2}{C_{2}}\right\},\
C_{7}=\frac{4C_1^2C_4^2(C_1^2+1)}{C_{2}},
\]
are positive constants.

\vskip 0.1cm
\noindent 
\textbf{Step 2:} \emph{A priori estimates for  $W$ and $V$.}
Testing equation \eqref{W equation} with $W$, we obtain
\[
\frac{1}{2}\frac{d}{dt}\int_{\Omega} |W|^2 + \delta \int_{\Omega} W^2 \le \int_{\Omega} WU\le \frac{1}{2}\int_{\Omega} W^2 + \frac{1}{2}\int_{\Omega} U^2,
\]
\begin{align}\label{derivative of W}
	\frac{d}{dt}\|W\|_{L^2}^2 \le \|U\|_{L^2}^2 + \|W\|_{L^2}^2.
\end{align}
Next, we multiply equation \eqref{V equation} by $V_t$ and integrate over $\Omega$ to find 
\begin{equation*}
	\frac{1}{2}\frac{d}{dt}\left( \int_{\Omega} |V|^2 + \int_{\Omega} |\nabla V|^2 \right) + \int_{\Omega} V_t^2
\le \frac{1}{2}\int_{\Omega} V_t^2 + \frac{1}{2}\int_{\Omega} (w_1^\beta - w_2^\beta)^2,
\end{equation*}
which implies 
\begin{align}\label{deri of V without W}
	\frac{d}{dt}\left( \int_{\Omega} |V|^2 + \int_{\Omega} |\nabla V|^2 \right)+\int_{\Omega}{V_t}^2\leq \int_{\Omega} (w_1^\beta - w_2^\beta)^2,		
\end{align}
for all $t\in (0,t_0).$ We now distinguish two cases, as in the proof of local existence of solutions. 

\emph{Case (i)}: $0 < \beta < 1$ with assumption $w_0>0$ on $\overline{\Omega}$.
Since $w_0\in C^1(\overline{\Omega})$, we have
\[C_0 := \min_{\overline{\Omega}} w_0 > 0.\]
Using the explicit representation of $w$, we have 
\begin{equation*}
	w_i(x,t) = e^{-\delta t} w_0(x) + \int_0^t e^{-\delta(t-s)} u_i(x,s)\,ds
	\;\ge\; e^{-\delta t} w_0(x)
	\;\ge\; e^{-\delta t_0} C_0
	\;=\; C_8 \;>\; 0,
	\label{eq:w_lower_bound}
\end{equation*}
for all $(x,t)\in\overline{\Omega}\times[0,t_0]$ and $i=1,2$. Moreover, by \eqref{regularity assumptions}, we already have $\|w_i\|_{L^\infty}\leq C_1$. By the mean value theorem, there exists $\zeta=\zeta(x,t)$ between $w_1(x,t)$ and $w_2(x,t)$ such that
\[
|w_1^\beta(x,t)-w_2^\beta(x,t)|
=\beta \zeta^{\beta-1}|W(x,t)|.
\]
Since $\zeta\geq C_8$ and $0<\beta<1$, the function $s\mapsto s^{\beta-1}$ is decreasing on $[C_8,C_1]$. Hence,
\begin{align}\label{case 1 w}
	|w^{\beta}_1-w^{\beta}_2|&=\beta    \zeta^{\beta-1}|W|\leq \beta C_8^{\beta-1}|W|\leq C|W|.
\end{align}
Hence, combining \eqref{I1I2I3}, \eqref{derivative of W}, \eqref{deri of V without W} and \eqref{case 1 w}, it follows that
\begin{align}\label{uniqueness ineq}
\frac{d}{dt}\left(\|U\|_{L^2}^2+\|V\|_{L^2}^2+\|\nabla V\|_{L^2}^2+\|W\|_{L^2}^2\right)
\leq C_{9}\left(\|U\|_{L^2}^2+\|V\|_{L^2}^2+\|\nabla V\|_{L^2}^2+\|W\|_{L^2}^2\right),
\end{align}
for all $t\in (0,t_0)$, where
$
C_{9}=\max\{C_6+1, C_7, C^2+1\}.
$

\emph{Case (ii)}: $\beta\geq 1$ with $w_0\geq 0$ on $\overline{\Omega}$. From the equation \eqref{deri of V without W}, we have
\[
\frac{d}{dt}\left(\int_\Omega |V|^2+\int_\Omega |\nabla V|^2\right)
+\int_\Omega V_t^2
\leq  \int_\Omega (w_1^\beta-w_2^\beta)^2,
\ \text{ for all } \ t\in (0,t_0).
\]
Invoking inequality \eqref{ineq power p 2} from Lemma \ref{power inequality}, we express the difference $w_1^\beta-w_2^\beta$ in the form
\[
|w_1^\beta-w_2^\beta|
\leq \beta(w_1+w_2)^{\beta-1}|w_1-w_2|,
\]
with $\beta\geq1$, moreover the regularity assumptions on $w_i$ in \eqref{regularity assumptions} further gives
\begin{align}\label{wi in W}
	|w_1^\beta-w_2^\beta|\leq C|w_1-w_2|,
\end{align}
where $C=\beta(C_1)^{\beta-1}$. Now, using \eqref{wi in W} in \eqref{deri of V without W}, we obtain
\begin{align}\label{deri of V without nonlinearity}
	\frac{d}{dt}\left(\|V\|_{L^2}^2+\|\nabla V\|_{L^2}^2\right)
	\leq C\|W\|_{L^2}^2,
	\ \text{ for all } \ t\in (0,t_0).
\end{align}
Using \eqref{I1I2I3}, \eqref{derivative of W}, and \eqref{deri of V without nonlinearity} once again, we reach at \eqref{uniqueness ineq}.
Finally, applying Gr\"onwall's inequality together with the initial conditions \eqref{IC of U,V,W}, we conclude that $U\equiv 0$, $V\equiv 0$, and $W\equiv 0$ for all $t_0\in(0,T)$. Since $t_0$ was chosen arbitrarily, the uniqueness of the local solution follows. This completes the proof.
\end{proof}

\section{Boundedness under hypothesis \eqref{Hypothesis H1} on the motility function}\label{sec:global1} 
We now establish a collection of a priori estimates that are essential for the
proof of Theorem~\ref{global bound_H1}. More precisely, we derive
$L^q$-estimates for $\nabla v$, $L^\infty$-estimates for $v$, and
$L^p$-estimates for $u$, which ultimately yield the boundedness of
$\|u\|_{L^\infty(\Omega)}$ for all $t\in(0,T_{\max})$ via the
Alikakos-Moser iteration (\cite{Moser}). We begin by deriving uniform $L^1$ and
spatio-temporal estimates for $u$.
	\begin{lemma}
		Under the assumption \eqref{Initial Cond}, there exist some constants $m^*>0$ and $C>0$ such that
		\begin{equation}\label{L1 estimate of u}
			\int_{\Omega} u \le m^*,
			\  \text{ for all }\ t \in (0,T_{\max})
		\end{equation}
		and
		\begin{equation}\label{dobuble integral bound u}
			\int_t^{t+\tau} \int_{\Omega} u^{\theta+2} \le C,
			\  \text{ for all } \ t \in (0,T_{\max}-\tau),
		\end{equation}
		where $\tau<T_{\max}$.
	\end{lemma}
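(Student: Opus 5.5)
Integrating the $u$-equation over $\Omega$ is the natural first step. The homogeneous Neumann condition on $u$ and $v$ gives $\partial_\nu(\gamma(v)u)=\gamma'(v)u\,\partial_\nu v+\gamma(v)\partial_\nu u=0$ on $\partial\Omega$, so the diffusion term integrates to zero and
\[
\frac{d}{dt}\int_\Omega u=\mu\int_\Omega u(1-u^\theta)(u-A)
=\mu\int_\Omega\bigl(u^2-Au-u^{\theta+2}+Au^{\theta+1}\bigr).
\]
I add $\int_\Omega u$ to both sides and absorb the lower-order terms into half of the damping term.

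Since $\theta\ge1$, both $u^2$ and $Au^{\theta+1}$ have exponents strictly below $\theta+2$. Young's inequality then gives
\[
\mu u^2+\mu Au^{\theta+1}+u\le \frac{\mu}{2}u^{\theta+2}+C_\ast,
\qquad u\ge0,
\]
with $C_\ast$ depending only on $\mu,A,\theta$. I discard the term $-\mu Au\le0$. This yields the differential inequality
\[
\frac{d}{dt}\int_\Omega u+\int_\Omega u+\frac{\mu}{2}\int_\Omega u^{\theta+2}\le C_\ast|\Omega|
\quad\text{on }(0,T_{\max}).
\]
Here $\mu>0$ is used; this is the hypothesis of the global theorems, although the paper's standing assumption allows $\mu\in\mathbb{R}$.

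Dropping the nonnegative $u^{\theta+2}$ term and applying the ODE comparison gives
\[
\int_\Omega u\le \max\Bigl\{\int_\Omega u_0,\;C_\ast|\Omega|\Bigr\}=:m^\ast,
\]
which is \eqref{L1 estimate of u}. The same bound also follows from Lemma~\ref{ode comparison} with $\rho\equiv C_\ast|\Omega|$. Absolute continuity of $t\mapsto\int_\Omega u$ on $[0,T_{\max})$ follows from $u\in C^0(\overline\Omega\times[0,T_{\max}))\cap C^{2,1}(\overline\Omega\times(0,T_{\max}))$. Near $t=0$ I integrate the identity from $\varepsilon>0$ and let $\varepsilon\to0$ using continuity of $u$ up to $t=0$.

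For \eqref{dobuble integral bound u}, I integrate the differential inequality over $(t,t+\tau)$ and drop $\int\!\!\int u\ge0$:
\[
\frac{\mu}{2}\int_t^{t+\tau}\!\!\int_\Omega u^{\theta+2}\le \int_\Omega u(\cdot,t)+C_\ast|\Omega|\tau\le m^\ast+C_\ast|\Omega|\tau.
\]
This holds for all $t\in(0,T_{\max}-\tau)$ and gives the claim with $C=\frac{2}{\mu}(m^\ast+C_\ast|\Omega|\tau)$. I would take $\tau=\min\{1,T_{\max}/2\}$ so the constant stays uniform.

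The only delicate points are the boundary-term justification and the regularity needed to differentiate $\int_\Omega u$ in time; the rest is routine Young's inequality, so I expect no real obstacle.
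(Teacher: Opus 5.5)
Your proof is correct and follows essentially the same route as the paper. You integrate the $u$-equation, remove the boundary term using the Neumann conditions, absorb $\mu\int_\Omega u^2$ and $\mu A\int_\Omega u^{\theta+1}$ into half of the $u^{\theta+2}$ damping via Young's inequality, apply an ODE comparison, and integrate over $(t,t+\tau)$. The differences are cosmetic: you create linear damping by adding $\int_\Omega u$ to both sides, whereas the paper keeps $\mu A\int_\Omega u$ on the left and invokes Lemma~\ref{ode comparison}. You are also right to flag that $\mu>0$ is implicitly needed.
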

	\begin{proof}
		Integrating the first equation of \eqref{Main}, we obtain 
		\begin{align*}		
			\frac{d}{dt} \int_{\Omega} u
			&= \int_{\Omega} \Delta\big(\gamma(v)u\big)
			+ \mu \int_{\Omega} u(1-u^\theta)(u-A)
			\\[6pt]
			&= \int_{\partial\Omega} \nabla\big(\gamma(v)u\big)\cdot \nu \, ds
			+ \mu \int_{\Omega} \left(-u^{\theta+2} + Au^{\theta+1} + u^2 - Au\right)
			\\[6pt]
			&= \int_{\partial\Omega}
			\left(\gamma(v)\nabla u \cdot \nu
			+ \gamma'(v)\nabla v \cdot \nu \, u \right) ds
			+ \mu \int_{\Omega} \left(-u^{\theta+2} + Au^{\theta+1} + u^2 - Au\right)
			\\[6pt]
			&= \int_{\partial\Omega}
			\left(
			\gamma(v)\frac{\partial u}{\partial \nu}
			+ \gamma'(v)u \frac{\partial v}{\partial \nu}
			\right) ds
			+ \mu \int_{\Omega} \left(-u^{\theta+2} + Au^{\theta+1} + u^2 - Au\right).
		\end{align*}
		Using the boundary conditions from \eqref{Main}, we deduce
		\begin{align}\label{L1 without Youngs}
			\frac{d}{dt} \int_{\Omega} u + \mu \int_{\Omega}u^{\theta+2} + \mu A \int_{\Omega}u =  \mu A \int_{\Omega}u^{\theta+1}+\mu \int_{\Omega} u^2.
		\end{align}
		By invoking the Young's inequality to estimate the terms on the right-hand side with the choice $\epsilon=\frac{\mu}{4}$, we infer
		\begin{align*}
			A\mu\int_{\Omega}u^{\theta+1}&\leq\left(\int_\Omega u^{\theta+2}\right)^\frac{\theta+1}{\theta+2}\left(\int_{\Omega}(A\mu)^{\theta+2}\right)^\frac{1}{\theta+2}\\
			&\leq \frac{\mu}{4} \int_{\Omega}u^{\theta+2}+\frac{|\Omega|(A\mu)^{\theta+2}}{\theta+2}\left(\frac{4 (\theta+1)}{\mu(\theta+2)}\right)^{\theta+1},
		\end{align*}
		and
		\begin{align*}
			\mu\int_{\Omega}u^2&\leq \left(\int_{\Omega}u^{\theta+2}\right)^{\frac{2}{\theta+2}}\left(\int_{\Omega}\mu^{\frac{\theta+2}{\theta}}\right)^{\frac{\theta}{\theta+2}}\\
			&\leq \frac{\mu}{4}\int_{\Omega}u^{\theta+2}+\frac{|\Omega|\theta\mu^{\frac{\theta+2}{\theta}}}{\theta+2}\left(\frac{8}{\mu(\theta+2)}\right)^{\frac{2}{\theta}}.
		\end{align*}
		By using the above estimates, we can write \eqref{L1 without Youngs} as
		\begin{equation}\label{Holder-2}
			\begin{aligned}
				\frac{d}{dt} \int_{\Omega} u + \frac{\mu}{2}\int_{\Omega}u^{\theta+2} + \mu A \int_{\Omega}u &\leq C^*(\mu,\theta, \abs{\Omega},A).
			\end{aligned}
		\end{equation}
	Now, applying Lemma \ref{ode comparison} to \eqref{Holder-2}, we derive the estimate \eqref{L1 estimate of u}. Subsequently, integrating \eqref{Holder-2} with respect to $s$ over the interval $(t,t+\tau)$ yields
	 
		\begin{equation}\label{double integral u inequality}
			\begin{aligned}
				\int_{t}^{t+\tau}\frac{d}{dt} \int_{\Omega} u + \frac{\mu}{2}	\int_{t}^{t+\tau}\int_{\Omega}u^{\theta+2} &\leq C^*\tau,\  \text{ for all } \ t \in (0,T_{\max}-\tau),
				\\[6 pt]
				\int_\Omega u(\cdot,t+\tau)+ \frac{\mu}{2}	\int_{t}^{t+\tau}\int_{\Omega}u^{\theta+2}&\leq C^*\tau +\int_{\Omega} u(\cdot,t), \  \text{ for all } \ t \in (0,T_{\max}-\tau).
			\end{aligned}
		\end{equation}
		Combining \eqref{double integral u inequality} with \eqref{L1 estimate of u}, we obtain the desired estimate \eqref{dobuble integral bound u}.
		
\end{proof}
Further, we establish the $L^{\theta+2}$-estimate for $w$. Moreover, for $\beta \leq \theta+2$, we derive the $L^1$-estimate for $v$ in the following lemma:

\begin{lemma}\label{L1 estimate of w theta}
	   Under the assumption \eqref{Initial Cond}, there exists a positive constant $C$ such that
	   \begin{equation}\label{w estimate}
	   	\begin{aligned}
	   		\int_{\Omega} w^{\theta+2}(\cdot,t)\leq C, \ \text{for all}\; t\in(0,T_{\max}).
	   	\end{aligned}
	   \end{equation}
	   Furthermore, provided that $\beta\leq\theta+2$, the following estimate holds:
	   \begin{equation}\label{L1 estimate of v}
	   	\begin{aligned}
	   		\int_{\Omega} v(\cdot,t)\leq C, \ \text{for all}\; t\in(0,T_{\max}).
	    		\end{aligned}
	    	\end{equation}
		\end{lemma}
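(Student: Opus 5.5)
Test the $w$-equation with $w^{\theta+1}$, exploiting that it is a pointwise ODE. Since $w_t=-\delta w+u$ holds for every $x$, multiplying by $(\theta+2)w^{\theta+1}$ and integrating over $\Omega$ gives
\[
\frac{d}{dt}\int_\Omega w^{\theta+2}+(\theta+2)\delta\int_\Omega w^{\theta+2}=(\theta+2)\int_\Omega u\,w^{\theta+1}.
\]
We split the right-hand side with Young's inequality (exponents $\theta+2$ and $\frac{\theta+2}{\theta+1}$):
\[
(\theta+2)u\,w^{\theta+1}\le \frac{(\theta+2)\delta}{2}w^{\theta+2}+C(\delta,\theta)\,u^{\theta+2}.
\]
This yields
\[
x'(t)+a\,x(t)\le \rho(t),
\]
with
\[
x(t)=\int_\Omega w^{\theta+2}(\cdot,t),\qquad a=\tfrac{(\theta+2)\delta}{2},\qquad \rho(t)=C\int_\Omega u^{\theta+2}(\cdot,t)\ge 0.
\]
By the spatio-temporal bound \eqref{dobuble integral bound u}, we have $\int_t^{t+\tau}\rho\le b$ uniformly in $t$. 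Lemma \ref{ode comparison} then gives $x(t)\le\max\{\int_\Omega w_0^{\theta+2}+b,\ \frac{b}{a\tau}+2b\}$, which proves \eqref{w estimate}. Absolute continuity of $x$ follows from the regularity $w\in C^{0,1}$ in Theorem \ref{local ex lemma}.

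Next we handle the $L^1$ bound for $v$. Integrating the $v$-equation over $\Omega$ and using the Neumann boundary condition gives
\[
\frac{d}{dt}\int_\Omega v+\int_\Omega v=\int_\Omega w^\beta.
\]
Since $\beta\le\theta+2$, Hölder's inequality on the bounded domain gives
\[
\int_\Omega w^\beta\le |\Omega|^{1-\frac{\beta}{\theta+2}}\Big(\int_\Omega w^{\theta+2}\Big)^{\frac{\beta}{\theta+2}},
\]
which is bounded by the first part. Solving the linear ODE (or using Grönwall's inequality) then yields $\int_\Omega v\le \int_\Omega v_0+C$, using that $v\ge0$.

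The main point needing care is the uniform-in-time control of the $u^{\theta+2}$ source term. We only have this control in the time-averaged sense \eqref{dobuble integral bound u}, which is exactly why we invoke Lemma \ref{ode comparison} rather than a plain Grönwall argument. When $T_{\max}\le\tau$, the window condition is vacuous or trivial, and the bound follows directly by integrating over $(0,t)$.
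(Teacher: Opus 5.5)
Your proposal is correct and follows the paper's proof essentially step by step. You test the $w$-equation with $(\theta+2)w^{\theta+1}$, absorb the cross term by Young's inequality, and apply Lemma~\ref{ode comparison} using the space-time bound \eqref{dobuble integral bound u}; you then integrate the $v$-equation over $\Omega$. The only cosmetic difference is that you bound $\int_\Omega w^\beta$ by H\"older's inequality and solve the linear ODE explicitly, whereas the paper uses Young's inequality and invokes Lemma~\ref{ode comparison} a second time.
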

	\begin{proof}
		Upon multiplying the third equation in \eqref{Main} by $(\theta+2)w^{\theta+1}$ and employing Young's inequality, we obtain 
		\begin{equation*}
				\begin{aligned}
				\frac{d}{dt} \int_{\Omega} w^{\theta+2} + (\theta+2)\delta \int_{\Omega}w^{\theta+2} &= (\theta+2) \int_{\Omega} u w^{\theta+1}\\
				&\leq (\theta+2)\left[\left(\int_{\Omega}w^{\theta+2}\right)^{\frac{\theta+1}{\theta+2}} \left(\int_{\Omega}u^{\theta+2}\right)^{\frac{1}{\theta+2}}\right]\\
				&\leq \frac{(\theta+2)\delta}{2}\int_{\Omega}w^{\theta+2}+\left[\frac{2(\theta+1)}{\delta(\theta+2)}\right]^{\theta+1}\int_{\Omega}u^{\theta+2},
			\end{aligned}
		\end{equation*} 
		which gives
		 \begin{equation*}
		 	\begin{aligned}
		 		\frac{d}{dt} \int_{\Omega} w^{\theta+2} + \frac{(\theta+2)\delta}{2} \int_{\Omega}w^{\theta+2}&\leq C(\theta,\delta) \int_{\Omega} u^{\theta+2}.
		 	\end{aligned}
		 \end{equation*}
	In view of \eqref{dobuble integral bound u}, which ensures that $u\in L^{\theta+2}((t,t+\tau)\times\Omega)$, and by applying Lemma~\ref{ode comparison}, we obtain \eqref{w estimate}.
	
	 In the case $\beta=\theta+2$, integrating the second equation in \eqref{Main} with respect to the spatial variable $x\in\Omega$, we obtain
		\begin{equation}\label{L1 estimate of v 1}
			\begin{aligned}
				\frac{d}{dt} \int_{\Omega} v + \int_{\Omega}v = \int_{\Omega} w^{\theta+2}, 
			\end{aligned}
		\end{equation}
		 for all $ t\in(0,T_{\max})$. If $\beta<\theta+2$, a similar calculation by using Young's inequality provides us to select a positive constant $c_1$ satisfying 
			\begin{equation}\label{L1 estimate of v 2}
			\begin{aligned}
				\frac{d}{dt} \int_{\Omega} v + \int_{\Omega}v = \int_{\Omega} w^{\beta}\leq \int_{\Omega} w^{\theta+2}+c_1, 
			\end{aligned}
		\end{equation}
	 for all $t\in(0,T_{\max})$. Employing \eqref{w estimate}, together within \eqref{L1 estimate of v 1} and \eqref{L1 estimate of v 2}, and invoking Lemma~\ref{ode comparison}, we deduce the estimate \eqref{L1 estimate of v}.
	 \end{proof}
The $L^1$-bound obtained in Lemma~\ref{L1 estimate of v} provides the necessary groundwork for establishing the boundedness of $v$. We next derive the required uniform bounds for $v$.
\begin{lemma}\label{Lq norm of v and grad v}
		Assume that \eqref{Initial Cond} and \eqref{Hypothesis H1} hold. If
		$\beta < \frac{2(\theta+2)}{d}$, then there exist $q>d$ and $C>0$ such that
		\begin{equation}\label{Lq estimate grad v}
			\|\nabla v(\cdot,t)\|_{L^q(\Omega)} \le C,
		\end{equation}
		and
		\begin{equation}\label{Linfty est of v}
			\|v(\cdot,t)\|_{L^\infty(\Omega)} \le C,
		\end{equation}
		for every $t \in (0,T_{\max})$.
		\end{lemma}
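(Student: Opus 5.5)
We will combine the uniform $L^{\theta+2}$ bound on $w$ from Lemma~\ref{L1 estimate of w theta} with the gradient estimate of Lemma~\ref{beta k condition}, taking $k=\theta+2$. The assumption $\beta<\frac{2(\theta+2)}{d}$ is used to produce an admissible exponent $q>d$.

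\textbf{Step 1 (gradient bound).} By \eqref{w estimate}, $\sup_{0<t<T_{\max}}\|w(\cdot,t)\|_{L^{\theta+2}(\Omega)}\le C$. We apply Lemma~\ref{beta k condition} with $k=\theta+2$ and $T=T_{\max}$.

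This lemma requires $\beta\le k$. Since $d\ge2$, we have $\beta<\frac{2(\theta+2)}{d}\le\theta+2$, so the requirement holds.

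If $\beta<\frac{\theta+2}{d}$, every $m\in[1,\infty]$ is admissible, and we simply take any $q>d$ (even $q=\infty$).

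If $\frac{\theta+2}{d}\le\beta<\frac{2(\theta+2)}{d}$, the admissible range is $m<\frac{d(\theta+2)}{\beta d-(\theta+2)}$. We have
\[
\frac{d(\theta+2)}{\beta d-(\theta+2)}>d \iff \theta+2>\beta d-(\theta+2)\iff \beta<\frac{2(\theta+2)}{d},
\]
which is exactly the hypothesis. So we can choose $q\in\bigl(d,\frac{d(\theta+2)}{\beta d-(\theta+2)}\bigr)$. Then \eqref{grad v 1st cond} gives $\|\nabla v(\cdot,t)\|_{L^q(\Omega)}\le C(1+C^\beta)$ for all $t\in(0,T_{\max})$, which is \eqref{Lq estimate grad v}.

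The constant in Lemma~\ref{beta k condition} must not depend on $T$. I would check this from its proof: the time integral is bounded by $\int_0^\infty e^{-(\lambda_1+1)s}\bigl(1+s^{-\frac12-\frac d2(\frac{\beta}{q'}... )}\bigr)ds$, and the initial term by $\|\nabla v_0\|_{L^q}$ (or $\|\nabla v_0\|_{L^\infty}$ via Lemma~\ref{Neumann est infty case}). Both are finite because $v_0\in C^1(\overline{\Omega})$.

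\textbf{Step 2 ($L^\infty$ bound).} Since $\beta<\theta+2$, Lemma~\ref{L1 estimate of w theta} gives $\|v(\cdot,t)\|_{L^1(\Omega)}\le C$. Because $q>d$, the Gagliardo--Nirenberg inequality (Lemma~\ref{GNI} with $m=1$, target $p=\infty$, $r=1$) applies with some $\vartheta\in(0,1)$ satisfying $0=\vartheta(\frac1q-\frac1d)+(1-\vartheta)$. Alternatively one can use Morrey's embedding $W^{1,q}\hookrightarrow L^\infty$ together with the Poincaré-type bound $\|v\|_{L^q}\le C(\|\nabla v\|_{L^q}+\|v\|_{L^1})$. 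Either way,
\[
\|v\|_{L^\infty}\le C\|\nabla v\|_{L^q}^{\vartheta}\|v\|_{L^1}^{1-\vartheta}+C\|v\|_{L^1}\le C,
\]
which is \eqref{Linfty est of v}.

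\textbf{Main obstacle.} The main delicacy is the exponent bookkeeping in Step 1, namely verifying that the window $(d,\frac{d(\theta+2)}{\beta d-(\theta+2)})$ is nonempty precisely under $\beta<\frac{2(\theta+2)}{d}$. A secondary point is making sure the constant from the semigroup estimate is uniform in time. Both are routine once the exponents are written out.
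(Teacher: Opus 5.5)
Your proposal is correct and follows essentially the same route as the paper. You apply Lemma~\ref{beta k condition} with $k=\theta+2$ together with the uniform $L^{\theta+2}$ bound on $w$, which gives $\|\nabla v\|_{L^q}\le C$ for some $q>d$ precisely because $\frac{d(\theta+2)}{\beta d-(\theta+2)}>d \iff \beta<\frac{2(\theta+2)}{d}$. Then the $L^1$ bound on $v$, a Gagliardo--Nirenberg/Poincar\'e step bounding $\|v\|_{L^q}$, and Morrey's embedding $W^{1,q}\hookrightarrow L^\infty$ give the $L^\infty$ bound. You also check two things the paper leaves implicit: that $\beta\le\theta+2$, which is needed both for Lemma~\ref{beta k condition} and for the $L^1$ bound on $v$, and that the constant is uniform in $T$. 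One small point: the paper's Lemma~\ref{GNI} is stated only for finite target exponents, so your Morrey-plus-Poincar\'e variant is the cleaner way to close Step 2, and it is in fact what the paper does.
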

		
		\begin{proof}
			From \eqref{beta cond for grad v} and \eqref{grad v 1st cond}, choosing $k=\theta+2,$ and $\beta < \frac{\theta+2}{d}$, we have
			\begin{equation}\label{Linfty of grad v}
				\|\nabla v(\cdot,t)\|_{L^\infty(\Omega)} \le c_1 \ \text{ for all}\ t\in (0,T_{\max}),
			\end{equation}
		 with $c_1>0$. Moreover, for $\beta \ge \frac{\theta+2}{d},$ we observe 
		\[	\|\nabla v(\cdot,t)\|_{L^{q}(\Omega)}
		\le C \left( 1 + \sup_{0<t<T_{\max}} \|w(\cdot,t)\|_{L^{\theta+2}(\Omega)}^\beta \right).\]	
	Now, to obtain such $q>d$, from \eqref{beta cond for grad v}, it must satisfy the condition $d<\frac{d(\theta+2)}{\beta d-(\theta+2)},$ which is ensured by $\beta<\frac{2(\theta+2)}{d},$ therefore we can pick such $q>d$ and $c_2>0$ such that
			\begin{align}\label{grad v with Lq norm}
			\|\nabla v(\cdot,t)\|_{L^{q}(\Omega)} \le c_2,
			\end{align}
			for all $t \in (0,T_{\max})$, which yields \eqref{Lq estimate grad v}. Combining the Gagliardo-Nirenberg inequality \eqref{GNI} with estimates \eqref{L1 estimate of v} and \eqref{Lq estimate grad v} yields
			\[
			\|v(\cdot,t)\|_{L^q(\Omega)}
			\le c_3 
			\|\nabla v(\cdot,t)\|_{L^q(\Omega)}^{
				\frac{1-\frac{1}{q}}{1+\frac{1}{d}-\frac{1}{q}}
			}
			\|v(\cdot,t)\|_{L^1(\Omega)}^{
				\frac{\frac{1}{d}}{1+\frac{1}{d}-\frac{1}{q}}
			}
			+ c_3 \|v(\cdot,t)\|_{L^1(\Omega)}
			\le c_4,
			\]
	for all $t \in (0,T_{\max})$, where $c_3, c_4$ denote positive constants. Together with \eqref{grad v with Lq norm}, this allows us to choose a constant $c_5>0$ such that
			\[
			\|v(\cdot,t)\|_{W^{1,q}(\Omega)} \le c_5,
			\]
			for all $t \in (0,T_{\max})$.
			According to the fact that $q>d$, we have \eqref{Linfty est of v} by using the Sobolev embedding \cite[§5.6.2, Theorem 6 (Morrey's inequality), and §5.6.3 (General Sobolev inequalities)]{evans}.
		\end{proof}
We now apply Lemma~\ref{Lq norm of v and grad v} to derive the desired $L^p$-bound for $u$.
\begin{lemma}\label{Lp estimate of u 1}
		Assume that \eqref{Initial Cond} and \eqref{Hypothesis H1} are satisfied. If
		$\beta<\frac{2(\theta+2)}{d}$, then, for each $p>1$, there exists a positive constant $C$ such that
		\[
		\|u(\cdot,t)\|_{L^p(\Omega)} \le C,\quad \text{for all}\; t\in(0,T_{\max}).
		\]
\end{lemma}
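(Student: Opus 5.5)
We plan a direct $L^p$ energy estimate for $u$, using Lemma \ref{Lq norm of v and grad v} to control the motility. Since $\|v(\cdot,t)\|_{L^\infty(\Omega)}\le C$ for all $t\in(0,T_{\max})$ and $\gamma$ is positive and nonincreasing by \eqref{Hypothesis H1}, there are constants $0<\gamma_1\le\gamma(v)\le\gamma(0)$ and $|\gamma'(v)|\le\gamma_2$ along the solution. Hence the $u$-equation is uniformly parabolic along the trajectory, and the degeneracy disappears.

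We write $\Delta(\gamma(v)u)=\nabla\cdot(\gamma(v)\nabla u+\gamma'(v)u\nabla v)$ and test with $pu^{p-1}$, using the Neumann conditions. This gives
\begin{align*}
\frac{d}{dt}\int_\Omega u^p+p(p-1)\int_\Omega\gamma(v)u^{p-2}|\nabla u|^2
&=-p(p-1)\int_\Omega\gamma'(v)u^{p-1}\nabla u\cdot\nabla v\\
&\quad+p\mu\int_\Omega u^p(1-u^\theta)(u-A).
\end{align*}
We bound the cross term by Young's inequality:
\begin{align*}
\frac{p(p-1)\gamma_1}{2}\int_\Omega u^{p-2}|\nabla u|^2+\frac{p(p-1)\gamma_2^2}{2\gamma_1}\int_\Omega u^p|\nabla v|^2 .
\end{align*}
The first piece is absorbed by the dissipation. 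The reaction term is at most $p\mu\int_\Omega(u^{p+1}+Au^{p+\theta})-p\mu\int_\Omega u^{p+\theta+1}$. Young's inequality then gives
\begin{align*}
p\mu\int_\Omega u^p(1-u^\theta)(u-A)\le-\frac{p\mu}{2}\int_\Omega u^{p+\theta+1}+C_p .
\end{align*}

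The key step is to estimate $\int_\Omega u^p|\nabla v|^2$. By Hölder's inequality with the exponent $q>d$ from \eqref{Lq estimate grad v},
\begin{align*}
\int_\Omega u^p|\nabla v|^2\le\|\nabla v\|_{L^q(\Omega)}^2\,\|u\|_{L^{\frac{pq}{q-2}}(\Omega)}^p\le C\,\|u^{p/2}\|_{L^{\frac{2q}{q-2}}(\Omega)}^2 .
\end{align*}
(If $q$ were at most $2$, the same argument would work with $q$ replaced by a larger admissible value; since $q>d\ge2$, this is fine.) Because $q>d$, we have $\frac{2q}{q-2}<\frac{2d}{d-2}$. So the Gagliardo--Nirenberg inequality (Lemma \ref{GNI}) applies with $\varphi=u^{p/2}$, the $L^{2/p}$-norm controlled by the mass bound \eqref{L1 estimate of u}, and an interpolation exponent $\vartheta<1$. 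Young's inequality then yields
\begin{align*}
\int_\Omega u^p|\nabla v|^2\le\varepsilon\int_\Omega|\nabla u^{p/2}|^2+C_\varepsilon .
\end{align*}
We absorb the gradient part into the remaining dissipation $\frac{p(p-1)\gamma_1}{2}\cdot\frac{4}{p^2}\int_\Omega|\nabla u^{p/2}|^2$.

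Alternatively, and more simply, one can avoid Gagliardo--Nirenberg altogether and use the strong damping. In the case $\beta<\frac{\theta+2}{d}$ we have $\nabla v\in L^\infty$ by \eqref{Linfty of grad v}, so $\int_\Omega u^p|\nabla v|^2\le C\int_\Omega u^p$. This term is dominated by $\frac{p\mu}{4}\int_\Omega u^{p+\theta+1}$ plus a constant. In either case we reach
\begin{align*}
\frac{d}{dt}\int_\Omega u^p+\int_\Omega u^p\le C .
\end{align*}
Here $\int_\Omega u^p$ is bounded by $\int_\Omega u^{p+\theta+1}+C$. Lemma \ref{ode comparison} (or an ODE comparison) then gives the uniform bound for every $p>1$, with $p<2$ following by Hölder's inequality. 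The main technical obstacle is the interpolation step in the general case, where $\nabla v$ lies only in $L^q$ with $q>d$. There one must verify that the Gagliardo--Nirenberg exponent satisfies $\vartheta<1$ and that the resulting power of $\|\nabla u^{p/2}\|_{L^2(\Omega)}$ is at most $2$. I expect this to hold exactly because $\frac{2q}{q-2}<\frac{2d}{d-2}$ when $q>d$.
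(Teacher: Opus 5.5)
Your proposal is correct and follows the paper's proof essentially step for step: uniform parabolicity from $\|v\|_{L^\infty(\Omega)}\le C$, testing with $u^{p-1}$, Young on the cross term, H\"older with $\|\nabla v\|_{L^q(\Omega)}$ for $q>d$, then Gagliardo--Nirenberg, Young, and Lemma \ref{ode comparison}. The only variation is in the interpolation step: the paper interpolates against $\|u^{p/2}\|_{L^2(\Omega)}$ and absorbs the resulting $C\int_\Omega u^p$ into the damping $\frac{\mu}{4}\int_\Omega u^{\theta+p+1}$, whereas you interpolate against the mass $\|u^{p/2}\|_{L^{2/p}(\Omega)}$, which for $p>2$ has exponent $2/p<1$ and therefore relies on the standard extension of Gagliardo--Nirenberg to $r\in(0,1)$ rather than on Lemma \ref{GNI} as stated (using the paper's $L^2$ version avoids this).
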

		\begin{proof}
		Owing to \eqref{Hypothesis H1} and \eqref{Linfty est of v}, we know that $v\in L^\infty(\Omega\times(0,T_{\max}))$. Hence, there exists a constant $K>0$ such that $0\leq v(x,t)\leq K$ for all $(x,t)\in\Omega\times(0,T_{\max})$. Since $\gamma\in C^3([0,\infty))$, both $\gamma$ and $\gamma'$ are continuous on the compact interval $[0,K]$, and consequently, they are bounded on $[0,K]$. Hence there exist positive constants $k_1,k_2>0$ such that 
			
			\begin{equation}\label{k1 lower bound on gamma}
				\gamma(v(\cdot,t))\geq k_1, \ \text{for all}\; t\in(0,T_{\max}),
			\end{equation}
			and
			\begin{equation}\label{bound on gamma'}
				\abs{\gamma'(v(\cdot,t))}\leq k_2, \ \text{for all}\; t\in(0,T_{\max}).
			\end{equation}
			Testing the first equation of \eqref{Main} by $u^{p-1}$ and integrating over the spatial variable $x\in \Omega$, it gives 
			
			\begin{align}\label{testing with u powerp-1}
					\frac{1}{p}\frac{d}{dt}\int_{\Omega} u^p 
					&= -(p-1)\int_{\Omega} u^{p-2}\gamma(v)|\nabla u|^2-(p-1)\int_{\Omega} u^{p-1}\gamma'(v)\nabla u \cdot \nabla v\nonumber\\
					&\quad+\mu \int_{\Omega}(-u^{\theta+p+1}+Au^{\theta+p}-Au^p+u^{p+1}).
			\end{align}
			Using \eqref{k1 lower bound on gamma} and \eqref{bound on gamma'}, we obtain
			\begin{align*}
						\frac{1}{p}\frac{d}{dt}\int_{\Omega} u^p&+\mu\int_{\Omega}u^{\theta+p+1}+ \mu A\int_{\Omega}u^p
						+(p-1)k_1\int_{\Omega}u^{p-2}|\nabla u|^2\nonumber\\&\leq k_2(p-1)\int_{\Omega}u^{p-1}\abs{\nabla u}\abs{\nabla v}+ \mu\int_{\Omega}(Au^{\theta+p}+u^{p+1}).
				\end{align*}
			By using Young's inequality, this leads to
			\[\mu \int_{\Omega} u^{p+1} \leq \left(\int_{\Omega} u^{\theta+p+1}\right)^{\frac{p+1}{\theta+p+1}} \left(\int_{\Omega} \mu^{\frac{\theta+p+1}{\theta}}\right)^{\frac{\theta}{\theta+p+1}}
			\leq \frac{\mu}{4} \int_{\Omega} u^{\theta+p+1} + {C_1}|\Omega|\mu^{\frac{\theta+p+1}{\theta}},\] 
			and
			\[A\mu \int_{\Omega} u^{\theta+p} \leq \left(\int_{\Omega} u^{\theta+p+1}\right)^{\frac{\theta+p}{\theta+p+1}} \left(\int_{\Omega} (A\mu)^{\theta+p+1}\right)^{\frac{1}{\theta+p+1}}
			\leq \frac{\mu}{4} \int_{\Omega} u^{\theta+p+1} + {C_2}|\Omega|(A\mu)^{\theta+p+1},\]
			where ${C_1}=\left(\frac{\theta}{\theta+p+1}\right)\left(\frac{4(p+1)}{\mu(\theta+p+1)}\right)^\frac{p+1}{\theta},$	and ${C_2}=\left(\frac{1}{\theta+p+1}\right)\left(\frac{4(\theta+p)}{\mu(\theta+p+1)}\right)^{\theta+p}.$ Furthermore, the above estimates yield
			\begin{equation}\label{3.40}
				\begin{aligned}
					\frac{1}{p}\frac{d}{dt}\int_{\Omega} u^p+\frac{\mu}{2}\int_{\Omega}u^{\theta+p+1}&+\mu A\int_{\Omega}u^p
					+(p-1)k_1\int_{\Omega}u^{p-2}|\nabla u|^2\\
					&\leq k_2(p-1)\int_{\Omega}u^{p-1}\abs{\nabla u}\abs{\nabla v}+{C_3},
				\end{aligned}
			\end{equation}
		where ${C_3}={C_1}|\Omega|\mu^{\frac{\theta+p+1}{\theta}}+ {C_2}|\Omega|(A\mu)^{\theta+p+1}.$ Now, consider the right hand side term and estimate it as follows:
		\begin{equation}\label{3.41}
		\begin{aligned}
				k_2(p-1)\int_{\Omega} u^{p-1}|\nabla u||\nabla v|
				&= k_2(p-1)\int_{\Omega} u^{\frac{p-2}{2}}|\nabla u|\,|\nabla v|\,u^{\frac{p}{2}} \\
				&\le (p-1)
				\left(
				\int_{\Omega} u^{p-2}|\nabla u|^2
				\right)^{\frac12}
				\left(
				\int_{\Omega} k^{2}_2|\nabla v|^2 u^p
				\right)^{\frac12}\\
				&\leq\frac{(p-1)k_1}{2}\int_{\Omega} u^{p-2}|\nabla u|^2+\frac{k^{2}_2 (p-1)}{2k_1}\int_{\Omega} u^{p}|\nabla v|^2.
		\end{aligned}
		\end{equation}
		From \eqref{3.40} and \eqref{3.41}, it follows that
			\begin{equation*}
				\begin{aligned}
						\frac{1}{p}\frac{d}{dt}\int_{\Omega} u^p+\mu A\int_{\Omega}u^p+\frac{\mu}{2}\int_{\Omega}u^{\theta+p+1}
					&+\frac{(p-1)k_1}{2}\int_{\Omega}u^{p-2}|\nabla u|^2\\
					&\leq \frac{k^{2}_2 (p-1)}{2k_1}\int_{\Omega} u^{p}|\nabla v|^2	+{C_3}.
				\end{aligned}
			\end{equation*}
	Moreover, observing that
	$\frac{(p-1)k_1}{2}\int_{\Omega}u^{p-2}|\nabla u|^2
	=\frac{2(p-1)k_1}{p^2}\int_{\Omega}|\nabla u^{p/2}|^2$,
	we obtain
		\begin{equation}\label{3.42}
		\begin{aligned}
			\frac{1}{p}\frac{d}{dt}\int_{\Omega} u^p+\mu A\int_{\Omega}u^p+\frac{\mu}{2}\int_{\Omega}u^{\theta+p+1}+\frac{2(p-1)k_1}{p^2}\int_{\Omega}|\nabla u^{p/2}|^2
			\leq \frac{k^{2}_2 (p-1)}{2k_1}\int_{\Omega} u^{p}|\nabla v|^2+{C_3}.
		\end{aligned}
	\end{equation}
	To estimate the right-hand side of the above equation, we invoke Lemma \ref{Lq norm of v and grad v} and choose \(q_1>d\) such that
			\begin{equation*}
			\|\nabla v(\cdot,t)\|_{L^{q_1}(\Omega)} \le {C_4},
		\end{equation*}
	for all $t \in (0,T_{\max})$. By employing Hölder's inequality and the Gagliardo-Nirenberg inequality, followed by Young's inequality with the choice $\epsilon=\frac{k_1(p-1)}{p^2}$, we proceed as follows:
		\begin{equation}\label{3.43}
			\begin{aligned}
				\frac{k_2^2 (p-1)}{2k_1}\int_{\Omega} u^p |\nabla v|^2
				&\le \frac{k_2^2 (p-1)}{2k_1}
				\left(\int_{\Omega} |\nabla v|^{q_1}\right)^{\frac{2}{q_1}}
				\left(\int_{\Omega} u^{\frac{pq_1}{q_{1}-2}}\right)^{\frac{q_{1}-2}{q_1}} \\
				&\le {C_5} \left\|u^{\frac{p}{2}}\right\|_{L^{\frac{2q_1}{q_{1}-2}}(\Omega)}^2 \\
				&\le {C_6} \left\|\nabla u^{\frac{p}{2}}\right\|_{L^2(\Omega)}^{\frac{2d}{q_1}}
				\left\|u^{\frac{p}{2}}\right\|_{L^2(\Omega)}^{\frac{2(q_{1}-d)}{q_1}}
				+ {C_6} \left\|u^{\frac{p}{2}}\right\|_{L^2(\Omega)}^2 \\
				&\le\left(\|\nabla u^{\frac{p}{2}}\|_{L^2(\Omega)}^2\right)^{\frac{d}{q_1}}\left({C_6}^{\frac{q_1}{q_{1}-d}}\|u^{\frac{p}{2}}\|_{L^2(\Omega)}^2\right)^{\frac{q_{1}-d}{q_1}}+{C_6} \left\|u^{\frac{p}{2}}\right\|_{L^2(\Omega)}^2\\
				&\le \frac{k_1(p-1)}{p^2}
				\left\|\nabla u^{\frac{p}{2}}\right\|_{L^2(\Omega)}^2
				+ {C_7}\int_{\Omega} u^p.
			\end{aligned}
		\end{equation}
It follows from \eqref{3.42} and \eqref{3.43} that
\begin{equation}\label{3.45}
	\begin{aligned}
		\frac{1}{p}\frac{d}{dt}\int_{\Omega} u^p+\mu A\int_{\Omega}u^p+\frac{\mu}{2}\int_{\Omega}u^{\theta+p+1} \leq {C_7}\int_{\Omega}u^p+{C_3},
	\end{aligned}
\end{equation}
applying Young's inequality to the term ${C_7}\int_{\Omega}u^p$, we find
\begin{equation}\label{youngs in u^p}
	\begin{aligned}
		{C_7}\int_{\Omega}u^p&\leq\left(\int_{\Omega}u^{\theta+p+1}\right)^{\frac{p}{\theta+p+1}}\left(\int_{\Omega}{C_7}^{\frac{\theta+p+1}{\theta+1}}\right)^{\frac{\theta+1}{\theta+p+1}}\\
		&\leq\frac{\mu}{4}\int_{\Omega}u^{\theta+p+1}+|\Omega|{C_8}{C_7}^{\frac{\theta+p+1}{\theta+1}},
	\end{aligned}
\end{equation} 
where ${C_8}=\frac{\theta+1}{\theta+p+1}\left(\frac{4p}{\mu (\theta+p+1)}\right)^{\frac{p}{\theta+1}}$. Thus, from \eqref{3.45} and \eqref{youngs in u^p}, we obtain
\begin{equation*}
	\begin{aligned}
		\frac{d}{dt}\int_{\Omega} u^p+\mu Ap\int_{\Omega}u^p+\frac{\mu p}{4}\int_{\Omega}u^{\theta+p+1} \leq C(p),
	\end{aligned}
\end{equation*}
where $C(p)=\left({C_3}+|\Omega|{C_8}{C_7}^{\frac{\theta+p+1}{\theta+1}}\right)p.$ Thus, by applying  Lemma \ref{ode comparison}, we obtain the desired $L^p$ estimate for $u$.
\end{proof}
Lemma \ref{Lp estimate of u 1} provides uniform $L^p(\Omega)$-bounds for $u$ for every $p>1$. Building on these estimates, we next derive an $L^\infty(\Omega)$-bound for $u$ by applying the Alikakos--Moser iteration technique \cite{Moser,M1,M3,M2}.
\begin{lemma}\label{Linfty bound of u}
	Assume that \eqref{Initial Cond} and \eqref{Hypothesis H1} are satisfied. Then there exists a constant $C>0$ such that
	\begin{equation}\label{uniform bound C on u}
		\|u(\cdot,t)\|_{L^\infty(\Omega)} \leq C,\ \text{for all } t \in (0,T_{\max}).
	\end{equation}
\end{lemma}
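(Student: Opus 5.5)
We will upgrade the uniform $L^p$-bounds of Lemma~\ref{Lp estimate of u 1} to an $L^\infty$-bound. We work in the setting of that lemma, namely $\beta<\frac{2(\theta+2)}{d}$, so that Lemma~\ref{Lq norm of v and grad v} is also available. Two routes are possible. We first attempt the short one, which uses a strong gradient bound for $v$, and keep an Alikakos--Moser iteration as the general fallback.

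\emph{Step 1: a gradient bound for $v$.} By Lemma~\ref{Lp estimate of u 1}, $\sup_t\|u\|_{L^p}\le C$ for every finite $p$. Testing the $w$-equation with $p w^{p-1}$ and applying Lemma~\ref{ode comparison} gives $\sup_t\|w\|_{L^p}\le C$ for all $p$. We then apply Lemma~\ref{beta k condition} with $k$ chosen so large that $\beta<k/d$. This yields $\|\nabla v(\cdot,t)\|_{L^\infty(\Omega)}\le C$ on $(0,T_{\max})$. Combined with \eqref{Linfty est of v} and \eqref{k1 lower bound on gamma}--\eqref{bound on gamma'}, the $u$-equation can then be written in divergence form,
\[
u_t=\nabla\cdot(\gamma(v)\nabla u+\gamma'(v)u\nabla v)+\mu u(1-u^\theta)(u-A).
\]
Here the coefficients are bounded, the diffusion is bounded below by $k_1>0$, and the drift $\gamma'(v)\nabla v$ lies in $L^\infty$.

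\emph{Step 2: the Moser iteration.} For $p_k=2^k p_0$ we test with $u^{p_k-1}$, exactly as in \eqref{testing with u powerp-1}. Since $\nabla v\in L^\infty$, Young's inequality bounds the cross term by $\frac{(p-1)k_1}{2}\int u^{p-2}|\nabla u|^2+Cp\int u^p$. The Allee source is handled by discarding $-\mu\int u^{\theta+p+1}$ after absorbing $\mu\int(Au^{\theta+p}+u^{p+1})$ into it, as done before; a residual constant of size $C^{p}$ remains. The Gagliardo--Nirenberg inequality (Lemma~\ref{GNI}) applied to $u^{p/2}$, interpolating between $\|\nabla u^{p/2}\|_{L^2}$ and $\|u^{p/2}\|_{L^1}=\int u^{p/2}$, gives
\[
\frac{d}{dt}\int u^{p_k}+\int u^{p_k}\le C p_k^{\sigma}\Bigl(\sup_{t}\int u^{p_{k-1}}\Bigr)^2+C^{p_k},
\]
with $\sigma>0$ independent of $k$. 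By Lemma~\ref{ode comparison}, $M_k:=\max\{1,\|u_0\|_\infty,\sup_t\|u\|_{L^{p_k}}\}$ satisfies $M_k\le (Ca^k)^{1/p_k}M_{k-1}$. Since $\sum k/2^k<\infty$, the product converges, and $\sup_k M_k<\infty$. Letting $k\to\infty$ yields \eqref{uniform bound C on u}.

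\emph{Alternative for Step 2.} One could instead use a semigroup argument: write $u_t=\Delta(\gamma u)+f$ and apply the Neumann heat semigroup bounds of Lemma~\ref{Neumann estimates}. Because of the variable coefficient $\gamma(v)$, however, the iteration above is cleaner.

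\emph{Main obstacle.} The hard part is Step~1, obtaining $\nabla v\in L^\infty$ uniformly in time. It relies on upgrading the $L^p$-bounds of $w$ through the ODE, and then on the $m=\infty$ case of Lemma~\ref{beta k condition} via Lemma~\ref{Neumann est infty case}. If only $\nabla v\in L^{q}$ with $q>d$ were available, the iteration would still close. In that case the cross term is treated as in \eqref{3.43}, using Hölder with $q_1>d$ and Gagliardo--Nirenberg, but the dependence of the constants on $p$ must be tracked to be polynomial. That bookkeeping is the step requiring most care.
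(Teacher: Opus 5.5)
Your proposal is correct, but it reaches the Moser iteration by a somewhat different route from the paper. The paper never improves on the bound $\|\nabla v\|_{L^{q_2}}\le C$ with $q_2\in\bigl(d,\frac{d(\theta+2)}{\beta d-(\theta+2)}\bigr)$ from Lemma~\ref{Lq norm of v and grad v}. It estimates $p^2\int_\Omega u^p|\nabla v|^2$ directly by H\"older and Gagliardo--Nirenberg between $\|\nabla u^{p/2}\|_{L^2}$ and $\|u^{p/2}\|_{L^1}$, tracking the $p$-dependence to obtain the polynomial factor $p^{(\frac2d+1)/(\frac1d-\frac1{q_2})}$ in \eqref{4.32}. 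In other words, the paper carries out exactly the ``fallback'' you describe.

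You instead bootstrap first. The $L^p$-bounds of Lemma~\ref{Lp estimate of u 1} for all $p$ give $\sup_t\|w\|_{L^k}<\infty$ for all $k$, via Lemma~\ref{second w lemma} or the Duhamel formula for $w$. The $m=\infty$ case of Lemma~\ref{beta k condition} with $k>\beta d$ then gives $\nabla v\in L^\infty$. After that, the cross term collapses to $Cp\int_\Omega u^p$, and the exponent $\sigma$ (e.g.\ $\sigma=d+2$) no longer depends on $q_2$. This is legitimate within the paper. The step you call the main obstacle is in fact immediate from results already proved there.

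The trade-off is this. Your route rests on the endpoint semigroup estimate of Lemma~\ref{Neumann est infty case}, the least standard ingredient in the paper. The paper's route needs only $\nabla v\in L^q$ for some $q>d$. That is why the same computation can be recycled in Section~\ref{sec:global2}, where only $\|\nabla v\|_{L^{2q}}$ with $2q>d$ is available (cf.\ \eqref{GN estimate 2}).

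There is one bookkeeping point in your Step~2. Suppose you absorb $\mu\int_\Omega(Au^{\theta+p}+u^{p+1})$ into $\mu\int_\Omega u^{\theta+p+1}$ with a constant remainder. That remainder genuinely has size $C^{p}$ with $C>1$: note that $-u^{\theta+1}+Au^\theta+u=A>0$ at $u=1$. With your definition of $M_k$, it does not fit the purely multiplicative recursion $M_k\le(Ca^k)^{1/p_k}M_{k-1}$. You have three ways to fix this:
\begin{itemize}
\item put that base constant into the maximum defining $M_k$;
\item carry the recursion as $M_k\le\max\{(Ca^k)^{1/p_k}M_{k-1},\,2C\}$, which is still bounded, whereas an additive form $M_k\le b_kM_{k-1}+C$ would not be;
\item avoid the remainder altogether, via the pointwise bound $Au^\theta+u\le u^{\theta+1}+c(A,\theta)$, or via H\"older--Young against $\int_\Omega u^p$ as in \eqref{Youngs 1 in u theta p}--\eqref{youngs in u power p+1}. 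Either leaves only a term $Cp\int_\Omega u^p$.
\end{itemize}

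With this fix, your way of closing the recursion is the right one: put the terms under a single $p_k$-th root and factor out $M_{k-1}^{p_k}$. The paper's last display instead bounds $|\Omega|^{1/p}G(p/2)+C_9^{1/p}p^{\kappa/p}G(p/2)$ by $C_{10}^{1/p}p^{\kappa/p}G(p/2)$. That inequality fails for large $p$, since the left-hand factor tends to $2$; your formulation avoids this slip.
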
						
\begin{proof}
	Combining \eqref{testing with u powerp-1} with the bounds for $\gamma(v)$ and $\gamma'(v)$ given in \eqref{k1 lower bound on gamma} and \eqref{bound on gamma'}, respectively, yields
\begin{align}\label{ener-1}
				\frac{1}{p}\frac{d}{dt} \int_\Omega u^p
			&\leq -(p-1)k_1 \int_\Omega u^{p-2}|\nabla u|^2
			+ (p-1)k_2 \int_\Omega u^{p-1}|\nabla u| |\nabla v|\nonumber\\
			&\quad	+ \mu \int_\Omega \left(-u^{\theta+p+1} + Au^{\theta+p} - Au^p + u^{p+1}\right),
\end{align}
for all $t\in(0,T_{\max})$. By using H\"older's and Young's inequality, we find 
\begin{align*}
(p-1)k_2 \int_\Omega u^{p-1}|\nabla u| |\nabla v|&\leq (p-1) k_2\left[\left(\int_{\Omega}u^{p-2}|\nabla u|^2\right)^\frac{1}{2} \left(\int_{\Omega}u^p|\nabla v|^2\right)^\frac{1}{2}\right]
\nonumber\\&\leq \frac{(p-1)k_1}{2}\int_{\Omega}u^{p-2}|\nabla u|^2+\frac{k^2_{2}(p-1)}{2k_1}\int_{\Omega}u^p|\nabla v|^2.
\end{align*}
In order to estimate $\int_\Omega u^{p-2}|\nabla u|^2$, we use  $|\nabla(u^{p/2})|^2=\frac{p^2}{4}u^{p-2}|\nabla u|^2$, for $p \geq 2$.  Therefore, we infer from \eqref{ener-1} that
\begin{equation*}
\begin{aligned}
	\frac{1}{p}\frac{d}{dt} \int_\Omega u^p&\leq -\frac{4(p-1)k_1}{p^2} \int_\Omega \left|\nabla u^{p/2}\right|^2
+\frac{2k_1(p-1)}{p^2}\int_{\Omega}|\nabla u^{p/2}|^2+\frac{k^2_{2}(p-1)}{2k_1}\int_{\Omega}u^p|\nabla v|^2 \\
&\quad-\mu \int_\Omega u^{\theta+p+1}
+ \mu A \int_\Omega u^{\theta+p}-\mu A\int_{\Omega} u^p + \mu \int_\Omega u^{p+1},
\end{aligned}
\end{equation*}
for all $t\in(0,T_{\max})$. Moreover, as $p\geq2$, we observe that $\frac{p-1}{p}\geq\frac{1}{2}$, $p^2\geq(p^2-p)$, this entails
\begin{align}\label{4.31}
&	\frac{d}{dt} \int_\Omega u^p+\mu Ap \int_{\Omega} u^p+\mu p\int_\Omega u^{\theta+p+1}+C_1\int_\Omega |\nabla u^{p/2}|^2
		\nonumber\\
		&   \leq C_2p^2\int_{\Omega}u^p|\nabla v|^2+ \mu Ap \int_\Omega u^{\theta+p} + \mu p \int_\Omega u^{p+1},
\end{align}
for all $t\in(0,T_{\max})$, where $C_1=k_1, C_2=\frac{k_2^{2}}{2k_1}$.
We recall Lemma \ref{Lq norm of v and grad v}, which invokes $q_2\in(d,\frac{d(\theta+2)}{\beta d-(\theta+2)})$ and $C_3>0$ independent of $p$ satisfying
\[	\|\nabla v(\cdot,t)\|_{L^{q_2}(\Omega)} \le C_3,\ \text{for all } t \in (0,T_{\max}).\]
This with the Gagliardo-Nirenberg inequality and Young's inequality enable us to choose $C_4>0$ and $C_5>0$ fulfilling
\begin{align}\label{4.32}
	C_2 p^2 \int_{\Omega} u^p |\nabla v|^2
	&\leq C_2 p^2
	\left( \int_{\Omega} |\nabla v|^{q_2} \right)^{\frac{2}{q_2}}
	\left( \int_{\Omega} u^{\frac{p q_2}{q_{2}-2}} \right)^{\frac{q_{2}-2}{q_{2}}}
	\nonumber\\
	&\leq C_2 C_3^2 p^2
	\left\| u^{\frac{p}{2}} \right\|_
	{L^{\frac{2q_{2}}{q_{2}-2}}(\Omega)}^2
	\nonumber\\
	&\leq C_4 p^2
	\left\| \nabla u^{\frac{p}{2}} \right\|_{L^2(\Omega)}^{
		2\frac{\frac{1}{q_{2}}+\frac{1}{2}}
		{\frac{1}{d}+\frac{1}{2}}}
	\left\| u^{\frac{p}{2}} \right\|_{L^1(\Omega)}^{
		2\frac{\frac{1}{d}-\frac{1}{q_{2}}}
		{\frac{1}{d}+\frac{1}{2}}}
	+ C_4 p^2
	\left\| u^{\frac{p}{2}} \right\|_{L^1(\Omega)}^2
	\nonumber\\
	&\leq \frac{C_1}{4}
	\left\| \nabla u^{\frac{p}{2}} \right\|_{L^2(\Omega)}^2
	+ C_5 p^{
		\frac{\frac{2}{d}+1}
		{\frac{1}{d}-\frac{1}{q_{2}}}}
	\left\| u^{\frac{p}{2}} \right\|_{L^1(\Omega)}^2,
\end{align}
for all $t\in(0,T_{\max})$, where $C_4$ and $C_5$ are positive constants. Additionally, $q_2>d$ validates $\frac{\frac{1}{q_{2}}+\frac{1}{2}}
{\frac{1}{d}+\frac{1}{2}}<1$. 
One can write 
\begin{align*}
	u^{\theta+p}=u^{\frac{p(\theta+1)+\theta(\theta+1)}{\theta+1}}=u^{\frac{(p+\theta+1)\theta}{\theta+1}}u^{\frac{p}{\theta+1}}. 
\end{align*}
In addition, applying H\"older's inequality with exponents $\frac{\theta+1}{\theta}$ and $ {\theta+1},$ and Young's inequality  to the penultimate integral on the right-hand side of \eqref{4.31}, we obtain
	\begin{equation}\label{Youngs 1 in u theta p}
		\begin{aligned}
				\mu Ap \int_{\Omega}u^{\theta+p} &\leq \mu p
				\left(\left(\int_{\Omega}u^{p+\theta+1}\right)^{\frac{\theta}{\theta+1}}\left(A^{\theta+1}\int_{\Omega}u^p\right)^{\frac{1}{\theta+1}}\right)\\
				&\leq \frac{\mu p}{4} \int_{\Omega}u^{p+\theta+1}+ p c_1(\theta,\mu,A)\int_{\Omega}u^p,
		\end{aligned}
	\end{equation}
where $c_1(\theta,\mu,A) =\mu A^{\theta+1} \frac{1}{(\theta+1)}\left(\frac{4\theta}{\theta+1}\right)^\theta$. Similarly, we obtain 
\begin{equation}\label{youngs in u power p+1}
	\begin{aligned}
		\mu p \int_{\Omega}u^{p+1} &\leq \mu p
		\left(\left(\int_{\Omega}u^{p+\theta+1}\right)^{\frac{1}{\theta+1}}\left(\int_{\Omega}u^p\right)^{\frac{\theta}{\theta+1}}\right)\\
		&\leq \frac{\mu p}{4} \int_{\Omega}u^{p+\theta+1}+ p c_2(\theta,\mu)\int_{\Omega}u^p,
	\end{aligned}
\end{equation}
where $c_2(\theta,\mu) = \frac{\mu 4^{\frac{1}{\theta}}}{(\theta+1)^{(1+\frac{1}{\theta})}}$.
Using \eqref{4.32}, \eqref{Youngs 1 in u theta p} and \eqref{youngs in u power p+1} in \eqref{4.31}, it follows that
\begin{equation}\label{4.35}
	\begin{aligned}
	\frac{d}{dt} \int_\Omega u^p+\mu Ap \int_{\Omega} u^p+\frac{\mu p}{2}\int_\Omega u^{\theta+p+1}+\frac{3C_1}{4}\int_\Omega |\nabla u^{p/2}|^2
&\leq  C_5 p^{
	\frac{\frac{2}{d}+1}
	{\frac{1}{d}-\frac{1}{q_{2}}}}
	\left(\int_{\Omega}u^\frac{p}{2}\right)^2+ C_6p\int_\Omega u^{p},		
	\end{aligned}
\end{equation}
for all $t\in(0,T_{\max})$, where $C_6=c_1(\theta,\mu,A)+c_2(\theta,\mu)$ is a positive constant. To estimate the second integral on the right-hand side of the above inequality, we further invoke the Gagliardo-Nirenberg inequality as
\begin{align*}
	C_6p\int_{\Omega}u^p=C_6 p\|u^{p/2}\|_{L^2}^2
	&\leq
	C_7 p\|\nabla u^{p/2} \|_{L^2}^{\frac{2d}{d+2}}
	\|u^{p/2}\|_{L^1}^{\frac{4}{d+2}}
	+
	C_7p\|u^{p/2}\|_{L^1}^2\\ 
	&\leq\frac{C_1}{4}\|\nabla u^{p/2} \|_{L^2}^2+C_8p^{\frac{d+2}{2}}\|u^{p/2}\|_{L^1}^2+C_8p\|u^{p/2}\|_{L^1}^2\\
    &\leq\frac{C_1}{4}\left\| \nabla u^{\frac{p}{2}} \right\|_{L^2(\Omega)}^2+2C_8p^{\frac{d+2}{2}}\left\| u^{\frac{p}{2}} \right\|_{L^1(\Omega)}^2,
\end{align*}
for all $t\in(0,T_{\max})$, with $C_8>0$. We observe that
$
\frac{\frac{2}{d}+1}{\frac{1}{d}-\frac{1}{q_2}}
>
\frac{d+2}{2}.
$
Therefore, substituting the above estimate in \eqref{4.35}, we obtain
\begin{equation*}
	\begin{aligned}
		\frac{d}{dt} \int_\Omega u^p+\mu Ap \int_{\Omega} u^p+\frac{\mu p}{2}\int_\Omega u^{\theta+p+1}+\frac{C_1}{2}\int_\Omega |\nabla u^{p/2}|^2
		&\leq  C_9 p^{
			\frac{\frac{2}{d}+1}
			{\frac{1}{d}-\frac{1}{q_{2}}}}
		\left(\int_{\Omega}u^\frac{p}{2}\right)^2,		
	\end{aligned}
\end{equation*}
for all $t\in(0,T_{\max})$, with $C_9=2C_8+C_5>0$. Moreover, we can write
\begin{equation*}
	\begin{aligned}
		\frac{d}{dt} \int_\Omega u^p+\mu Ap \int_{\Omega} u^p\leq  C_9 p^{
			\frac{\frac{2}{d}+1}
			{\frac{1}{d}-\frac{1}{q_{2}}}}
		\left(\int_{\Omega}u^\frac{p}{2}\right)^2,\ \text{for all}\ t\in(0,T_{\max}).		
	\end{aligned}
\end{equation*}
It follows that
\begin{align}
		\sup_{0<t<T_{\max}}\left(\int_{\Omega}u^p(\cdot,t)\right) &\leq e^{-\mu Apt}\int_{\Omega}u_0^{p}+C_{10} p^{
			\frac{\frac{1}{d}+\frac{1}{q_2}+1}
			{\frac{1}{d}-\frac{1}{q_{2}}}}(1-e^{-\mu Apt})\sup_{0<t<T_{\max}} \left(\int_\Omega u^{p/2}(\cdot,t)\right)^{2},\nonumber\\ 
			&\leq\int_{\Omega}u_0^{p}+C_{10} p^{
				\frac{\frac{1}{d}+\frac{1}{q_2}+1}
				{\frac{1}{d}-\frac{1}{q_{2}}}}\sup_{0<t<T_{\max}} \left(\int_\Omega u^{p/2}(\cdot,t)\right)^{2}, \nonumber
\end{align}
for all $t\in(0,T_{\max})$, where $C_{10}=\frac{C_{9}}{\mu A}>0$. Taking the $p$-th root on both sides, we obtain
\begin{align}\label{sup u ineq}
	\sup_{0<t<T_{\max}}\left(\int_{\Omega}u^p(\cdot,t)\right)^{1/p} &\leq \left(\int_\Omega u_0^p(x)\right)^{1/p}
	+{C_{10}}^{\frac{1}{p}} \left(p^{\frac{\kappa}{p}}\right)
	\sup_{0<t<T_{\max}} \left(\int_\Omega u^{p/2}(\cdot,t)\right)^{2/p}\nonumber\\
 	&\leq |\Omega|^{1/p}\|u_0\|_{L^\infty(\Omega)}
	+ {C_{10}}^{\frac{1}{p}}\left(p^{\frac{\kappa}{p}}\right)
	\sup_{0<t <T_{\max}} \left(\int_\Omega u^{p/2}(\cdot,t)\right)^{2/p},
\end{align}
where 
\[
\kappa= \frac{\frac{1}{d}+\frac{1}{q_2}+1}
{\frac{1}{d}-\frac{1}{q_{2}}}.
\]
Further, we define
\begin{align*}
	G(q) = \max\left\{\|u_0\|_{L^\infty(\Omega)}, \sup_{0< t< T_{\max}}\left(\int_\Omega u^q\right)^{1/q}\right\}.
\end{align*}
By using \eqref{sup u ineq} in above, we deduce
\begin{equation*}
	\begin{aligned}
		G(p) &\leq \max\left\{\|u_0\|_{L^\infty(\Omega)}, |\Omega|^{1/p}\|u_0\|_{L^\infty(\Omega)} + {C_9}^{\frac{1}{p}}\left(p^{\frac{\kappa}{p}}\right)
		\sup_{0<t <T_{\max}} \left(\int_\Omega u^{p/2}(\cdot,s)\right)^{2/p}\right\}\\
		&\leq |\Omega|^{1/p}\|u_0\|_{L^\infty(\Omega)} + {C_9}^{\frac{1}{p}}\left(p^{\frac{\kappa}{p}}\right)
		\sup_{0<t <T_{\max}} \left(\int_\Omega u^{p/2}(\cdot,s)\right)^{2/p}.
	\end{aligned}
\end{equation*} 
It follows directly from the definition of $G(p)$ that $\|u_0\|_{L^\infty(\Omega)} \leq G(p/2),$
and 
\[\sup_{0<t<T_{\max}} \left(\int_\Omega u^{p/2}(x,t)\right)^{2/p} \leq G(p/2).\]
Consequently, we obtain
\[
G(p) \leq |\Omega|^{1/p}G(p/2) + {C_9}^{\frac{1}{p}}\left(p^{\frac{\kappa}{p}}\right) G(p/2)\leq (C_{10})^{1/p}\left(p^{\frac{\kappa}{p}}\right) G(p/2),
\]
where $C_{10}=\max\{|\Omega|^{1/p},C_9\}$. Now, we let $p=2^j\ (j=1,2,3,\cdots)$ and $C_{11}=2^{\kappa}$, it follows that
\begin{equation*}
	\begin{aligned}
G(2^j) \leq C^{\frac{1}{2^j}}_{10} C^{\frac{j}{2^j}}_{11}G(2^{j-1})\leq\cdots\leq C_{10}^{\sum_{j=1}^{\infty}\frac{1}{2^j}}
C_{11}^{\sum_{j=1}^{\infty}\frac{j}{2^j}}G(2)\leq C_{12}.
	\end{aligned}
\end{equation*}
Due to Lemma \ref{Lp estimate of u 1},  $C_{12}>0$ is a constant, independent of $p$.  Using the above bound, we deduce  from \eqref{sup u ineq} that
\begin{align*}
	\sup_{0<t<T_{\max}}\|u(\cdot,t)\|_{L^{2^j}}\leq C_{12}. 
\end{align*}
We therefore achieve \eqref{uniform bound C on u} by sending $j\to\infty$.
\end{proof}
We are now ready to conclude the proof of Theorem \ref{global bound_H1}.
\begin{proof}[Proof of Theorem \ref{global bound_H1}]
By Lemma \ref{Linfty bound of u},  there exists a constant $C>0$ satisfying
	\[
	\|u(\cdot,t)\|_{L^\infty(\Omega)}
	\leq C,
	\ \text{ for all } t\in(0,T_{\max}).
	\]
 Consequently, the blow-up criterion \eqref{blow up crit} would contradict for all $t\in(0,T_{\max})$. We therefore
	conclude that $T_{\max}=\infty$, and the conclusion of Theorem
	\ref{global bound_H1} follows.
\end{proof}
\section{Boundedness under hypotheses \eqref{Hypothesis H1}, \eqref{Hypothesis H2} on the motility function}\label{sec:global2}		
In this section, we prove the existence of globally bounded classical solutions to \eqref{Main} by imposing a stronger condition on the motility function $\gamma$. Under the assumptions on $\gamma$ specified in \eqref{Hypothesis H1} and \eqref{Hypothesis H2}, the higher regularity of $v$ obtained in Lemma~\ref{Lq norm of v and grad v}, namely \eqref{Lq estimate grad v}, is not needed to derive the $L^p$ estimate for $u$. We first establish several integral estimates for solutions of \eqref{Main}, which will then serve as key ingredients in proving the desired global boundedness.

\begin{lemma}\label{estimate 4.1}
	Let the assumptions \eqref{Initial Cond}, \eqref{Hypothesis H1} and \eqref{Hypothesis H2} be valid. Then for all $1<p<\infty$, we have 
	\begin{equation}\label{estimate 4.47}
		\begin{aligned}
				\frac{1}{p}\frac{d}{dt}\int_{\Omega} u^p+ \mu A \int_{\Omega} u^p
				&+ \frac{p-1}{2} \int_{\Omega} \gamma(v) u^{p-2}|\nabla u|^2+\frac{3\mu}{4}\int_{\Omega}u^{\theta+p+1}\\
				& \leq \frac{k_3 k_4^{2} (p-1)}{2}\int_{\Omega}u^p |\nabla v|^2+C.
		\end{aligned}
	\end{equation}
\end{lemma}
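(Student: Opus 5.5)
The plan is to repeat the testing procedure of Lemma \ref{Lp estimate of u 1}, but to replace the lower bound \eqref{k1 lower bound on gamma} (which required the $L^\infty$-bound of $v$ from Lemma \ref{Lq norm of v and grad v}) by a structural bound on the ratio $\gamma'/\gamma$ coming from \eqref{Hypothesis H2}. First I fix the constants. By \eqref{Hypothesis H1}, $\gamma$ is positive and decreasing, so $\gamma(v)\le \gamma(0)=:k_3$ for all $v\ge0$. By \eqref{Hypothesis H2}, the function $\gamma'/\gamma$ is continuous on $[0,\infty)$ and has a finite limit at infinity, hence it is bounded: $|\gamma'(v)|/\gamma(v)\le k_4$ for all $v\ge 0$. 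These are the constants $k_3,k_4$ appearing in \eqref{estimate 4.47}. I use them through the identity
\[
\frac{|\gamma'(v)|^2}{\gamma(v)}=\gamma(v)\Big(\frac{\gamma'(v)}{\gamma(v)}\Big)^2\le k_3k_4^2 .
\]

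Next I test the first equation of \eqref{Main} with $u^{p-1}$, exactly as in \eqref{testing with u powerp-1}. Writing $\Delta(\gamma(v)u)=\nabla\cdot(\gamma(v)\nabla u+\gamma'(v)u\nabla v)$ and using the Neumann conditions gives
\[
\frac1p\frac{d}{dt}\int_\Omega u^p+(p-1)\int_\Omega\gamma(v)u^{p-2}|\nabla u|^2
=-(p-1)\int_\Omega u^{p-1}\gamma'(v)\nabla u\cdot\nabla v+\mu\int_\Omega\big(-u^{\theta+p+1}+Au^{\theta+p}-Au^p+u^{p+1}\big).
\]
I split the cross term with weights $\gamma^{1/2}$ and $|\gamma'|\gamma^{-1/2}$ and apply Young's inequality:
\[
(p-1)\int_\Omega u^{p-1}|\gamma'(v)||\nabla u||\nabla v|\le \frac{p-1}{2}\int_\Omega\gamma(v)u^{p-2}|\nabla u|^2+\frac{p-1}{2}\int_\Omega\frac{|\gamma'(v)|^2}{\gamma(v)}u^p|\nabla v|^2 .
\]
The bound $|\gamma'|^2/\gamma\le k_3k_4^2$ then turns the last integral into $\frac{k_3k_4^2(p-1)}{2}\int_\Omega u^p|\nabla v|^2$. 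The first term is absorbed into the left-hand side, which leaves exactly the coefficient $\frac{p-1}{2}$ in front of $\int_\Omega\gamma(v)u^{p-2}|\nabla u|^2$.

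Finally, I treat the reaction terms as in Lemma \ref{Lp estimate of u 1}, using $\mu>0$. The term $-\mu A\int_\Omega u^p$ is moved to the left. The positive terms $\mu A\int_\Omega u^{\theta+p}$ and $\mu\int_\Omega u^{p+1}$ have exponents strictly below $\theta+p+1$, so H\"older's and Young's inequalities bound each of them by $\frac{\mu}{8}\int_\Omega u^{\theta+p+1}+C(p,\theta,\mu,A,|\Omega|)$. Subtracting these from $\mu\int_\Omega u^{\theta+p+1}$ leaves $\frac{3\mu}{4}\int_\Omega u^{\theta+p+1}$ on the left, which gives \eqref{estimate 4.47}.

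The only delicate point is the first step: it is \eqref{Hypothesis H2}, not a bound on $v$, that controls $|\gamma'|^2/\gamma$ uniformly, since $\gamma$ may tend to $0$. Boundedness of $\gamma'/\gamma$ near infinity follows from existence of the limit, and boundedness on compact intervals follows from continuity together with $\gamma>0$.
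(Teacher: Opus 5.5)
Your proposal is correct and follows the paper's proof essentially step by step: the same constants $k_3=\gamma(0)$ and $k_4\ge|\gamma'|/\gamma$ from \eqref{Hypothesis H1}--\eqref{Hypothesis H2}, the same test function $u^{p-1}$, the same Young splitting of the cross term, and the same absorption of $\mu\int_\Omega u^{p+1}$ and $\mu A\int_\Omega u^{\theta+p}$ into $\frac{\mu}{8}\int_\Omega u^{\theta+p+1}$ each. Your pointwise splitting with weights $\gamma^{1/2}$ and $|\gamma'|\gamma^{-1/2}$ is a cleaner version of the paper's step, which instead applies Young's inequality with the non-constant ``$\epsilon=\gamma(v)/2$'' after a global H\"older estimate.
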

\begin{proof}
	By having hypotheses \eqref{Hypothesis H1} and \eqref{Hypothesis H2}, we have 
	\begin{equation}\label{additional bounds}
		\begin{aligned}
			\gamma(v)\leq \gamma(0)=k_3,\  \frac{|\gamma'(v(\cdot,t)|)}{\gamma(v(\cdot,t))}\leq k_4,
		\end{aligned}
	\end{equation}
	for all $t\in(0,T_{\max})$ with $k_3,k_4>0$. We multiply equation \eqref{Main} by  $u^{p-1}$   and integrate it over spatial domain to obtain 
	\begin{equation}\label{testing u^p-1}
		\begin{aligned}
			\frac{1}{p}\frac{d}{dt}\int_{\Omega} u^p +(p-1)\int_{\Omega} u^{p-2}\gamma(v)|\nabla u|^2 
			&\leq (p-1)\int_{\Omega}u^{p-1}|\gamma'(v)||\nabla u||\nabla v|\\
			 &  \quad+\mu\int_{\Omega} (u^{p+1}-Au^p-u^{\theta+p+1}+Au^{\theta+p}).
		\end{aligned}
	\end{equation}
	For the first term on the right-hand side of the preceding equation, an application of Hölder's inequality followed by Young's inequality with $\epsilon=\frac{\gamma(v)}{2}$ yields
	\begin{align}\label{grad u grad v youngs}
		&	(p-1)\int_{\Omega}u^{p-1}|\gamma'(v)||\nabla u||\nabla v|  \nonumber\\&=(p-1)\int_{\Omega}u^{\frac{p-2}{2}}|\nabla u||\nabla v||\gamma'(v)|u^{\frac{p}{2}}
		\nonumber\\
			&\leq (p-1)\left(\int_{\Omega}(u^{p-2}|\nabla u|^2)\right)^\frac{1}{2} \left(\int_{\Omega}u^p|\nabla v|^2|\gamma'(v)|^2\right)^\frac{1}{2}
			\nonumber\\
			&\leq \frac{\gamma(v)(p-1)}{2}\int_{\Omega}u^{p-2}|\nabla u|^2 +\frac{(p-1)}{2\gamma(v)}\int_{\Omega} u^p |\nabla v|^2 |\gamma'(v)|^2
			\nonumber\\
			&\leq \frac{\gamma(v)(p-1)}{2}\int_{\Omega}u^{p-2}|\nabla u|^2 +\frac{(p-1)}{2}\int_{\Omega} u^p |\nabla v|^2 \gamma(v)\frac{|\gamma'(v)|^2}{|\gamma(v)|^2}.
		\end{align}
 Combining \eqref{grad u grad v youngs} with \eqref{testing u^p-1} and invoking assumption \eqref{additional bounds}, we achieve
		\begin{align}\label{estimate 4.51}
			\frac{1}{p}\frac{d}{dt}\int_{\Omega} u^p &+\frac{(p-1)}{2}\int_{\Omega} u^{p-2}\gamma(v)|\nabla u|^2
			+\mu A\int_{\Omega}u^p+\mu\int_{\Omega}u^{\theta+p+1}\nonumber\\
		    & \leq \frac{(p-1)}{2}\int_{\Omega} u^p |\nabla v|^2 \gamma(v)\frac{|\gamma'(v)|^2}{|\gamma(v)|^2} + \mu\int_{\Omega} (u^{p+1}+Au^{\theta+p})\nonumber\\
			&\leq \frac{k_3 k_4^{2}(p-1)}{2} \int_{\Omega} u^p |\nabla v|^2+\mu\int_{\Omega}u^{p+1}+A\mu\int_{\Omega}u^{\theta+p}. 
		\end{align}
	Now, we estimate second and third integral appearing in the right  hand side of \eqref{estimate 4.51}, by using H\"older's and Young's inequalities as
	\begin{align}\label{youngs 1}
			\mu\int_{\Omega}u^{p+1}&\leq\left(\int_{\Omega}u^{\theta+p+1}\right)^{\frac{p+1}{\theta+p+1}}\left(\int_{\Omega}\mu^{\frac{\theta+p+1}{\theta}}\right)^\frac{\theta}{\theta+p+1}\leq \frac{\mu}{8}\int_{\Omega}u^{\theta+p+1}+{C_1},
		\end{align}
	where ${C_1}=\frac{|\Omega|\theta\mu^{\frac{\theta+p+1}{\theta}}}{\theta+p+1}\left(\frac{8(p+1)}{\mu(\theta+p+1)}\right)^{p+1/\theta}$, and
		\begin{align}\label{youngs 2}
			A\mu\int_{\Omega}u^{\theta+p}&\leq\left(\int_{\Omega}u^{\theta+p+1}\right)^\frac{\theta+p}{\theta+p+1}\left(\int_{\Omega}(A\mu)^{\theta+p+1}\right)^\frac{1}{\theta+p+1}
			\leq \frac{\mu}{8}\int_{\Omega}u^{\theta+p+1}+{C_2},
	    \end{align}	    
	where ${C_2}=\frac{|\Omega|(A\mu)^{\theta+p+1}}{(\theta+p+1)}\left(\frac{8(\theta+p)}{\mu (\theta+p+1)}\right)^{p+\theta}$. By using \eqref{youngs 1} and \eqref{youngs 2} in \eqref{estimate 4.51}, we finally deduce
	\begin{align*}
			&\frac{1}{p}\frac{d}{dt}\int_{\Omega} u^p+\mu A\int_{\Omega}u^p +\frac{(p-1)}{2}\int_{\Omega} u^{p-2}\gamma(v)|\nabla u|^2+\frac{3\mu}{4}\int_{\Omega}u^{\theta+p+1}
			\nonumber\\&\leq \frac{k_3 k_4^{2}(p-1)}{2} \int_{\Omega} u^p |\nabla v|^2+C,
	\end{align*}
	which completes the proof. 
\end{proof}
We adopt the approach presented in \cite[Section 4]{JAMAA25} to establish the following lemma, which is needed to complete the proof of Lemma \ref{lemma 4.8}.
\begin{lemma}\label{derivative of grad v to power 2q}
	Let \eqref{Initial Cond} be true. Then for any $1<q<\infty$, there exists a constant $C$ such that 
	\begin{equation}\label{estimate 4.52}
			\frac{1}{q}\frac{d}{dt}\int_{\Omega}|\nabla v|^{2q}+2\int_{\Omega}|\nabla v|^{2q}+\frac{q-1}{q^2}\int_{\Omega}|\nabla|\nabla v|^q|^2
			\leq \left(4q-4+\frac{d}{2}\right)\int_{\Omega}w^{2\beta}|\nabla v|^{2q-2}+ C,
	\end{equation} 
	for all $t\in (0,T_{\max})$                .
\end{lemma}
\begin{proof}
	From the second equation of \eqref{Main}, we obtain 
	\begin{equation}\label{decompose into three int}
		\begin{aligned}
			\frac{1}{q}\frac{d}{dt}\int_{\Omega}|\nabla v|^{2q} &=2\int_{\Omega}|\nabla v|^{2q-2}\nabla v \cdot \nabla v_t\\
			&=2\int_{\Omega}|\nabla v|^{2q-2}\nabla v \cdot \nabla(\Delta v-v+w^\beta)\\
			&=2\int_{\Omega} |\nabla v|^{2q-2}\nabla v\cdot\nabla \Delta v-2\int_{\Omega}|\nabla v|^{2q}+2\int_{\Omega}|\nabla v|^{2q-2}\nabla v \cdot \nabla w^\beta\\
			&=I_1+I_2+I_3.
		\end{aligned}
	\end{equation}
	\vskip 0.1 cm
	\noindent 
	\textbf{Estimation of $I_1$}:
	To deal with $(2\nabla v\cdot\nabla \Delta v)$ in $I_1$, 
	 we use  the identity
$$
	2 \nabla v \cdot \nabla \Delta v
	= \Delta |\nabla v|^2 - 2 |D^2 v|^2.
$$
Therefore, we estimate $I_1$ by using the above identity as
	\begin{align}\label{bound. int of v}
			I_1&=2\int_{\Omega}|\nabla v|^{2q-2}\nabla v\cdot \nabla\Delta v \nonumber\\&=\int_{\Omega}|\nabla v|^{2q-2}\Delta|\nabla v|^2-2\int_{\Omega}|\nabla v|^{2q-2}|D^2v|^2\nonumber\\
			&=-\int_{\Omega}\nabla |\nabla v|^{2q-2}\cdot \nabla|\nabla v|^2+\int_{\partial\Omega}|\nabla v|^{2q-2}\frac{\partial|\nabla v|^2}{\partial \nu}-2\int_{\Omega}|\nabla v|^{2q-2}|D^2 v|^2
		\nonumber	\\ &=-\frac{4(q-1)}{q^2}\int_{\Omega}|\nabla |\nabla v|^q|^2+\int_{\partial\Omega}|\nabla v|^{2q-2}\frac{\partial|\nabla v|^2}{\partial \nu}-2\int_{\Omega}|\nabla v|^{2q-2}|D^2 v|^2.
		\end{align}
We next estimate the boundary integral in \eqref{bound. int of v}. By Lemma~\ref{control boun integral}, which provides an estimate for the normal derivative of $|\nabla f|^2$ on $\partial\Omega$ for $f\in C^2(\overline{\Omega})$ satisfying the homogeneous Neumann boundary condition, we obtain from \eqref{normal derivative bound}, upon taking $f=\nabla v$, that
	\begin{equation*}
		\begin{aligned}
			\frac{\partial |\nabla v|^2}{\partial \nu} \leq c_{\Omega} |\nabla v|^2.
		\end{aligned}
	\end{equation*}
On multiplying the above equation with $|\nabla v|^{2q-2}$ and integrating over the boundary, we get 
	\begin{equation}\label{boundary integral}
		\begin{aligned}
		\int_{\partial\Omega} \frac{\partial |\nabla v|^2}{\partial \nu}\, |\nabla v|^{2q-2}
		\le c_\Omega \int_{\partial\Omega} |\nabla v|^2 |\nabla v|^{2q-2} = c_{\Omega}\bigl\||\nabla v|^q\bigr\|_{L^2(\partial\Omega)}^2.
		\end{aligned}
	\end{equation}
On the other hand, by Lemma~\ref{compact embedd} and Lemma \ref{linear bounded map}, choosing \(r\in\left(0,\frac12\right)\), the trace operator \(W^{r+\frac12,2}(\Omega)\to W^{r,2}(\partial\Omega)\) is bounded, while the embedding \(W^{r,2}(\partial\Omega)\hookrightarrow L^2(\partial\Omega)\) is compact. Consequently, the composition \(W^{r+\frac12,2}(\Omega)\hookrightarrow L^2(\partial\Omega)\) is compact. Therefore,
	\begin{equation}\label{fractional reg}
	\bigl\||\nabla v|^q\bigr\|_{L^2(\partial\Omega)}^2
	\le C \bigl\||\nabla v|^q\bigr\|_{W^{r+\frac12,2}(\Omega)}^2.
\end{equation}
Furthermore, in order to apply the fractional Gagliardo-Nirenberg inequality from Lemma~\ref{fractional gagliardo} to the right hand side of \eqref{fractional reg}, let us pick $a\in(0,1)$ satisfying
\[
\frac12-\frac{r+\frac12}{d}
=(1-a)\frac{q}{\varrho}+a\left(\frac12-\frac1d\right),
\]
that is, 
\[
a = \left(\frac{1}{2} - \frac{1}{2d} - \frac{q}{\varrho} - \frac{r}{d}\right)
\left(\frac{1}{2} - \frac{1}{d} - \frac{q}{\varrho}\right)^{-1}.
\]
Noting that \(r\in\left(0,\frac12\right)\) and $\varrho\leq2q$ imply   $r+\frac{1}{2}\leq a<1$.
We see from the fractional Gagliardo-Nirenberg inequality \eqref{gagliardo ineq} with \(f=|\nabla v|^q\)  together with Lemma~\ref{beta k condition} (by choosing $k=\beta d$) which yields a uniform bound for the quantity
$
\big\||\nabla v|^q\big\|_{L^{\frac{\varrho}{q}}(\Omega)}
$ by fixing $\varrho$ in such a way that $1<q\leq \varrho \leq 2q$.
Consequently, there exist positive constants \(c_3\) and \(c_4\) such that
\begin{equation}\label{gagliardo apply}
\begin{aligned}
	\bigl\||\nabla v|^q\bigr\|_{W^{r+\frac12,2}(\Omega)}^2
	&\leq c_1 \bigl\|\nabla |\nabla v|^q\bigr\|_{L^2(\Omega)}^{2a}
	\bigl\||\nabla v|^q\bigr\|_{L^{\varrho/q}(\Omega)}^{2(1-a)} 
	+ c_2\bigl\||\nabla v|^q\bigr\|_{L^{\varrho/q}(\Omega)}^2\\
	&\leq c_3 \|\nabla |\nabla v|^q\|_{L^2(\Omega)}^{2a} + c_4.
\end{aligned}
\end{equation}
Combining \eqref{boundary integral}, \eqref{fractional reg}, and \eqref{gagliardo apply}, we proceed as
	\begin{equation}\label{eqn-000}
		\int_{\partial\Omega}\frac{\partial |\nabla v|^2}{\partial \nu} |\nabla v|^{2q-2} \,
		\leq c_4 \|\nabla|\nabla v|^q\|_{L^2(\Omega)}^{2a} + c_5
		= c_4 \left( \int_{\Omega} |\nabla|\nabla v|^q|^2 \,  \right)^a + c_5.
	\end{equation}
Using the fact that \(a\in(0,1)\) and \(1<q<\infty\), we apply Young's inequality with conjugate exponents \(\frac{1}{a}\) and \(\frac{1}{1-a}\) with \(\epsilon=\frac{2(q-1)}{q^2}\), we obtain
	\begin{align*}
		c_4 \left( \int_{\Omega} |\nabla |\nabla v|^q|^2 \, \right)^a\leq \frac{2(q-1)}{q^2} \left( \int_{\Omega} |\nabla |\nabla v|^q|^2 \,  \right)+(1-a)\left[\frac{aq^2}{2(q-1)}\right]^{\frac{a}{1-a}}c_{4}^{\frac{1}{1-a}}.
	\end{align*}
	Therefore, from \eqref{eqn-000}, we infer 
	\begin{equation}\label{final estimate on boundary integral}
	\int_{\partial\Omega}\frac{\partial |\nabla v|^2}{\partial \nu}|\nabla v|^{2q-2} \, 
		\leq \frac{2(q-1)}{q^2} \left( \int_{\Omega} |\nabla |\nabla v|^q|^2 \, \right)+ c_6,
	\end{equation}
where \(c_6=(1-a)\left[\frac{aq^2}{2(q-1)}\right]^{\frac{a}{1-a}}c_4^{\frac{1}{1-a}}+c_5\) is a positive constant. 
Hence, substituting \eqref{final estimate on boundary integral} in \eqref{bound. int of v}, we get
\begin{align}\label{I_2 secon ineq}
		I_1
	&\leq-\frac{2(q-1)}{q^2}\int_{\Omega}|\nabla |\nabla v|^q|^2-2\int_{\Omega}|\nabla v|^{2q-2}|D^2 v|^2+c_6.
\end{align}
\textbf{Estimation of $I_3$}: We have 
	\begin{align*}
	I_3&=2\int_\Omega |\nabla v|^{2q-2}\nabla v\cdot \nabla w^{\beta}\nonumber\\
	&= -2\int_\Omega w^{\beta}\,\nabla\cdot\big(|\nabla v|^{2q-2}\nabla v\big)
	\nonumber\\&= -2(q-1)\int_\Omega w^{\beta}|\nabla v|^{2q-4}\,\nabla v\cdot\nabla|\nabla v|^2
		- 2\int_\Omega w^{\beta}|\nabla v|^{2q-2}\Delta v.
	\end{align*}
Now, using the inequality \( |\Delta v|\leq \sqrt{d}\,|D^2v| \) in the penultimate term of above inequality, we infer
	\[
	-2\int_\Omega w^{\beta}|\nabla v|^{2q-2}\Delta v
	\le 2\int_\Omega w^{\beta}|\nabla v|^{2q-2}|\Delta v|
	\le 2\sqrt{d}\int_\Omega w^{\beta}|\nabla v|^{2q-2}|D^2v|.
	\]
Therefore,
\begin{align}\label{I_3 ineq}
	I_3
	&\leq-2(q-1)\int_\Omega w^{\beta}|\nabla v|^{2q-4}\,\nabla v\cdot\nabla|\nabla v|^2+2\sqrt{d}\int_\Omega w^{\beta}|\nabla v|^{2q-2}|D^2v|
	\nonumber\\&\leq   \frac{4(q-1)}{q}\int_\Omega \big(w^{\beta}|\nabla v|^{q-1}\big)\,\big|\nabla|\nabla v|^q\big|+2\sqrt{d}\int_\Omega w^{\beta}|\nabla v|^{2q-2}|D^2v|
	\nonumber\\&\leq  \frac{(q-1)}{q^2}\int_\Omega \big|\nabla|\nabla v|^q\big|^2
	+ {4(q-1)}\int_\Omega w^{2\beta}|\nabla v|^{2q-2}+\int_\Omega
	|\nabla v|^{2q-2}
	\left(
	\frac{d}{2}w^{2\beta}+2|D^2v|^2
	\right)\,.
\end{align}
By combining $I_1$, $I_2$ and $I_3$ from \eqref{I_2 secon ineq}, \eqref{I_3 ineq} and \eqref{decompose into three int}, finally we achieve
	\begin{align*}
	\frac{1}{q}\frac{d}{dt}\int_{\Omega}|\nabla v|^{2q}+2\int_{\Omega}|\nabla v|^{2q}+\frac{q-1}{q^2}\int_{\Omega}|\nabla|\nabla v|^q|^2
	\leq (4q-4+\frac{d}{2})\int_{\Omega}w^{2\beta}|\nabla v|^{2q-2}+ c_6,
	\end{align*}
	which completes the proof.
\end{proof}
\begin{lemma}\label{second w lemma}
	Assume that \eqref{Initial Cond} holds and let $1<p<\infty$. Then the solution component $w$ of \eqref{Main} obeys the following differential inequality:
	\begin{equation}\label{estimate 4.76}
		\begin{aligned}
			\frac{d}{dt}\int_{\Omega}w^p+\frac{\delta p}{2}\int_{\Omega}w^p\leq\left(\frac{2}{\delta}\right)^{p-1}\int_{\Omega}u^p,
		\end{aligned}
	\end{equation}
	for every $t\in(0,T_{\max})$.
\end{lemma}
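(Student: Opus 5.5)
We test the third equation of \eqref{Main} against $p\,w^{p-1}$ and integrate over $\Omega$. Since $w$ is nonnegative and, by Theorem~\ref{local ex lemma}, $w\in C^{0,1}(\overline{\Omega}\times[0,T_{\max}))$, the map $t\mapsto\int_\Omega w^p$ is differentiable and no boundary terms appear, because the $w$-equation contains no spatial derivatives. This gives
\[
\frac{d}{dt}\int_\Omega w^p+\delta p\int_\Omega w^p=p\int_\Omega u\,w^{p-1},
\qquad t\in(0,T_{\max}).
\]

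The right-hand side is handled by Young's inequality with conjugate exponents $\frac{p}{p-1}$ and $p$. The goal is to absorb half of the damping term, so we aim for
\[
p\,u\,w^{p-1}\le \frac{\delta p}{2}\,w^{p}+c(p,\delta)\,u^{p}.
\]
Write $u\,w^{p-1}=\bigl(\varepsilon w\bigr)^{p-1}\cdot\varepsilon^{1-p}u$. Young's inequality then yields
\[
u\,w^{p-1}\le \frac{p-1}{p}\,\varepsilon^{p}w^p+\frac{1}{p}\,\varepsilon^{-p(p-1)}u^p .
\]
We choose $\varepsilon$ so that $(p-1)\varepsilon^p=\frac{\delta p}{2}$, or more simply use the cruder but cleaner choice $\varepsilon^p=\frac{\delta}{2}$. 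Since $p-1<p$, this second choice gives $p\cdot\frac{p-1}{p}\varepsilon^p\le\frac{\delta p}{2}$. The coefficient of $u^p$ is then
\[
\varepsilon^{-p(p-1)}=\left(\frac{2}{\delta}\right)^{p-1},
\]
which is exactly the constant in \eqref{estimate 4.76}.

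Substituting this into the identity and moving $\frac{\delta p}{2}\int_\Omega w^p$ to the left-hand side gives \eqref{estimate 4.76} for all $t\in(0,T_{\max})$. The only step requiring care is tracking the constant through Young's inequality so that the coefficient comes out precisely as $(2/\delta)^{p-1}$ rather than a $p$-dependent multiple. The choice $\varepsilon^p=\delta/2$ above achieves this, with slack in the $w^p$ term coming from $p-1\le p$. Apart from that, the argument is routine.
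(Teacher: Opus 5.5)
Your proof is correct and follows essentially the paper's argument: test the $w$-equation with $w^{p-1}$ (equivalently $p\,w^{p-1}$) and apply Young's inequality with exponents $\frac{p}{p-1}$ and $p$. The only cosmetic difference is where the slack goes. The paper keeps the coefficient $\frac{\delta}{2}$ on $w^p$ exactly and discards the factor $\bigl(\frac{p-1}{p}\bigr)^{p-1}<1$ from the $u^p$ coefficient, whereas you keep $(2/\delta)^{p-1}$ exactly and absorb the slack through $(p-1)\frac{\delta}{2}\le\frac{\delta p}{2}$.
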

\begin{proof}
Multiplying the third equation in \eqref{Main} by $w^{p-1}$ and integrating over $\Omega$, and subsequently applying Hölder's  and Young's inequalities with $\epsilon=\frac{\delta}{2}$ and conjugate exponents $\frac{p}{p-1}$ and $p$, we obtain

\begin{align*}
			\frac{1}{p}\frac{d}{dt}\int_{\Omega}w^p&=-\delta\int_{\Omega}w^p+\int_{\Omega}uw^{p-1}\\
			&\leq -\delta\int_{\Omega}w^p+\frac{\delta}{2}\int_{\Omega}w^p+\left(\frac{2}{\delta}\right)^{p-1}\left(\frac{p-1}{p}\right)^{p-1} \frac{1}{p}\int_{\Omega}u^p.
\end{align*}
The above inequality can be rewritten in the form
\begin{align*}
			\frac{d}{dt}\int_{\Omega}w^p &\leq -\frac{\delta p}{2}\int_{\Omega}w^p+\left(\frac{2}{\delta }\right)^{p-1}\int_{\Omega}u^p,
\end{align*}
where \(\left(\frac{p-1}{p}\right)^{p-1}<1\).
\end{proof}
The following integral estimate is an immediate consequence of Lemmas~ \ref{estimate 4.1}-\ref{second w lemma}.
\begin{lemma}\label{lemma 4.8}
	Assume that \eqref{Initial Cond}, \eqref{Hypothesis H1}, and \eqref{Hypothesis H2} are satisfied. Then, for any sufficiently large $1<p<\infty$ and $1<q<\infty$, there exist positive constants ${C_1}$, ${C_2}$, and ${C_3}$ such that
	\begin{equation}\label{lemma for second theorem proof}
		\begin{aligned}
			&\frac{d}{dt}\left(\frac{1}{p}\int_{\Omega}u^p+\frac{1}{q}\int_{\Omega}|\nabla v|^{2q}+\int_{\Omega}w^p\right)+\mu A\int_{\Omega}u^p+2\int_{\Omega}|\nabla v|^{2q}+\frac{\delta p}{4}\int_{\Omega}w^p\\
			&\quad+\frac{\mu}{2}\int_{\Omega}u^{\theta+p+1}+\frac{q-1}{q^2}\int_{\Omega}|\nabla|\nabla v|^q|^2\leq C_1\int_{\Omega}|\nabla v|^{\chi_1}+C_2\int_{\Omega}|\nabla v|^{\chi_2}+C_3,
		\end{aligned}
	\end{equation}
	for every $t\in (0,T_{\max})$, where the exponents $\chi_1$ and $\chi_2$ are given by
	\begin{equation*}\label{chi values}
		\chi_1=\chi_1(p,q)=\frac{2(\theta+p+1)}{\theta+1},
		\ 
		\chi_2=\chi_2(p,q)=\frac{p(2q-2)}{p-2\beta}.
	\end{equation*}
\end{lemma}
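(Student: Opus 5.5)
We add the three differential inequalities \eqref{estimate 4.47}, \eqref{estimate 4.52} and \eqref{estimate 4.76}, and then use Young's inequality to absorb the two cross terms $\int_\Omega u^p|\nabla v|^2$ and $\int_\Omega w^{2\beta}|\nabla v|^{2q-2}$. What is left on the right-hand side should be powers of $|\nabla v|$ plus a constant.

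\emph{Step 1: the term from Lemma \ref{estimate 4.1}.} Apply Young's inequality with exponents $\frac{\theta+p+1}{p}$ and $\frac{\theta+p+1}{\theta+1}$:
\[
\frac{k_3k_4^2(p-1)}{2}\int_\Omega u^p|\nabla v|^2\le \frac{\mu}{8}\int_\Omega u^{\theta+p+1}+C\int_\Omega |\nabla v|^{\frac{2(\theta+p+1)}{\theta+1}} .
\]
The exponent here is exactly $\chi_1$. This leaves $\frac{5\mu}{8}\int_\Omega u^{\theta+p+1}$ on the left.

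\emph{Step 2: the $w$-equation.} Add \eqref{estimate 4.76}. Its right-hand side $(2/\delta)^{p-1}\int_\Omega u^p$ is bounded by $\frac{\mu}{16}\int_\Omega u^{\theta+p+1}+C$, using Young's inequality since $p<\theta+p+1$.

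\emph{Step 3: the term from Lemma \ref{derivative of grad v to power 2q}.} Estimate $(4q-4+\frac d2)\int_\Omega w^{2\beta}|\nabla v|^{2q-2}$ with exponents $\frac{p}{2\beta}$ and $\frac{p}{p-2\beta}$. This is where we need $p>2\beta$, i.e.\ $p$ sufficiently large. It gives
\[
\left(4q-4+\tfrac d2\right)\int_\Omega w^{2\beta}|\nabla v|^{2q-2}\le \frac{\delta p}{4}\int_\Omega w^p+C\int_\Omega|\nabla v|^{\frac{p(2q-2)}{p-2\beta}},
\]
whose exponent is $\chi_2$. The left side retains $\frac{\delta p}{2}-\frac{\delta p}{4}=\frac{\delta p}{4}$ of $\int_\Omega w^p$, as in the statement.

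\emph{Assembly.} Collect the absorbed terms. The remaining coefficient of $\int_\Omega u^{\theta+p+1}$ is at least $\frac{\mu}{2}$, and the terms $\mu A\int_\Omega u^p$, $2\int_\Omega|\nabla v|^{2q}$ and $\frac{q-1}{q^2}\int_\Omega|\nabla|\nabla v|^q|^2$ stay on the left. The constants from Lemmas \ref{estimate 4.1} and \ref{derivative of grad v to power 2q} and from Step 2 combine into $C_3$, which gives \eqref{lemma for second theorem proof}.

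\emph{Main obstacle.} The estimate is routine, but Step 3 needs care. The Young weight must depend on $p$ and $q$ so that the $w^p$ absorption works uniformly, and $p>2\beta$ is needed for the conjugate exponent to be admissible.
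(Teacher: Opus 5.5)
Your proposal is correct and follows the paper's proof essentially step for step. The paper also sums \eqref{estimate 4.47}, \eqref{estimate 4.52} and \eqref{estimate 4.76}, drops the nonnegative term $\frac{p-1}{2}\int_\Omega\gamma(v)u^{p-2}|\nabla u|^2$ (which you should state explicitly), and absorbs the three right-hand terms by Young's inequality with exactly your exponents. The only difference is that it uses $\frac{\mu}{8}$ rather than your $\frac{\mu}{16}$ in Step 2, which gives exactly $\frac{3\mu}{4}-\frac{\mu}{8}-\frac{\mu}{8}=\frac{\mu}{2}$.
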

\begin{proof}
By combining the equations \eqref{estimate 4.47}, \eqref{estimate 4.52} and \eqref{estimate 4.76}, we have
	\begin{equation}\label{estimate 4.79}
		\begin{aligned}
			&\frac{d}{dt}\left(\frac{1}{p}\int_{\Omega}u^p+\frac{1}{q}\int_{\Omega}|\nabla v|^{2q}+\int_{\Omega}w^p\right)+\mu A\int_{\Omega}u^p+2\int_{\Omega}|\nabla v|^{2q}+\frac{\delta p}{2}\int_{\Omega}w^p\\
			&\leq \frac{k_3 k_4^{2} (p-1)}{2}\int_{\Omega}u^p |\nabla v|^2 -\frac{3\mu}{4}\int_{\Omega}u^{\theta+p+1}+\left(4q-4+\frac{d}{2}\right)\int_{\Omega}w^{2\beta}|\nabla v|^{2q-2}\\
			&\quad+\left(\frac{2}{\delta}\right)^{p-1}\int_{\Omega}u^p-\frac{q-1}{q^2}\int_{\Omega}|\nabla|\nabla v|^q|^2+{C_1},
		\end{aligned}
	\end{equation}
for all $t\in(0,T_{\max})$. Since $\gamma>0$ and $p>1$, the term $\frac{p-1}{2}\int_{\Omega}\gamma(v)u^{p-2}|\nabla u|^2$ is nonnegative and may therefore be dropped from the left-hand side of the above inequality. Furthermore, by choosing $p$ sufficiently large, Young's inequality can be applied to obtain
\begin{equation}\label{ineq 4.80}
		\begin{aligned}
			\frac{k_3 k_4^{2} (p-1)}{2}\int_{\Omega}u^p |\nabla v|^2&\leq\left(\int_{\Omega}u^{\theta+p+1}\right)^{\frac{p}{\theta+p+1}}\left(\int_{\Omega}\left(\frac{k_3 k_4^{2} (p-1)}{2}|\nabla v|^2\right)^\frac{\theta+p+1}{\theta+1}\right)^\frac{\theta+1}{\theta+p+1}\\
			&\leq\frac{\mu}{8}\int_{\Omega}u^{\theta+p+1}+{C_2}\int_{\Omega}|\nabla v|^\frac{2(\theta+p+1)}{\theta+1},
		\end{aligned}
\end{equation}
where $\theta\geq1,p>1$ and ${C_2}=\left(\frac{k_3k^2_4(p-1)}{2}\right)^\frac{(\theta+p+1)}{\theta+1}\left(\frac{\theta+1}{\theta+p+1}\right)\left(\frac{8p}{\mu(\theta+p+1)}\right)^\frac{p}{\theta+1}$.
We now estimate the remaining terms in \eqref{estimate 4.79}. Since $p$ has been chosen sufficiently large, Young's inequality can be applied, yielding
\begin{equation}\label{ineq 4.81}
\begin{aligned}
\left(4q-4+\frac{n}{2}\right)\int_{\Omega}w^{2\beta}|\nabla v|^{2q-2}&\leq \left(\int_{\Omega}w^p\right)^\frac{2\beta}{p}\left(\left(4q-4+\frac{d}{2}\right)^\frac{p}{p-2\beta}|\nabla v|^\frac{(2q-2)p}{(p-2\beta)}\right)^\frac{p-2\beta}{p}\\
&\leq \frac{\delta p}{4}\int_{\Omega}w^p+{C_3}\int_{\Omega}|\nabla v|^\frac{p(2q-2)}{p-2\beta},
		\end{aligned}
\end{equation}
where ${C_3}=\left(4q-4+\frac{d}{2}\right)^\frac{p}{p-2\beta}\left(\frac{p-2\beta}{p}\right)\left(\frac{8\beta}{\delta p^2}\right)^\frac{2\beta}{p-2\beta}$.
Moreover, we have 
\begin{equation}\label{ineq 4.82}
\begin{aligned}
\left(\frac{2}{\delta}\right)^{p-1}\int_{\Omega}u^p\leq&\left(\int_{\Omega}u^{\theta+p+1}\right)^\frac{p}{\theta+p+1}\left(\int_{\Omega}\left(\left(\frac{2}{\delta}\right)^{p-1}\right)^\frac{\theta+p+1}{\theta+1}\right)^\frac{\theta+1}{\theta+p+1}\\
&\leq \frac{\mu}{8}\int_{\Omega}u^{\theta+p+1}+{C_4},
\end{aligned}
\end{equation}
where ${C_4}=|\Omega|\left(\frac{2}{\delta}\right)^{\frac{(p-1)(\theta+p+1)}{\theta+1}}\left(\frac{\theta+1}{\theta+p+1}\right)\left(\frac{8p}{\mu (\theta+p+1)}\right)^\frac{p}{\theta+1}$.
Now, collecting the relevant exponents, we define
\begin{equation*}
		\chi_1=\chi_1(p,q)=\frac{2(\theta+p+1)}{\theta+1},
		\ 
		\chi_2=\chi_2(p,q)=\frac{p(2q-2)}{p-2\beta}.
\end{equation*}
Combining \eqref{ineq 4.80}, \eqref{ineq 4.81}, and \eqref{ineq 4.82} with \eqref{estimate 4.79}, we obtain the desired estimate \eqref{lemma for second theorem proof}.
\end{proof}
Having established Lemma \ref{lemma 4.8}, we observe that controlling the integrals $\int_{\Omega}|\nabla v|^{\chi_i}$, $i=1,2$, is crucial for proving Theorem \ref{global bound H1-H2}. To this end, we establish the following result, which further provides the required control of $\int_{\Omega}|\nabla v|^{\chi_i}$, $i=1,2$, via the Gagliardo-Nirenberg inequality.
\begin{lemma}\label{cond on p and q}
	
If $\frac{\theta+2}{d}\leq\beta<\frac{d(\theta+1)+2(\theta+2)}{2d}$ and $s\in\left[1,\frac{d(\theta+2)}{\beta d-(\theta+2)}\right)$, then, for every sufficiently large $1<p<\infty$, there exists $1<q<\infty$ such that $\lambda_i\in(0,1)$ and $f_i<2$ for $i=1,2$, where $\chi_i$ $(i=1,2)$ is defined in \eqref{chi values} and
\begin{equation}\label{equation 4.84}
	\begin{aligned}
		\lambda_i=\lambda_i(p,q,s)&=\frac{\frac{q}{s}-\frac{q}{\chi_i}}{\frac{q}{s}+\frac{1}{d}-\frac{1}{2}}\in(0,1),\\
		f_i=f_i(p,q,s)&=\frac{\chi_i}{q}\lambda_i
		=\frac{\frac{\chi_i}{s}-1}{\frac{q}{s}+\frac{1}{d}-\frac{1}{2}}<2.
     \end{aligned}
\end{equation}
\end{lemma}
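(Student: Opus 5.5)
The plan is to treat everything as explicit linear inequalities in $q$ for fixed $s$ and large $p$, and to exhibit an admissible interval of $q$. Throughout, $s\in\bigl[1,\tfrac{d(\theta+2)}{\beta d-(\theta+2)}\bigr)$ is fixed and $q$ is taken large enough that the common denominator $D:=\frac{q}{s}+\frac1d-\frac12$ is positive. With $D>0$, each requirement in \eqref{equation 4.84} can be cleared of denominators. For $i=1,2$ this gives
\[
\lambda_i>0\iff \chi_i>s,\qquad
\lambda_i<1\iff q>\Bigl(\tfrac12-\tfrac1d\Bigr)\chi_i,\qquad
f_i<2\iff \chi_i<2q+\tfrac{2s}{d}.
\]
These equivalences need $d\ge2$, so that $\frac12-\frac1d\ge0$.

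\textbf{Conditions for $i=1$.} Here $\chi_1=\frac{2(\theta+p+1)}{\theta+1}$ does not depend on $q$ and grows linearly in $p$. Hence $\chi_1>s$ holds once $p$ is large. The condition $f_1<2$ reads
\[
q>\frac{\theta+p+1}{\theta+1}-\frac sd=:q_-(p).
\]
This lower bound dominates $(\frac12-\frac1d)\chi_1=\frac{\chi_1}{2}-\frac{\chi_1}{d}$ as soon as $\chi_1>s$. So for $i=1$ the only genuine constraint is $q>q_-(p)$.

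\textbf{Conditions for $i=2$.} Here $\chi_2=\frac{p(2q-2)}{p-2\beta}$, with $p>2\beta$. Substituting, $f_2<2$ becomes $(2q-2)p<(2q+\frac{2s}{d})(p-2\beta)$, that is,
\[
q<\frac{p(1+\frac sd)}{2\beta}-\frac sd=:q_+(p).
\]
Since $\chi_2=2q-2+O(q/p)$, the condition $\chi_2>s$ is automatic for $q$ large. The condition $q>(\frac12-\frac1d)\chi_2$ reduces to $\frac{q}{d}>\frac{(d-2)}{d}\cdot\frac{2\beta q-p}{p-2\beta}$, which holds when $q\lesssim p$ and $p$ is large. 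Every $q$ we choose will be comparable to $p$, so this is harmless.

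\textbf{Choosing $q$.} It remains to pick $q\in(q_-(p),q_+(p))$ with $q>1$ and $D>0$. Both endpoints are affine in $p$, so the interval is nonempty and unbounded in $p$ exactly when the slopes satisfy
\[
\frac{1}{\theta+1}<\frac{1+s/d}{2\beta},\qquad\text{i.e.}\qquad 2\beta<(\theta+1)\Bigl(1+\frac sd\Bigr).\tag{$\ast$}
\]
Granting $(\ast)$, I take $q=\frac12\bigl(q_-(p)+q_+(p)\bigr)$. This $q$ is of order $p$, so $q>1$ and $D>0$ hold for $p$ large, and all six conditions then follow. The lower restriction $\beta\ge\frac{\theta+2}{d}$ is used only to ensure $\beta d-(\theta+2)>0$, so that the $s$-range is a bounded interval coming from Lemma \ref{beta k condition} with $k=\theta+2$.

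\textbf{Main obstacle.} The hard step is verifying $(\ast)$ from the hypothesis $\beta<\frac{d(\theta+1)+2(\theta+2)}{2d}$, which reads $2\beta<(\theta+1)+\frac{2(\theta+2)}{d}$. Comparing with $(\ast)$, it suffices to have $\frac{(\theta+1)s}{d}\ge\frac{2(\theta+2)}{d}$, i.e. $s\ge\frac{2(\theta+2)}{\theta+1}$. At the top of the admissible range, $s\uparrow\frac{d(\theta+2)}{\beta d-(\theta+2)}$, and $(\ast)$ becomes $2\beta(\beta d-(\theta+2))<(\theta+1)\beta d$, which is equivalent to the hypothesis on $\beta$. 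So $(\ast)$ holds for $s$ near the endpoint.

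For small $s$, in particular $s=1$, $(\ast)$ asks $2\beta<(\theta+1)(1+\frac1d)$. This is stricter than the hypothesis whenever $\beta$ is close to its upper bound. I would first try to rescue these $s$ by exploiting the freedom in $q$ further, for instance by letting $q$ grow superlinearly in $p$. However, $f_2<2$ caps $q$ at $q_+(p)$, which is linear in $p$, and $f_1<2$ forces $q>q_-(p)$, so the slope comparison $(\ast)$ appears unavoidable.

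I therefore expect the argument to succeed for those $s$ satisfying $(\ast)$. For the remaining $s$ I would check carefully whether the statement needs the additional restriction $s\ge\frac{2(\theta+2)}{\theta+1}$, or at least $(\ast)$. If it does, the place where Lemma \ref{cond on p and q} is applied should take $s$ close to the endpoint, which is the only case used later in order to obtain $q>\frac d2$.
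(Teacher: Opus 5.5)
Your argument is correct and is essentially the paper's own. Both reduce the requirements to $s<\chi_i<2q+\frac{2s}{d}$ together with $q>(\frac12-\frac1d)\chi_i$, trap $q$ between the affine bounds $q_-(p)<q<q_+(p)$, and arrive at exactly your slope condition $(\ast)$. The hypothesis on $\beta$ delivers $(\ast)$ only for $s$ near $\frac{d(\theta+2)}{\beta d-(\theta+2)}$; the paper simply says ``by continuity, we can choose $s$ sufficiently close'' to that endpoint, and that is the only case used afterwards.

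There is one harmless slip in your $i=2$ reduction: the left-hand side should be $\frac{2q}{d}$, not $\frac{q}{d}$. The condition is then just $2q(p-\beta d)+(d-2)p>0$, which is automatic once $p>\beta d$.

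Your suspicion about small $s$ is justified. For large $p$ the denominator $\frac qs+\frac1d-\frac12$ must be positive, since otherwise $\lambda_1>0$ would force $\chi_1<s$. Hence $(\ast)$ is necessary as well as sufficient. The lemma as literally stated therefore fails, e.g.\ for $d=2$, $\theta=1$, $s=1$: there $(\ast)$ reads $2\beta<3$, while every admissible $\beta$ has $2\beta\ge 3$.
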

\begin{proof}
By \eqref{equation 4.84} with  $\lambda_i\in(0,1)$  and $f_i<2$ for $i=1,2$,  it suffices to show that
\begin{equation}\label{equiv to lambda and fi}
				2q+\frac{2s}{d}>\chi_i>s,\ \text{for all } i=1,2.
\end{equation}
\textbf{Step (i):}
Since $\lambda_i\in(0,1)$, it follows that
\begin{align*}
				\chi_i>s,\ \text{and} \ q>\frac{\chi_i}{2}-\frac{\chi_i}{d}.
\end{align*}
\textbf{Step (ii):}
Moreover, the condition $f_i<2$ yields
\begin{align*}
	\frac{\chi_i}{s}-1&<2\left(\frac{q}{s}+\frac{1}{d}-\frac{1}{2}\right)
	\implies 2q+\frac{2s}{d}>\chi_i.
\end{align*}
Combining the two steps above yields \eqref{equiv to lambda and fi}.  Consequently, any choice of $\chi_i$ satisfying \eqref{equation 4.84} also satisfies \eqref{equiv to lambda and fi}. It therefore remains to show that there exists some $1<q<\infty$ such that, for all sufficiently large $p$, the pair $(p,q)$ satisfies the conditions obtained from \eqref{equiv to lambda and fi}, namely,
		\begin{equation}\label{derive conditions}
			\begin{aligned}
				\chi_i>s \ \text{ and } \ \chi_i<2q+\frac{2s}{d}.
			\end{aligned}
		\end{equation}
Since $\chi_1=\frac{2(p+\theta+1)}{\theta+1}$ and $\chi_2=\frac{p(2q-2)}{p-2\beta}$, we first invoke the condition $\chi_i>s$, $i=1,2$, from \eqref{derive conditions}. It then follows that $p$ and $q$ must satisfy the following lower bounds:
\begin{equation}\label{lower bounds of p,q}
		 \begin{aligned}
		 	\chi_1>s
		 	&\Longrightarrow \frac{2(p+\theta+1)}{\theta+1}>s
		 	\Longrightarrow p>\frac{s(\theta+1)}{2}-(\theta+1),\\[2mm]
		 	\chi_2>s
		 	&\Longrightarrow \frac{p(2q-2)}{p-2\beta}>s
		 	\Longrightarrow q>\frac{s(p-2\beta)}{2p}+1.
		 \end{aligned}
\end{equation}
Now, similarly, by applying the second condition $\chi_i<2q+\frac{2s}{d}$, $i=1,2$ from \eqref{derive conditions}, we obtain
\begin{equation*}
	\begin{aligned}
		\chi_1<2q+\frac{2s}{d}
		&\Longleftrightarrow
		\frac{p+\theta+1}{\theta+1}-\frac{s}{d}<q,\\[2mm]
		\chi_2<2q+\frac{2s}{d}
		&\Longleftrightarrow
		\frac{p(2q-2)}{2(p-2\beta)}<q+\frac{s}{d}\Longleftrightarrow
		q<\frac{p}{2\beta}+\frac{s(p-2\beta)}{2\beta d}.
	\end{aligned}
\end{equation*}
This implies that
\begin{equation}\label{q upper lower bounds}
			\begin{aligned}
				\frac{p+\theta+1}{\theta+1}-\frac{s}{d}<q<\frac{p}{2\beta}+\frac{s(p-2\beta)}{2\beta d}.
			\end{aligned}
\end{equation}
It follows from \eqref{lower bounds of p,q} and \eqref{q upper lower bounds} that the inequality
$\frac{s(p-2\beta)}{2p}+1<\frac{p}{2\beta}+\frac{s(p-2\beta)}{2\beta d}$
holds for all sufficiently large $p>\max\{1,\beta d\}$. Furthermore, comparing the lower and upper bounds for $q$ in \eqref{q upper lower bounds}, we obtain the following additional condition on $p$, which ensures that $1<q<\infty$ as:
\begin{align*}
				&\frac{p+\theta+1}{\theta+1}-\frac{s}{d}<\frac{(p-2\beta)s}{2\beta d}+\frac{p}{2\beta},\\
			&\Longrightarrow 	\frac{p}{\theta+1}+1-\frac{s}{d}<\frac{ps}{2\beta d}-\frac{s}{d}+\frac{p}{2\beta},\\
				&\Longrightarrow 	p\left[\frac{1}{\theta+1}-\frac{s}{2\beta d}-\frac{1}{2\beta}\right]<-1.
		\end{align*}
Since
$
s\in\left[1,\frac{d(\theta+2)}{\beta d-(\theta+2)}\right),
$
by continuity, we can choose $s$ sufficiently close to
$\frac{d(\theta+2)}{\beta d-(\theta+2)}$. Substituting this choice of $s$
into the above inequality, we obtain
\begin{align*}
	&	p\left[\frac{1}{\theta+1}
		-\frac{d(\theta+2)}{2\beta d(\beta d-(\theta+2))}
		-\frac{1}{2\beta}\right]<-1,\\ 
	&\Longrightarrow 	p\left[\frac{d(\theta+1)-2\beta d+2(\theta+2)}
		{2(\theta+1)(\beta d-(\theta+2))}\right]>1,\\ 
	&\Longrightarrow 	p>\frac{2(\theta+1)(\beta d-(\theta+2))}
		{d(\theta+1)-2\beta d+2(\theta+2)}.
\end{align*}
As $1<p<\infty$, the positivity of the resulting lower bound requires $\beta\geq\frac{\theta+2}{d}$. Furthermore, for the lower bound to be finite, we require
$d(\theta+1)-2\beta d+2(\theta+2)>0$, which is equivalent to
$\beta<\frac{d(\theta+1)+2(\theta+2)}{2d}$. We are therefore in a position to summarize the resulting lower bounds for $p$ as follows:
\begin{equation*}
			\begin{aligned}
				p>1,\ p>\beta d, \ p>\frac{(\theta+1)s}{2}-(\theta+1),\ p>\frac{2(\theta+1)(\beta d-(\theta+2))}{d(\theta+1)-2\beta d+2(\theta+2)},
			\end{aligned}
\end{equation*}
accordingly, we fix a constant $p_0$ such that
\begin{equation}\label{p0}
			\begin{aligned}
				p_0=\max\left\{1, \beta d,\ \frac{d(\theta+1)(\theta+2)}{2(\beta d-(\theta+2))}-(\theta+1),\ \frac{2(\theta+1)(\beta d-(\theta+2))}{d(\theta+1)-2\beta d+2(\theta+2)}\right\}.
			\end{aligned}
\end{equation}
It is worth noting that the third term in the definition of $p_0$ depends on the choice of $s$. Since $s<\frac{d(\theta+2)}{\beta d-(\theta+2)}$, we may choose $s$ sufficiently close to its upper bound so that $p_0$ is well defined and $p$ can be taken sufficiently large.

Consequently, for every sufficiently large $p>p_0$, there exists $q>1$ satisfying both \eqref{lower bounds of p,q} and \eqref{q upper lower bounds}. This completes the proof. 
\end{proof}
Having established Lemma~\ref{cond on p and q}, we are now in a position to derive the \(L^p\)-estimate for \(u\), thereby completing this section.

\begin{lemma}\label{final lemma for theorem 2}
	Suppose that \eqref{Initial Cond}, \eqref{Hypothesis H1}, and \eqref{Hypothesis H2} hold. In addition, let
	\[
	\beta < \frac{d(\theta+1)+2(\theta+2)}{2d}.
	\]
	Then, for some sufficiently large $1<p<\infty$, there exists a constant $C>0$ such that every classical solution of \eqref{Main} fulfills
	\begin{align}\label{second Lp estimate of u}
		\|u(\cdot,t)\|_{L^p(\Omega)} \leq C,
	\end{align}
	\textit{for all $t \in (0,T_{\max})$.}
\end{lemma}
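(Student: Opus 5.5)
We will split according to the size of $\beta$. If $\beta<\frac{\theta+2}{d}$, then Lemma~\ref{beta k condition} with $k=\theta+2$ and Lemma~\ref{L1 estimate of w theta} give $\|\nabla v(\cdot,t)\|_{L^\infty(\Omega)}\le C$. Then Lemma~\ref{estimate 4.1}, with the right-hand side $\frac{k_3k_4^2(p-1)}{2}\int_\Omega u^p|\nabla v|^2\le C\int_\Omega u^p$ absorbed by Young's inequality into $\frac{3\mu}{4}\int_\Omega u^{\theta+p+1}$, yields $\frac{d}{dt}\int_\Omega u^p+\mu Ap\int_\Omega u^p\le C$. Lemma~\ref{ode comparison} then closes this case.

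The main case is $\frac{\theta+2}{d}\le\beta<\frac{d(\theta+1)+2(\theta+2)}{2d}$. We fix $s<\frac{d(\theta+2)}{\beta d-(\theta+2)}$ close to the endpoint. By Lemma~\ref{beta k condition} with $k=\theta+2$ and \eqref{w estimate}, we have $\sup_t\|\nabla v(\cdot,t)\|_{L^s(\Omega)}\le C$. Lemma~\ref{cond on p and q} then supplies $p>p_0$ and $q>1$ with $\lambda_i\in(0,1)$ and $f_i<2$.

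Set $y(t)=\frac1p\int_\Omega u^p+\frac1q\int_\Omega|\nabla v|^{2q}+\int_\Omega w^p$. Starting from Lemma~\ref{lemma 4.8}, we apply the Gagliardo--Nirenberg inequality (Lemma~\ref{GNI}) to $\varphi=|\nabla v|^q$ with $m=1$, exponents $2$ and $s/q$, and target exponent $\chi_i/q$. This gives
\[
\int_\Omega|\nabla v|^{\chi_i}=\bigl\||\nabla v|^q\bigr\|_{L^{\chi_i/q}}^{\chi_i/q}\le C\bigl\|\nabla|\nabla v|^q\bigr\|_{L^2}^{\frac{\chi_i}{q}\lambda_i}\bigl\||\nabla v|^q\bigr\|_{L^{s/q}}^{\frac{\chi_i}{q}(1-\lambda_i)}+C\le C\Bigl(\int_\Omega\bigl|\nabla|\nabla v|^q\bigr|^2\Bigr)^{f_i/2}+C .
\]
The interpolation exponent matches \eqref{equation 4.84}, since $\frac{q}{\chi_i}=\lambda_i(\frac12-\frac1d)+(1-\lambda_i)\frac{q}{s}$. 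If $s/q<1$, we use the variant with $L^{r}$, $r<1$, or simply raise $s$ by Hölder, which is harmless because $q$ is bounded. Since $f_i<2$, Young's inequality absorbs these terms into $\frac{q-1}{2q^2}\int_\Omega|\nabla|\nabla v|^q|^2$, leaving a constant. The remaining good terms $\mu A\int_\Omega u^p+2\int_\Omega|\nabla v|^{2q}+\frac{\delta p}{4}\int_\Omega w^p$ dominate $c\,y(t)$, so $y'+cy\le C$, and Lemma~\ref{ode comparison} gives $\int_\Omega u^p\le C$ uniformly on $(0,T_{\max})$.

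The main obstacle is the Gagliardo--Nirenberg step. It requires the precise exponent bookkeeping of Lemma~\ref{cond on p and q}, including $\chi_i\ge s$ so that $\lambda_i\ge0$, and compatibility of $s/q$ with Lemma~\ref{GNI}'s requirement $r\ge1$. If needed, we would handle the latter by interpolating $\|\,|\nabla v|^q\|_{L^{s/q}}$ through $L^1$ bounds obtained from the $L^s$ bound of $\nabla v$ for $q\le s$, and choosing $q\le s$ is compatible with \eqref{q upper lower bounds} for $s$ near its endpoint and $p$ moderately large.
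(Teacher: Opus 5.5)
Your proposal is correct and is essentially the paper's proof: the same split at $\beta=\frac{\theta+2}{d}$, the Gagliardo--Nirenberg interpolation of $|\nabla v|^q$ between $\|\nabla|\nabla v|^q\|_{L^2(\Omega)}$ and the uniform $L^s$ bound on $\nabla v$ from Lemma~\ref{beta k condition}, Young absorption via $f_i<2$ into the dissipative term of Lemma~\ref{lemma 4.8}, and Lemma~\ref{ode comparison} for $y$. You also close the case $\beta<\frac{\theta+2}{d}$ explicitly via Lemma~\ref{estimate 4.1}, which the paper leaves at $\|\nabla v\|_{L^\infty(\Omega)}<\infty$.

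One caution concerns your fallback remark on $s/q<1$. Since $q>\frac{p+\theta+1}{\theta+1}-\frac sd$ grows linearly in $p$, $q$ is not bounded, and Hölder cannot raise $s$ beyond the range given by Lemma~\ref{beta k condition}. Moreover, $q\le s$ is incompatible with $p>p_0$ when $\beta$ is close to $\frac{d(\theta+1)+2(\theta+2)}{2d}$: there $p_0\to\infty$, while $q\le s$ forces $p<(\theta+1)\bigl(s(1+\frac1d)-1\bigr)$, which stays bounded. So the right fix is the one you name first, the Gagliardo--Nirenberg inequality with $0<r<1$. The paper uses this tacitly as well, since it never checks $s/q\ge1$.
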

\begin{proof}
In view of Lemma~\ref{beta k condition} and the estimates for $\|\nabla v\|_{L^m(\Omega)}$ established in \eqref{grad v 1st cond}, we distinguish two cases. We first consider the case $\beta<\frac{\theta+2}{d}$, for which we obtain
\begin{align*}
			\|\nabla v(t)\|_{L^\infty(\Omega)} < \infty, \ \text{for all } t \in (0,T_{\max}). 
\end{align*}
For the remaining case, where $\frac{\theta+2}{d}\leq\beta<\frac{d(\theta+1)+2(\theta+2)}{2d}$, the Gagliardo-Nirenberg  inequality yields 
\begin{align}\label{grad v controll}
C \int_\Omega |\nabla v|^{\chi_i}
= C\||\nabla v|^q\|_{L^{\frac{\chi_i}{q}}(\Omega)}^\frac{\chi_i}{q}
&\leq {C_1}\left(\|\nabla(|\nabla v|^q)\|_{L^2(\Omega)}^{\lambda_i}
\||\nabla v|^q\|_{L^{\frac{s}{q}}(\Omega)}^{1-\lambda_i}
+ \||\nabla v|^q\|_{L^{\frac{s}{q}}(\Omega)}\right)^{\frac{\chi_i}{q}},\nonumber\\
&\leq {C_1}\left(\|\nabla(|\nabla v|^q)\|_{L^2(\Omega)}^{\frac{\lambda_i\chi_i}{q}}
\||\nabla v|^q\|_{L^{\frac{s}{q}}(\Omega)}^{\frac{(1-\lambda_i)\chi_i}{q}}
+ \||\nabla v|^q\|_{L^{\frac{s}{q}}(\Omega)}^{\frac{\chi_i}{q}}\right).	
\end{align}
for \(i=1,2\). Subsequently, applying Young's inequality to the first term on the right-hand side of the above inequality, we set
$X:=\|\nabla(|\nabla v|^q)\|_{L^2(\Omega)}^{f_i}$
and
$Y:=C_1\||\nabla v|^q\|_{L^{\frac{s}{q}}(\Omega)}^{\frac{(1-\lambda_i)\chi_i}{q}}$.
Then, Young's inequality yields
$XY\leq\varepsilon X^{p_1}+C(\varepsilon)Y^{p_2}$,
where
$p_1=\frac{2}{f_i}>1$,
$p_2=\frac{2}{2-f_i}>1$,
and
$\varepsilon=\frac{q-1}{4q^2}$.
Since
$f_i=\frac{\lambda_i\chi_i}{q}<2$
by Lemma~\ref{cond on p and q}, both conjugate exponents $p_1$ and $p_2$ are well defined and strictly greater than one. It follows that
\begin{align}\label{grad v control_1}
	\|\nabla(|\nabla v|^q)\|_{L^2(\Omega)}^{\frac{\lambda_i\chi_i}{q}}
	\||\nabla v|^q\|_{L^{\frac{s}{q}}(\Omega)}^{\frac{(1-\lambda_i)\chi_i}{q}} \leq \frac{q-1}{4q^2}\|\nabla(|\nabla v|^q)\|_{L^2(\Omega)}^2
	+ C(\epsilon)\left({C_1}\||\nabla v|^q\|_{L^{\frac{s}{q}}(\Omega)}^{\frac{(1-\lambda_i)\chi_i}{q}}\right)^{\frac{2}{2 - f_i}},
\end{align}
where
\begin{align*}
	C(\epsilon) = \frac{2-f_i}{2}\left[\frac{4q^2 f_i}{2(q-1)}\right]^{\frac{f_i}{2-f_i}}.
\end{align*}
In addition, using Lemma \ref{beta k condition} together with the choice of \(s\) satisfying \(s\in\left[1,\frac{d(\theta+2)}{\beta d-(\theta+2)}\right)\) as specified in Lemma~\ref{cond on p and q}, we have
\begin{align}\label{grad v control_2}
	\||\nabla v|^q\|_{L^{\frac{s}{q}}(\Omega)}
	= \|\nabla v\|_{L^s(\Omega)}^q\leq C.
\end{align}
Consequently, from \eqref{grad v controll}, \eqref{grad v control_1} and \eqref{grad v control_2}, we deduce
	\begin{align}\label{chi power of grad v}
			C \int_\Omega |\nabla v|^{\chi_i} \leq \frac{q-1}{4q^2}\|\nabla(|\nabla v|^q)\|_{L^2(\Omega)}^2 + {C_2}.
	\end{align}
Under the assumption $\beta<\frac{d(\theta+1)+2(\theta+2)}{2d}$, substituting  \eqref{chi power of grad v} into \eqref{lemma for second theorem proof}, we may choose a positive constant ${C_3}$ such that
\begin{align*}
	\frac{d}{dt}\left(\frac{1}{p}\int_\Omega u^p + \frac{1}{q}\int_\Omega |\nabla v|^{2q} + \int_\Omega w^p \right)
	&+ \mu A\int_\Omega u^p + 2\int_\Omega |\nabla v|^{2q} + \frac{\delta p}{4}\int_\Omega w^p+ \frac{\mu}{2}\int_{\Omega}u^{\theta+p+1}\\
	&+\frac{q-1}{2q^2}\int_{\Omega}|\nabla|\nabla v|^q|^2\leq {C_3},
\end{align*}
for all $t \in (0,T_{\max})$. Further, we have
\begin{align}\label{derivate of u,grad v, w}
			\frac{d}{dt}\left(\frac{1}{p}\int_\Omega u^p + \frac{1}{q}\int_\Omega |\nabla v|^{2q} + \int_\Omega w^p \right)
		+ \mu A\int_\Omega u^p + 2\int_\Omega |\nabla v|^{2q} + \frac{\delta p}{4}\int_\Omega w^p
		\leq {C_3}, 
\end{align}
for all $t \in (0,T_{\max}).$ Let
	\[
	Z(t) := \frac{1}{p}\int_\Omega u^p + \frac{1}{q}\int_\Omega |\nabla v|^{2q} + \int_\Omega w^p.
	\]
Together with \eqref{derivate of u,grad v, w}, this yields
	\[
	Z'(t) + \min\left\{\mu Ap, 2q, \frac{\delta p}{4}\right\} Z(t) \leq {C_3},
	\]
for all \(t\in(0,T_{\max})\). Consequently, the estimate \eqref{second Lp estimate of u} follows from Lemma~\ref{ode comparison}. 
\end{proof}

Before proving Theorem~\ref{global bound H1-H2}, we compare the restrictions on $\beta$ arising from the preceding results. Theorem~\ref{global bound_H1} guarantees the global existence of solutions to \eqref{Main} under the condition $\beta<\frac{2(\theta+2)}{d}$, whereas Lemma~\ref{final lemma for theorem 2} requires $\beta<\frac{d(\theta+1)+2(\theta+2)}{2d}$. Comparing these two upper bounds, we observe that whenever $\theta>\frac{4-d}{d-2}$, one has $\frac{2(\theta+2)}{d}<\frac{d(\theta+1)+2(\theta+2)}{2d}$. Hence, in this parameter regime, the restriction imposed by Theorem~\ref{global bound_H1} is more restrictive than that of Lemma~\ref{final lemma for theorem 2}.

\begin{proof}[Proof of Theorem \ref{global bound H1-H2}] 
To obtain a positive lower bound for $\gamma$, we first establish an $L^\infty$-estimate for $v$, as no a priori positive lower bound for $\gamma$ is available. By Lemma \ref{final lemma for theorem 2}, we have
$
\|\nabla v\|_{L^{2q}(\Omega)}\leq C(p)
$
provided that
$
\beta<\frac{d(\theta+1)+2(\theta+2)}{2d}.
$
It therefore suffices  choose to a $q$ such that $q>\frac{d}{2}$. Indeed, once this condition is satisfied, the Sobolev embedding, together with the above estimate, yields
$
\|v\|_{L^\infty(\Omega)}\leq C(p).
$
This uniform bound for $v$ then allows us to derive the desired positive lower bound for $\gamma$. By the admissibility conditions for \(q\) in Lemma \ref{cond on p and q}, we can choose
\[
p_0=
\max\left\{
1,\,
\beta d,\,
\frac{d(\theta+1)(\theta+2)}
{2(\beta d-(\theta+2))}
-(\theta+1),\,
\frac{2(\theta+1)(\beta d-(\theta+2))}
{d(\theta+1)-2\beta d+2(\theta+2)}
\right\}.
\]
Then, for every \(p>p_0\), there exists an admissible exponent \(q\in(1,\infty)\) satisfying, in particular,
\begin{align}\label{lower bounds of q}
	q>\max\left\{
	\frac{p+\theta+1}{\theta+1}-\frac{s}{d},
	\frac{s(p-2\beta)}{2p}+1
	\right\}.
\end{align}
We shall show that, under the additional restriction on \(\beta\), these lower bounds allow us to choose \(q>\frac{d}{2}\), which is the key condition needed for the subsequent \(L^\infty\)-estimate of \(v\).
Now, firstly we want to ensure that $q>\frac d2$ and it is sufficient to require that the lower bound in \eqref{lower bounds of q} itself is larger than
 $q>\frac d2$. Thus we impose
\begin{equation*}
			\frac{p+\theta+1}{\theta+1}-\frac{s}{d}
			>\frac d2.
\end{equation*}
Since \(\theta+1>0\), this is equivalent to
\begin{align*}
			p+\theta+1
			&>
			(\theta+1)\left(\frac d2+\frac{s}{d}\right),
\end{align*}
and hence
\begin{align*}
			p>
			(\theta+1)
			\left(
			\frac d2+\frac{s}{d}-1
			\right).
\end{align*}
Therefore, define
\begin{align}\label{p* define}
		p_*=\max\left\{
				p_0,\,
				(\theta+1)
				\left(
				\frac d2+\frac{s}{d}-1
				\right)
				\right\}.
\end{align}
Thus, for every \(p>p_*\), the lower bound in \eqref{lower bounds of q} ensures that \(q>\frac{d}{2}\). It is worth noting that no additional restriction on the upper bound of \(q\) is required in deriving the second term in the definition of \(p_*\). We retain the original value of \(p_0\) in \eqref{p* define} to ensure that the admissible interval for \(q\) is nonempty and the corresponding conditions are compatible.

Next, we fix an exponent \(P>p_*\). This fixed exponent will be distinguished from the variable exponent \(p\) used later in the Moser iteration argument to obtain an \(L^\infty\)-bound for \(u\) on \((0,T_{\max})\). For this choice of \(P\), we select and fix an admissible exponent \(q=q(P)>\frac{d}{2}\). Henceforth, both \(P\) and \(q\) are fixed and independent of the variable exponent \(p\) appearing in the subsequent Moser iteration. Applying estimate \eqref{derivate of u,grad v, w} with \(p=P\), we obtain
\begin{align*}
			\frac{d}{dt}
			\left(
			\frac1P\int_\Omega u^P
			+\frac1q\int_\Omega |\nabla v|^{2q}
			+\int_\Omega w^P
			\right)
			&+\mu A\int_\Omega u^P
			\nonumber
			+2\int_\Omega |\nabla v|^{2q}
			+\frac{\delta P}{4}\int_\Omega w^P
			\leq C_3(P).
\end{align*}
Let us define 
		\begin{equation*}
			Z_P(t)
			:=
			\frac1P\int_\Omega u^P
			+\frac1q\int_\Omega |\nabla v|^{2q}
			+\int_\Omega w^P. 
		\end{equation*}
Then, we have 
\begin{equation*}
			Z_P'(t)
			+
			\min\left\{
			\mu AP,\,
			2q,\,
			\frac{\delta P}{4}
			\right\}
			Z_P(t)
			\leq C_3(P).
\end{equation*}
By using Lemma \ref{ode comparison}, we obtain
		\[
		Z_P(t)\leq C(P),
		\ \text{ for all }\ t\in(0,T_{\max}).
		\]
In particular, we infer
		\begin{align}\label{grad 2q norm}
			\int_\Omega |\nabla v(\cdot,t)|^{2q}\leq C(P), \ \text{ for all }\ t\in(0,T_{\max}).
		\end{align}
Since $q>\frac d2$, we have $2q>d$. Hence, together with the Sobolev embedding
$
W^{1,2q}(\Omega)\hookrightarrow L^\infty(\Omega)
$
gives
\begin{equation}\label{Linfty of v 2}
	\|v(\cdot,t)\|_{L^\infty(\Omega)}
	\leq C(P).
\end{equation}
Therefore, since \(\gamma>0\) and \(\gamma'<0\), together with \eqref{Linfty of v 2}, we obtain
$
\gamma(v(\cdot,t))
\geq
\gamma(C(P))
=:\gamma_*>0.
$
Here, \(\gamma_*\) may depend on the fixed exponent \(P\), but it is independent of the subsequent variable exponent \(p\) used in the Moser iteration. Furthermore, arguing similarly to Lemma \ref{Linfty bound of u} and using \eqref{grad 2q norm}, we obtain the following estimate of the same form as \eqref{4.32}:
\begin{align*}
	\int_\Omega u^p|\nabla v|^2
	&\leq
	\left(\int_\Omega |\nabla v|^{2q}\right)^{1/q}
	\left(
	\int_\Omega u^{\frac{pq}{q-1}}
	\right)^{\frac{q-1}{q}}
	\nonumber\\
	&=
	\|\nabla v\|_{L^{2q}}^2
	\left\|u^{p/2}\right\|_{L^{\frac{2q}{q-1}}}^2
	\nonumber\\
	&\leq
	C(P)^2
	\left\|u^{p/2}\right\|_{L^{\frac{2q}{q-1}}}^2.
\end{align*}
Further by using, the  Young and Gagliardo-Nirenberg inequalities, we obtain
\begin{align}\label{GN estimate 2}
	Cp^2\int_\Omega u^p|\nabla v|^2
	\leq
	\varepsilon
	\|\nabla u^{p/2}\|_{L^2}^2
	+
	C_\varepsilon
	p^{\frac{2q(d+2)}{2q-d}}
	\|u^{p/2}\|_{L^1}^2.
\end{align}
Consequently, estimate \eqref{GN estimate 2} holds for every subsequent choice of \(p>1\). 
Thus, the exponent \(P\) is fixed solely to obtain the required regularity of \(v\), whereas \(p\) remains free in all subsequent estimates involved in the Moser iteration for \(u\). It follows that the $L^\infty$-boundedness of $u$ can be established by applying an analogous argument to the one presented in Lemma \ref{Linfty bound of u}. On the other hand, Lemma \ref{final lemma for theorem 2} ensures that, provided
$
\beta < \frac{d(\theta+1)+2(\theta+2)}{2d},
$
one can find a constant $C>0$ such that
\begin{align*}
	\|u(\cdot,t)\|_{L^2(\Omega)}\leq C,
	\ \text{ for all } \ t\in(0,T_{\max}).
\end{align*}
Consequently, by applying the Alikakos-Moser iteration procedure analogously to the proof of Lemma \ref{Linfty bound of u}, we obtain a uniform \(L^\infty\)-bound for \(u\), which completes the proof of Theorem \ref{global bound H1-H2}.
\end{proof}
\section{Conclusion and future directions}
	In this work, we investigated the global existence and uniqueness of globally bounded classical solutions to system \eqref{Main}. We first established the local existence and uniqueness of classical solutions in Section \ref{sec:local} by applying the Schauder fixed point theorem together with suitable \emph{a priori} estimates, after replacing the motility function with an appropriate cutoff function. Subsequently, in Section \ref{sec:global1}, assuming only condition \eqref{Hypothesis H1} on the motility function, we proved the global existence of classical solutions under the condition $\beta < \frac{2(\theta+2)}{d}$. Furthermore, by additionally imposing the condition \eqref{Hypothesis H2}, we significantly enlarged the admissible range of the parameter $\beta$ and established global existence under the condition $\beta<\frac{d(\theta+1)+2(\theta+2)}{2d}$ whenever $\theta>\frac{4-d}{d-2}$.
	
  Although the present work establishes the global existence and uniqueness of classical solutions, several challenging questions remain open. A natural continuation of this work is to investigate the large-time behavior of solutions through the construction of an appropriate Lyapunov functional for sufficiently large values of $\mu$, following the approach developed in \cite{SIAM,JAMAA25}, which may yield further insight into the asymptotic dynamics of the system. Independently, it would also be worthwhile to examine the existence and stability of nonconstant steady-state solutions, as investigated in \cite{IMAA24} for the case $\theta=1$. Owing to the strong coupling effects and nonlinearities inherent in the system, both directions are expected to require considerably more delicate analytical arguments than those developed in the present work.
	
    \noindent	{\bf  Declarations:} 
	
	\noindent 	{\bf  Ethical Approval:}   Not applicable. 
	
	\noindent  {\bf   Competing interests: } The author declare no competing interests.

	\noindent  {\bf   Author contributions: } Om Tripathi conceived the study, developed the conceptual framework, carried out the mathematical analysis, and prepared the initial and subsequent drafts of the manuscript. Sourav K. Sasmal contributed to the literature review, the formulation of the mathematical model, and the writing and revision of the manuscript. Manil T. Mohan supervised the research, provided guidance in the development of the main ideas and mathematical analysis, and critically reviewed and revised the manuscript. All authors contributed to the interpretation of the results, reviewed the manuscript, and approved the final version for publication.


	\noindent 	{\bf   Availability of data and materials: } Not applicable. 
	
	\noindent 	{\bf   Funding declaration: }  We thank Prof. Michel Winkler, Department of Mathematics, Paderborn University, for useful discussions concerning Lemma 2.7.
	The first author would like to thank Ministry of Education of India for financial assistance. Support for S. K. Sasmal's research recieved from the IIT Roorkee, Faculty Initiation Grant (IITR/SRIC/2165/FIG-400164). Support for M. T. Mohan's research received from the National Board of Higher Mathematics (NBHM), Department of Atomic Energy, Government of India (Project No. 02011/13/2025/NBHM(R.P)/R\&D II/1137).

	\bibliographystyle{plain}
	\bibliography{References} 
\end{document}